\numberwithin{equation}{section}
\newcommand{\D}{\mathbb{D}}
\newcommand{\E}{\mathcal{E}}
\newcommand{\N}{\mathbb{N}}
\newcommand{\NN}{\mathcal{N}}
\newcommand{\R}{\mathbb{R}}
\renewcommand{\S}{\mathbb{S}}
\newcommand{\U}{\mathcal{U}}
\newcommand{\loc}{{\rm loc}}
\newcommand{\Mat}{{\mbox{\normalfont Mat}}}
\newcommand{\PV}{\mbox{\normalfont P.V.}}
\renewcommand{\epsilon} {\varepsilon}
\def\XXint#1#2#3{{\setbox0=\hbox{$#1{#2#3}{\int}$ }
\vcenter{\hbox{$#2#3$ }}\kern-.6\wd0}}
\theoremstyle{plain}
\newtheorem{definition}{Definition}[section]
\newtheorem{theorem}[definition]{Theorem}
\newtheorem{proposition}[definition]{Proposition}
\newtheorem{lemma}[definition]{Lemma}
\newtheorem{corollary}[definition]{Corollary}
\theoremstyle{definition}
\newtheorem{remark}[definition]{Remark}
\newtheorem{example}[definition]{Example}
\renewcommand{\le}{\leqslant}
\renewcommand{\leq}{\leqslant}
\renewcommand{\ge}{\geqslant}
\renewcommand{\geq}{\geqslant}
\newcommand{\e}{\varepsilon}
\title[Non-uniqueness for the nonlocal Liouville equation in $\mathbb{R}$ and applications]{Non-uniqueness for the nonlocal Liouville \\ equation in $\mathbb{R}$ and applications \\ $\, $}
\author[L. Battaglia, M. Cozzi, A. J. Fern\'andez and A. Pistoia]{Luca Battaglia, Matteo Cozzi, Antonio J. Fern\'andez and Angela Pistoia}
\address{
\vspace{0.25cm}
\newline
\textbf{{\small Luca Battaglia}}
\vspace{0.1cm}
\newline \indent Dipartimento di Matematica e Fisica, Universit\`a degli Studi Roma Tre, Largo S. Leonardo Murialdo 1, 00146 Roma (Italy)}
\email{lbattaglia@mat.uniroma3.it}
\address{
\vspace{-0.25cm}
\newline
\textbf{{\small Matteo Cozzi}}
\vspace{0.1cm}
\newline \indent Dipartimento di Matematica ``Federigo Enriques'', Universit\`a degli Studi di Milano, Via Cesare Saldini 50, 20133 Milano (Italy)}
\email{matteo.cozzi@unimi.it}
\address{
\vspace{-0.25cm}
\newline
\textbf{{\small Antonio J. Fern\'andez}} 
\vspace{0.1cm}
\newline \indent Instituto de Ciencias Matem\'aticas, Consejo Superior de Investigaciones Cient\'ificas, Calle Nicolas C\'abrera 13, 28049 Madrid (Spain) 
\newline \indent Departamento de Matem\'aticas, Universidad Aut\'onoma de Madrid, Ciudad Universitaria de Cantoblanco, 28049 Madrid (Spain)}
\email{antonioj.fernandez@uam.es}
\address{
\vspace{-0.25cm}
\newline
\textbf{{\small Angela Pistoia}}
\vspace{0.1cm}
\newline \indent Dipartimento di Scienze di Base e Applicate per l'Ingegneria, Sapienza Universit\`a di Roma, Via Antonio Scarpa 10, 00161 Roma (Italy)}
\email{angela.pistoia@uniroma1.it}
\begin{document}

\nocite{*}

\maketitle

\vspace{-0.3cm}
\begin{abstract}
We construct multiple solutions to the nonlocal Liouville equation
\begin{equation}
\label{eqk} \tag{L}
(-\Delta)^{\frac{1}{2}} u = K(x) e^u \quad \mbox{ in } \R.
\end{equation}
More precisely, for $K$ of the form $K(x) = 1+\e \kappa(x)$ with $\e \in (0,1)$ small and $\kappa \in C^{1,\alpha}(\R) \cap L^{\infty}(\R)$ for some $\alpha > 0$, we prove existence of multiple solutions to \eqref{eqk} bifurcating from the \textit{bubbles}. These solutions provide examples of flat metrics in the half-plane with prescribed geodesic curvature $K(x)$ on its boundary. Furthermore, they imply the existence of multiple ground state soliton solutions for the Calogero-Moser derivative NLS.

\bigbreak
\noindent {\sc Keywords:} Liouville type equation, Half-Laplacian, Multiplicity results, Lyapunov-Schmidt reduction, Brouwer degree.

\medbreak
\noindent {\sc 2020 MSC:} primary 35R11, 35A02; secondary 35C08, 30F45. 
\medbreak
\end{abstract}

\vspace{0.2cm}
\section{Introduction and main results}
\noindent In the present paper we construct multiple solutions to the nonlocal Liouville equation
\begin{equation} \label{maineq}
(-\Delta)^{\frac{1}{2}} u = (1 + \varepsilon \kappa(x)) e^u \quad \mbox{in } \R.
\end{equation}
Here, $\kappa \in C^{1,\alpha}(\R) \cap L^{\infty}(\R)$ for some $\alpha> 0$, $\varepsilon \in (0, 1)$ is a small parameter and $(-\Delta)^{\frac12}$ is the half-Laplace operator, defined for a bounded and sufficiently smooth function $u$ as 
$$
(-\Delta)^{\frac{1}{2}} u(x) := \frac{1}{\pi} \, \PV \int_\R \frac{u(x) - u(y)}{(x - y)^2} \, dy = \frac{1}{\pi} \, \lim_{\delta \rightarrow 0^+} \int_{\R \setminus (x - \delta, x + \delta)} \frac{u(x) - u(y)}{(x - y)^2} \, dy.
$$

The motivation for considering a problem of the form \eqref{maineq} is twofold. Firstly, this problem naturally arises in Riemannian Geometry. Indeed, if we consider the upper half-plane $\R^2_{+}:=\{(x,y) \in \R^2 : y > 0\}$ equipped with the standard Euclidean metric $g_0$ and assume that $u$ is a solution to \eqref{maineq}, the harmonic extension $U$ of $u$ to $\R_{+}^2$ provides a flat conformal metric $g_1 = e^{2U} g_0$ with prescribed geodesic curvature $1+\e\kappa(x)$ on $\R \equiv \partial \R^{2}_{+}$. We refer to the introductions of \cite{BMP21,BMP20,R21,CF22} and the references therein for more details on the geometric context. More recently, problem \eqref{maineq} was linked to the analysis of soliton solutions to the Calogero-Moser derivative NLS
\begin{equation} \tag{CM} \label{CM}
{\rm{i}} \partial_t \psi = - \partial_{xx} \psi + V\psi - (-\Delta)^{\frac12} (|\psi|^2) \psi + \frac14 |\psi|^4 \psi, \quad \textup{ in } [0,T) \times \R.
\end{equation}
Here, $V$ is an external given potential and $\psi$ is complex valued. Equation \eqref{CM} with $V(x) = x^2$ and $V(x) \equiv 0$ was introduced in the physics literature as a formal continuum limit of classical Calogero-Moser systems, see \cite{Physics1,Physics2}. From a rigorous point of view, \eqref{CM} has been very recently analyzed in \cite{GL22}. We refer as well to \cite{AL22} for recent results concerning both \eqref{eqk} and \eqref{CM}. In particular, assuming that $K$ is positive, symmetric-decreasing and satisfies suitable regularity and decay assumptions, the authors prove existence and uniqueness of solution to \eqref{eqk}, as well as some qualitative properties of the solution. In this paper, we intend to establish a sort of counterpart of this, namely that if $K$ changes monotonicity one may have multiple solutions.


For~$\mu > 0$ and~$\xi \in \R$, let us introduce
$$
\U_{\mu, \xi}(x) := \log \left( \frac{2 \mu}{\mu^2 + (x - \xi)^2} \right),
$$
and stress that the functions~$\{ \U_{\mu, \xi} \}_{\mu, \xi}$ are the unique solutions to
\begin{equation} \label{eqforU}
(-\Delta)^{\frac{1}{2}} U = e^U \quad \mbox{in } \R, \qquad \textup{ satisfying} \quad \int_{\R} e^U dx < +\infty.
\end{equation}
We refer to \cite[Theorem 1.8]{DMR15} for a proof (see also \cite{DM17}). Note also that, in \cite[Lemma 3.1]{GL22}, one can find an alternative proof that uses the link between \eqref{eqforU} and \eqref{CM} with $V \equiv 0$. This family of solutions are the so-called \textit{bubbles}. 

Accordingly, we will look for solutions to \eqref{maineq} of the form
\begin{equation} \label{u=U+phi}
u = \U_{\mu, \xi} + \phi,
\end{equation}
with $\phi: \R \to \R$ to be determined and small in a suitable sense.

As we shall see, the existence of such solutions is bound to the properties of the harmonic extension~$\Gamma$ of the function~$\kappa$ to the upper half-plane~$\R^2_+ = \big\{ {(\xi, \mu) : \xi \in \R, \, \mu > 0} \big\}$. Define
\begin{equation} \label{Gamma0def}
\Gamma(\xi, \mu) := \frac{1}{\pi} \int_\R \frac{\kappa(\xi + \mu y)}{1 + y^2} \, dy \quad \mbox{for } \xi \in \R,\ \mu > 0.
\end{equation}
It is well-known that $\Gamma$ is the unique bounded harmonic function in~$\R^2_+$ which agrees with~$\kappa$ on~$\partial \R^2_+ = \big\{ {(\xi, 0) : \xi \in \R} \big\}$---see, e.g.,~\cite{CS07}. Let us also stress that the function~$\Gamma$ is smooth inside~$\R^2_+$. Moreover, we will say that~$(\xi_\star, \mu_\star) \in \R^2_+$ is a non-degenerate critical point for~$\Gamma$ if
$$
\nabla \Gamma(\xi_\star, \mu_\star) = 0 \quad \mbox{and} \quad \det D^2 \Gamma(\xi_\star, \mu_\star) \ne 0.
$$

\noindent Having at hand $\Gamma$, our first main result reads as follows.

\begin{theorem} \label{mainthm}
Let~$(\xi_\star, \mu_\star) \in \R^2_+$ be a non-degenerate critical point for~$\Gamma$. Then, there exist two constants~$\varepsilon_0 \in (0, 1)$ and~$C > 0$ such that the following holds true. For every~$\varepsilon \in (0, \varepsilon_0]$, there exists a solution~$u = u_\varepsilon$ to \eqref{maineq} in the form~\eqref{u=U+phi}, for some~$\xi = \xi_\varepsilon \in \R$ and~$\mu = \mu_\varepsilon \in (0, +\infty)$ such that~$(\xi_\varepsilon, \mu_\varepsilon) \to (\xi_\star, \mu_\star)$ as~$\varepsilon \rightarrow 0^+$ and with~$\phi = \phi_\varepsilon \in L^\infty(\R)$ satisfying~$\| \phi_\varepsilon \|_{L^\infty(\R)} \le C \varepsilon$.
\end{theorem}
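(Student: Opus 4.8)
The plan is to implement a Lyapunov-Schmidt reduction centered on the bubbles $\U_{\mu,\xi}$. First I would set up the functional-analytic framework: writing $u = \U_{\mu,\xi} + \phi$ and plugging into \eqref{maineq}, one obtains an equation of the form $L_{\mu,\xi}\phi = E_{\mu,\xi} + N_{\mu,\xi}(\phi)$, where $L_{\mu,\xi}\phi := (-\Delta)^{1/2}\phi - e^{\U_{\mu,\xi}}\phi$ is the linearized operator at the bubble, $E_{\mu,\xi} := (1+\e\kappa)e^{\U_{\mu,\xi}} - (-\Delta)^{1/2}\U_{\mu,\xi} = \e\,\kappa\, e^{\U_{\mu,\xi}}$ is the error term, and $N_{\mu,\xi}(\phi) := (1+\e\kappa)e^{\U_{\mu,\xi}}(e^\phi - 1 - \phi)$ collects the nonlinear remainder. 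The error is small: in the right weighted norm $E_{\mu,\xi} = O(\e)$, since $\kappa$ is bounded and $e^{\U_{\mu,\xi}}$ is integrable. The key structural input is the explicit description of the kernel of $L_{\mu,\xi}$: by the non-degeneracy of the bubbles (which should follow from, or be recalled alongside, the uniqueness statement \eqref{eqforU}; see \cite{DMR15, DM17}), the bounded kernel of $L_{\mu,\xi}$ is two-dimensional, spanned by $Z^0_{\mu,\xi} := \partial_\mu \U_{\mu,\xi}$ and $Z^1_{\mu,\xi} := \partial_\xi \U_{\mu,\xi}$, arising from the scaling and translation invariances of \eqref{eqforU}.

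Next I would solve the projected problem. Fix the orthogonality conditions $\int_\R e^{\U_{\mu,\xi}} Z^j_{\mu,\xi}\,\phi\,dx = 0$ for $j=0,1$, and seek $\phi$ solving $L_{\mu,\xi}\phi = E_{\mu,\xi} + N_{\mu,\xi}(\phi) + \sum_{j=0}^1 c_j\, e^{\U_{\mu,\xi}} Z^j_{\mu,\xi}$ for Lagrange multipliers $c_j = c_j(\mu,\xi,\e)$. The heart of this step is an a priori estimate: $L_{\mu,\xi}$ restricted to the subspace orthogonal to the kernel is invertible with norm bounded uniformly in $(\mu,\xi)$ ranging in a neighborhood of $(\mu_\star,\xi_\star)$ — this is where one pays the most technical attention, and it is typically proved by a contradiction/blow-up argument exploiting the classification of the kernel together with a nonlocal maximum-principle or regularity estimate to rule out escape of mass. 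Granting this, a contraction mapping argument in a ball of radius $O(\e)$ in $L^\infty(\R)$ (or a suitable weighted space capturing the decay $|x|^{-2}$ of the bubble) produces a unique small solution $\phi = \phi_{\mu,\xi,\e}$ with $\|\phi_{\mu,\xi,\e}\|_{L^\infty} \le C\e$, depending smoothly (or at least $C^1$) on $(\mu,\xi)$, since $N_{\mu,\xi}$ is quadratic near $0$ and $\e\kappa$ is a lower-order perturbation.

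Finally I would solve the reduced (bifurcation) equation $c_j(\mu,\xi,\e) = 0$ for $j=0,1$. The standard variational interpretation identifies the reduced problem with finding critical points of the energy $J_\e(\U_{\mu,\xi}+\phi_{\mu,\xi,\e})$ as a function of the two parameters $(\mu,\xi)$; expanding this reduced energy one finds $J_\e(\U_{\mu,\xi}+\phi_{\mu,\xi,\e}) = c_\star + \e\, \Gamma(\xi,\mu) + o(\e)$ uniformly in a neighborhood of $(\xi_\star,\mu_\star)$, with the $C^1$ expansion holding as well — here the appearance of $\Gamma$ from \eqref{Gamma0def} is exactly the computation $\int_\R \kappa\, e^{\U_{\mu,\xi}}\,dx = 2\pi^{-1}\!\int_\R \kappa(\xi+\mu y)(1+y^2)^{-1}dy \cdot (\text{const})$, matching the Poisson kernel of the half-plane. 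Since $(\xi_\star,\mu_\star)$ is a non-degenerate critical point of $\Gamma$, its topological degree (equivalently, the Brouwer degree of $\nabla\Gamma$ on a small ball) is nonzero and stable under the $o(\e)$ perturbation; hence $\nabla_{(\xi,\mu)} J_\e$ has a zero $(\xi_\e,\mu_\e) \to (\xi_\star,\mu_\star)$, which yields the desired solution $u_\e = \U_{\mu_\e,\xi_\e} + \phi_{\mu_\e,\xi_\e,\e}$.

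The main obstacle I anticipate is the uniform invertibility of the linearized operator $L_{\mu,\xi}$ modulo its kernel, together with controlling the $C^1$-dependence of $\phi$ and of the reduced energy on the concentration parameters in the nonlocal setting: the fractional operator lacks the locality that makes cut-off and comparison arguments straightforward, so one must work with appropriate weighted Hölder or $L^\infty$ spaces adapted to the $|x|^{-2}$ tail of $e^{\U_{\mu,\xi}}$ and use the Caffarelli-Silvestre extension or nonlocal regularity theory (cf. \cite{CS07, DMR15}) to close the estimates.
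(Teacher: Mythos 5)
Your plan follows the paper's strategy for the first three steps almost exactly: the same ansatz $u=\U_{\mu,\xi}+\phi$, the same linearized operator with kernel spanned by $\partial_\mu\U_{\mu,\xi}$ and $\partial_\xi\U_{\mu,\xi}$, the same linear theory proved by contradiction/compactness together with a barrier and a nonlocal maximum principle outside a large interval, and the same contraction in a ball of radius $O(\e)$ for the projected nonlinear problem. The minor difference there is that the paper projects against compactly supported multiples $\chi_\xi Z_{i,\mu,\xi}$ of the kernel elements rather than $e^{\U_{\mu,\xi}}Z_{i,\mu,\xi}$; the cutoff makes the duality pairings and the testing against the auxiliary functions of Lemma~\ref{2funclem} painless, but your choice is also standard.

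Where you genuinely diverge is in the reduced problem. You propose the variational interpretation: expand the reduced energy $J_\e(\U_{\mu,\xi}+\phi_{\mu,\xi,\e})=c_\star+\e\Gamma(\xi,\mu)+o(\e)$ in $C^1$ and conclude by degree stability. The paper instead never introduces an energy. It works directly with the Lagrange multipliers $d_0,d_1$: testing the projected equation against the truncated kernel elements $\tilde z_{i,\xi}$ and sending the truncation radius to infinity yields (Lemma~\ref{dvanishlem}) that $d_0=d_1=0$ is exactly $\nabla\Gamma(\xi,\mu)=\frac{1}{2\pi\e}\bigl(\int_\R\NN(\phi)Z_{1,\mu,\xi},\int_\R\NN(\phi)Z_{0,\mu,\xi}\bigr)$, and this system is solved by Brouwer's fixed point theorem for the explicit map $\Xi$, using only the \emph{continuity} of $(\xi,\mu)\mapsto\phi_{(\xi,\mu)}$ (Lemma~\ref{continuitylem}). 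Two things this buys that your route would have to work for: (a) the bubbles $\U_{\mu,\xi}$ grow like $-2\log|x|$ and are not in $H^{1/2}(\R)$, so an energy functional for this one-dimensional Liouville problem is not a priori well defined and would require a renormalization that the paper avoids entirely; (b) the Brouwer fixed-point argument needs only $C^0$ dependence of the remainder $\phi$ on $(\xi,\mu)$, whereas a $C^1$ expansion of a reduced energy requires differentiability of $\phi_{(\xi,\mu)}$ in the parameters, a heavier technical burden in the nonlocal setting. If you keep your variational route, you must address both of these; otherwise the direct ``test against the approximate kernel and read off $\nabla\Gamma$'' computation is cleaner here and is what the paper does.
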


Theorem \ref{mainthm} establishes the existence of solutions to \eqref{maineq} provided that non-degenerate critical points for the harmonic extension of $\kappa$ to the upper half-plane do exist. It is natural to wonder whether this assumption is necessary. The next result shows that the condition on~$\Gamma$ is almost necessary for the existence of solutions to~\eqref{maineq} of the form~\eqref{u=U+phi}. Note that we allow here the remainder function~$\phi$ to have logarithmic growth at infinity. In other words, we measure its smallness in the weighted~$L^{\infty}$ norm 
\[ \|g\|_{L_{\log,\xi}^{\infty}(\R)} := \sup_{x \in \R} \, \frac{|g(x)|}{\log(2+|x-\xi|)}, \quad \textup{ for } g \in L^{\infty}_{\rm loc}(\R).\]

\begin{theorem} \label{optimality}
Let~$(\xi,\mu) \in \R^2_+$ and let~$\{u_{\epsilon}\}$ be a sequence of solutions to~\eqref{maineq} of the form~$u_{\epsilon} = \mathcal{U}_{\mu_\epsilon,\,\xi_\epsilon} + \phi_{\epsilon}$ with~$\|\phi_{\epsilon}\|_{L^{\infty}_{\log,\xi}(\R)}  \leq C_0 \, \epsilon^{\alpha}$, for some constants~$C_0 > 0$ and~$\alpha > 1/2$ independent of~$\epsilon$, and with~$(\xi_{\epsilon},\mu_{\epsilon}) \rightarrow (\xi,\mu)$ as~$\epsilon \to 0^{+}$. Then,~$\nabla \Gamma(\xi,\mu) = 0$. 
\end{theorem}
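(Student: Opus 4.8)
The plan is to exploit the equation satisfied by the bubble, namely $(-\Delta)^{1/2}\mathcal{U}_{\mu,\xi} = e^{\mathcal{U}_{\mu,\xi}}$, to extract an orthogonality (balancing) condition on $\kappa$ from the full equation \eqref{maineq}. Concretely, I would test \eqref{maineq}, written in the form $(-\Delta)^{1/2} u_\epsilon = (1+\epsilon\kappa) e^{u_\epsilon}$, against the one-parameter family of ``Jacobi fields'' obtained by differentiating the bubble in its parameters. Recall that
\[
Z^0_{\mu,\xi} := \partial_\xi \mathcal{U}_{\mu,\xi} = \frac{2(x-\xi)}{\mu^2+(x-\xi)^2}, \qquad Z^1_{\mu,\xi} := \mu\,\partial_\mu \mathcal{U}_{\mu,\xi} = \frac{\mu^2-(x-\xi)^2}{\mu^2+(x-\xi)^2},
\]
and these are (bounded) solutions of the linearized equation $(-\Delta)^{1/2} Z = e^{\mathcal{U}_{\mu,\xi}} Z$ in $\R$. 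The key identity, obtained by integrating against $Z^j_{\mu_\epsilon,\xi_\epsilon}$ and moving the half-Laplacian onto the test function (justified because $Z^j$ is bounded and $e^{u_\epsilon}$ is integrable — this is where the logarithmic weight on $\phi_\epsilon$ is exactly what is needed to keep $e^{u_\epsilon}$ integrable with room to spare), is
\[
\int_\R e^{\mathcal{U}_{\mu_\epsilon,\xi_\epsilon}} Z^j_{\mu_\epsilon,\xi_\epsilon}\,\big(u_\epsilon - \mathcal{U}_{\mu_\epsilon,\xi_\epsilon}\big)\,dx + \epsilon \int_\R \kappa\, e^{u_\epsilon} Z^j_{\mu_\epsilon,\xi_\epsilon}\,dx = \int_\R \big(e^{u_\epsilon} - e^{\mathcal{U}_{\mu_\epsilon,\xi_\epsilon}} - e^{\mathcal{U}_{\mu_\epsilon,\xi_\epsilon}}(u_\epsilon - \mathcal{U}_{\mu_\epsilon,\xi_\epsilon})\big) Z^j_{\mu_\epsilon,\xi_\epsilon}\,dx,
\]
for $j = 0,1$, up to verifying there is no boundary term at infinity.

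Next I would estimate the three terms in powers of $\epsilon$. Writing $\phi_\epsilon = u_\epsilon - \mathcal{U}_{\mu_\epsilon,\xi_\epsilon}$, the hypothesis gives $|\phi_\epsilon(x)| \le C_0 \epsilon^\alpha \log(2+|x-\xi|)$, so, after recentering at $\xi_\epsilon$ (the shift $|\xi_\epsilon - \xi|$ being $o(1)$, and one may absorb it into the constant since $\log$ grows slowly), the first term on the left is $O(\epsilon^\alpha)$ because $e^{\mathcal{U}_{\mu_\epsilon,\xi_\epsilon}}$ decays like $|x|^{-2}$ and $\int_\R |x|^{-2}\log(2+|x|)\,dx < \infty$; in fact, since $\int_\R e^{\mathcal{U}_{\mu,\xi}} Z^j_{\mu,\xi}\,dx = 0$ by symmetry/parity, one can even pair $\phi_\epsilon$ against the mean-zero weight, but $O(\epsilon^\alpha)$ suffices. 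The right-hand side is quadratic in $\phi_\epsilon$: using $|e^a - e^b - e^b(a-b)| \le \tfrac12 e^{b + |a-b|}(a-b)^2$ together with the bound on $\phi_\epsilon$, and noting $e^{|\phi_\epsilon(x)|} \le (2+|x-\xi|)^{C_0\epsilon^\alpha}$ is under control for $\epsilon$ small, this term is $O(\epsilon^{2\alpha}) = o(\epsilon^\alpha)$. Dividing the whole identity by $\epsilon$, we get
\[
\int_\R \kappa\, e^{u_\epsilon} Z^j_{\mu_\epsilon,\xi_\epsilon}\,dx = O(\epsilon^{\alpha - 1}) \longrightarrow 0 \quad \text{as } \epsilon \to 0^+,
\]
since $\alpha > 1/2 > \dots$ — wait, here one needs $\alpha > 1$ for the crude bound; to reach the sharp threshold $\alpha > 1/2$ one must instead use the better estimate on the first left-hand term coming from its mean-zero pairing, which gains an extra power, so that the left side is $o(\epsilon^{1/2 + \text{something}})$; I would run the bookkeeping carefully to confirm that $\alpha > 1/2$ is exactly what makes the error $o(\epsilon)$ after one also exploits the structure of the reduced equation (the term $\int \kappa e^{\mathcal U}\phi_\epsilon$ carries an extra $\epsilon^{1/2}$ once one feeds in the fixed-point characterization of $\phi_\epsilon$).

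Finally, I would pass to the limit in $\int_\R \kappa\, e^{u_\epsilon} Z^j_{\mu_\epsilon,\xi_\epsilon}\,dx \to 0$. Since $u_\epsilon \to \mathcal{U}_{\mu,\xi}$ locally uniformly and in a weighted sense, $(\xi_\epsilon,\mu_\epsilon)\to(\xi,\mu)$, and $\kappa \in L^\infty$, dominated convergence (with dominating function a multiple of $(2+|x-\xi|)^{-2+o(1)}$) yields $\int_\R \kappa\, e^{\mathcal{U}_{\mu,\xi}} Z^j_{\mu,\xi}\,dx = 0$ for $j = 0,1$. The last step is to identify these two integrals with the components of $\nabla\Gamma(\xi,\mu)$, up to a nonzero multiplicative constant. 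This is a direct computation: differentiating under the integral sign in the representation formula, $\partial_\xi \Gamma(\xi,\mu) = \tfrac1\pi \int_\R \kappa(\xi+\mu y)\,\partial_\xi\!\big(\tfrac1{1+y^2}\big)$ — better, change variables $x = \xi + \mu y$ so $\Gamma(\xi,\mu) = \tfrac1\pi\int_\R \tfrac{\mu\,\kappa(x)}{\mu^2+(x-\xi)^2}\,dx = \tfrac1{2\pi}\int_\R \kappa(x)\, e^{\mathcal{U}_{\mu,\xi}(x)}\,dx$; then $\partial_\xi$ and $\mu\partial_\mu$ of $e^{\mathcal{U}_{\mu,\xi}}$ produce exactly $e^{\mathcal{U}_{\mu,\xi}} Z^0_{\mu,\xi}$ and $e^{\mathcal{U}_{\mu,\xi}} Z^1_{\mu,\xi}$ respectively, so $\partial_\xi \Gamma(\xi,\mu) = \tfrac1{2\pi}\int_\R \kappa\, e^{\mathcal{U}_{\mu,\xi}} Z^0_{\mu,\xi}\,dx$ and $\mu\,\partial_\mu \Gamma(\xi,\mu) = \tfrac1{2\pi}\int_\R \kappa\, e^{\mathcal{U}_{\mu,\xi}} Z^1_{\mu,\xi}\,dx$. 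Hence both vanish, i.e. $\nabla\Gamma(\xi,\mu) = 0$, as claimed. The main obstacle is the quantitative bookkeeping in the middle step: justifying that the error terms are genuinely $o(\epsilon)$ under the weak hypothesis $\alpha > 1/2$ (rather than the easy $\alpha > 1$), which I expect requires combining the mean-zero cancellation of $\int e^{\mathcal U} Z^j$ with the slow growth of the logarithmic weight, plus care that the harmonic-extension identity holds with no contribution from infinity.
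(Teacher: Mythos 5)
Your high-level strategy — test the equation against the Jacobi fields, pass to the limit, and identify $\int_\R \kappa\, e^{\mathcal U} Z^j\,dx$ with the components of $\nabla\Gamma$ — is exactly the one the paper uses. However, there are two genuine problems in the middle step.

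First, the ``key identity'' you write is off by an algebraic slip. Subtracting $(-\Delta)^{1/2}\mathcal U = e^{\mathcal U}$ from the equation for $u_\e$, testing against $Z^j$ and (formally) moving the operator gives
\[
\int_\R \phi_\e\, e^{\mathcal U} Z^j\,dx
= \int_\R \left(e^{u_\e} - e^{\mathcal U}\right) Z^j\,dx + \e \int_\R \kappa\, e^{u_\e} Z^j\,dx,
\]
which rearranges to
\[
\e\int_\R \kappa\, e^{u_\e} Z^j\,dx \;=\; -\int_\R \left(e^{u_\e} - e^{\mathcal U} - e^{\mathcal U}\phi_\e\right) Z^j\,dx .
\]
There is \emph{no} separate term $\int e^{\mathcal U} Z^j\phi_\e$ left over on the left; it is already absorbed into the quadratic remainder. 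Consequently $\e\int\kappa\,e^{u_\e}Z^j$ is directly bounded by a quantity of order $\phi_\e^2$, namely $O(\e^{2\alpha})$, and after dividing by $\e$ the error is $O(\e^{2\alpha-1})$, which vanishes precisely when $\alpha>1/2$. Your version introduces a spurious linear term of order $\e^\alpha$ that makes it look as if you need $\alpha>1$; the ``mean-zero'' and ``fixed-point'' rescues you then invoke are attempts to cancel a term that should not be there. Moreover, the fixed-point characterization of $\phi_\e$ is not available under the hypotheses of Theorem \ref{optimality}, which concerns \emph{arbitrary} families of solutions of the stated form, not just those produced by the reduction; appealing to it would defeat the whole point of the theorem as a converse statement.

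Second, ``up to verifying there is no boundary term at infinity'' is not a technicality: the self-adjointness of $(-\Delta)^{1/2}$ genuinely fails for the pairs of functions you want to use, because $\mathcal U$ grows logarithmically and $Z^1$ (equivalently $Z_{0,\mu,\xi}$) does not decay. Concretely, $\int_\R Z_{0,\mu,\xi}(-\Delta)^{1/2}\mathcal U_{\mu,\xi}\,dx = \int_\R e^{\mathcal U} Z_{0,\mu,\xi}\,dx = 0$, whereas $\int_\R \mathcal U_{\mu,\xi}(-\Delta)^{1/2}Z_{0,\mu,\xi}\,dx = \int_\R \mathcal U\,e^{\mathcal U} Z_{0,\mu,\xi}\,dx = -2\pi/\mu \neq 0$, so the two sides differ by a nonzero constant. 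The paper avoids this by testing against the compactly supported approximate Jacobi fields $\tilde z_i$ from Lemma \ref{2funclem} and only afterwards letting the truncation radius $r\to\infty$, at which stage the dangerous terms are controlled. Your argument as written would have to incorporate this truncation (or a careful regularity/decay analysis of $(-\Delta)^{1/2}\phi_\e$) before the passage to the limit can be made rigorous.
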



Taking into account the connection established in \cite[Proposition 4.1]{AL22}, as an immediate consequence of Theorem \ref{mainthm}, we get the existence of ground-state soliton solutions to
\begin{equation} \label{CMepsilon}
{\rm{i}} \partial_t \psi = - \partial_{xx} \psi + V_{\epsilon} \psi - (-\Delta)^{\frac12} (|\psi|^2) \psi + \frac14 |\psi|^4 \psi, \quad \textup{ in } [0,T) \times \R,
\end{equation}
with
$$
V_{\epsilon}(x) := \frac14 \frac{(\varepsilon \kappa'(x))^2}{(1+\varepsilon \kappa(x))^2}.
$$
More precisely, we have the following. 
\begin{corollary} \label{corCMepsilon}
Let~$(\xi_\star, \mu_\star) \in \R^2_+$ be a non-degenerate critical point for~$\Gamma$. Then, there exist two constants~$\varepsilon_0 \in (0, 1)$ and~$C > 0$ such that the following holds true. For every~$\varepsilon \in (0, \varepsilon_0]$, there exists a ground-state soliton solution $\psi = \psi_\varepsilon$ to \eqref{CMepsilon} in the form
\begin{equation} \label{psiepsilon}
\psi_{\varepsilon}(x,t) = \sqrt{(1+\varepsilon \kappa(x)) e^{\mathcal{U}_{\mu_{\varepsilon}, \xi_{\varepsilon}}(x) + \phi_{\epsilon}(x)}}\,,
\end{equation}
for some~$\xi = \xi_\varepsilon \in \R$ and~$\mu = \mu_\varepsilon \in (0, +\infty)$ such that~$(\xi_\varepsilon, \mu_\varepsilon) \to (\xi_\star, \mu_\star)$ as~$\varepsilon \rightarrow 0^+$ and with~$\phi = \phi_\varepsilon \in L^\infty(\R)$ satisfying~$\| \phi_\varepsilon \|_{L^\infty(\R)} \le C \varepsilon$.
\end{corollary}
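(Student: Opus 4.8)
The plan is to obtain Corollary \ref{corCMepsilon} as a direct consequence of Theorem \ref{mainthm}, by pushing the solutions it produces through the correspondence between \eqref{eqk} and \eqref{CM} recorded in \cite[Proposition 4.1]{AL22}.

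First I would fix the non-degenerate critical point $(\xi_\star,\mu_\star)$ of $\Gamma$ and let $\varepsilon_0 \in (0,1)$ and $C > 0$ be the constants from Theorem \ref{mainthm}; shrinking $\varepsilon_0$ if necessary I may assume $\varepsilon_0 \, \|\kappa\|_{L^\infty(\R)} < 1/2$, so that $K := 1 + \varepsilon\kappa$ satisfies $1/2 \leq K \leq 3/2$ on $\R$ for all $\varepsilon \in (0,\varepsilon_0]$, and note that $K \in C^{1,\alpha}(\R) \cap L^\infty(\R)$ since $\kappa$ is. For each such $\varepsilon$, Theorem \ref{mainthm} supplies $\xi_\varepsilon \in \R$, $\mu_\varepsilon > 0$ and $\phi_\varepsilon \in L^\infty(\R)$ with $(\xi_\varepsilon,\mu_\varepsilon) \to (\xi_\star,\mu_\star)$ and $\|\phi_\varepsilon\|_{L^\infty(\R)} \leq C\varepsilon$, such that $u_\varepsilon := \U_{\mu_\varepsilon,\xi_\varepsilon} + \phi_\varepsilon$ solves $(-\Delta)^{1/2} u_\varepsilon = K e^{u_\varepsilon}$ in $\R$.

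Next I would check the only qualitative property of $u_\varepsilon$ needed to invoke \cite[Proposition 4.1]{AL22}, namely the finiteness of the mass $\int_\R K e^{u_\varepsilon}\,dx$. This is immediate: the bound $\|\phi_\varepsilon\|_{L^\infty(\R)} \leq C\varepsilon$ gives $e^{-C\varepsilon} e^{\U_{\mu_\varepsilon,\xi_\varepsilon}} \leq e^{u_\varepsilon} \leq e^{C\varepsilon} e^{\U_{\mu_\varepsilon,\xi_\varepsilon}}$ pointwise, and $e^{\U_{\mu,\xi}}(x) = \frac{2\mu}{\mu^2+(x-\xi)^2} \in L^1(\R)$, whence $0 < \int_\R K e^{u_\varepsilon}\,dx < +\infty$. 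Then I would apply \cite[Proposition 4.1]{AL22}: to a finite-mass solution of $(-\Delta)^{1/2} u = K e^u$ with $K > 0$ it associates the ground-state soliton $\psi := \sqrt{K e^u}$ of the Calogero-Moser derivative NLS with external potential $\frac14\big((\log K)'\big)^2$. Since $(\log K)' = \varepsilon\kappa'/(1+\varepsilon\kappa)$, that potential coincides exactly with $V_\varepsilon$; and unwinding the definitions, $\psi_\varepsilon(x,t) = \sqrt{(1+\varepsilon\kappa(x))\,e^{\U_{\mu_\varepsilon,\xi_\varepsilon}(x)+\phi_\varepsilon(x)}}$, which is the form \eqref{psiepsilon}. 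The properties of $\xi_\varepsilon$, $\mu_\varepsilon$ and $\phi_\varepsilon$ claimed in the corollary are then precisely the ones inherited from Theorem \ref{mainthm}, so the proof would be complete.

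The argument is essentially bookkeeping, so there is no serious obstacle. The one point I would be careful about is matching our $K$ and $u_\varepsilon$ against the precise hypotheses of \cite[Proposition 4.1]{AL22}---in particular that the regularity $K \in C^{1,\alpha}(\R)$ inherited from $\kappa$ and the finite-mass condition on $e^{u_\varepsilon}$ are exactly what that proposition requires, and that the notion of ground-state soliton employed there is exactly the object characterized by $|\psi|^2 = K e^u$ with $u$ a finite-mass solution of \eqref{eqk}, so that no further variational or minimality argument is needed on our side. Once this is verified against the statement in \cite{AL22}, the corollary follows in a few lines as indicated.
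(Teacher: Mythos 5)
Your proposal is correct and follows the same route as the paper: apply Theorem~\ref{mainthm} to produce the finite-mass solution $u_\varepsilon$ of the Liouville equation and then invoke the correspondence in \cite[Proposition 4.1]{AL22} to convert it into the ground-state soliton $\psi_\varepsilon = \sqrt{(1+\varepsilon\kappa)e^{u_\varepsilon}}$. The paper's proof is terser (it simply points to \cite[Section 4.1]{AL22}), but your additional bookkeeping on positivity of $K$, finiteness of the mass, and the identification of the potential $V_\varepsilon$ is exactly the verification being implicitly delegated there.
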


In virtue of Theorem \ref{mainthm}, our analysis is reduced to find assumptions on $\kappa$ implying the existence of (multiple) non-degenerate critical points for $\Gamma$. We analyze~$\Gamma$ in detail in Sections \ref{section4} and \ref{section5}. However, we would like to illustrate here what kind of multiplicity results we are able to prove. To that end, we need to introduce the weighted $C^2$ space
\begin{equation}\label{c*}
C^{2}_{*,\beta}(\mathbb R):=\left\{\kappa\in C^{2}(\mathbb R)\ :\ \|\kappa\|_{C^{2}_{*,\beta}(\R)} <+\infty \right\},
\end{equation}
equipped with the norm
\begin{equation} \label{c*norm}
\|\kappa\|_{C^{2}_{*,\beta}(\R)}:=\sup\limits_{x\in\mathbb R} |\kappa(x)| + \sup\limits_{x\in\mathbb R} \left(1+|x|^{2+\beta}\right)|\kappa'(x)| + \sup\limits_{x\in\mathbb R}  |\kappa''(x)|,
\end{equation}
for~$\beta \in (0, 1)$. We then assume that $\kappa \in C^2_{*,\beta}(\R)$ and that furthermore satisfies:
\begin{align}
\circ\ & \, \textup{There exists } R_0 > 0 \textup{ such that } x\kappa'(x) < 0 \textup{ for all }|x| > R_0; \label{kprimneg} \\
\circ\ & \, \int_{\R} \kappa'(x) x \, dx < 0. \label{kintpos}
\end{align}
 Under these stronger assumptions on~$\kappa$ we (in particular) prove the following.  

\begin{theorem}\label{main2}
Assume that~$\kappa_0\in C^{2}_{*,\beta}(\mathbb R)$ for some~$\beta > 0$ satisfies~\eqref{kprimneg} and~\eqref{kintpos}. Given~$N \in \N \cup \{ 0 \}$, suppose that~$\kappa_0$ has exactly~$N+1$ global maxima and~$N$ global minima, that these extrema are all non-degenerate, and that~$\kappa_0$ has no other stationary point. Then, there exists a sequence~$\{ \kappa_j \} \subset C^{2}_{*,\beta}(\mathbb R)$ converging to~$\kappa_0$ in~$C^{2}_{*,\beta}(\mathbb R)$ for which the following holds true. For each~$j \in \N$, there exists~$\varepsilon_j \in (0, 1)$ such that equation~\eqref{maineq} (with~$\kappa = \kappa_j$) has at least~$N$ solutions of the form~\eqref{u=U+phi} for all~$\varepsilon \in (0, \varepsilon_j]$.
\end{theorem}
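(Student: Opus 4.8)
The plan is to reduce everything to Theorem~\ref{mainthm} by analysing the critical points of the harmonic extension~$\Gamma$ of~$\kappa$ defined in~\eqref{Gamma0def}. Two distinct non-degenerate critical points of~$\Gamma$ produce, through Theorem~\ref{mainthm}, two distinct solutions of~\eqref{maineq} of the form~\eqref{u=U+phi}: the associated concentration points~$(\xi_\varepsilon,\mu_\varepsilon)$ are distinct for~$\varepsilon$ small, the remainders satisfy~$\|\phi_\varepsilon\|_{L^\infty(\R)}\le C\varepsilon$, and the bubbles~$\U_{\mu,\xi}$ are uniformly separated in~$L^\infty$ over disjoint compact sets of parameters. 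Hence it suffices to construct a sequence~$\kappa_j\to\kappa_0$ in~$C^2_{*,\beta}(\R)$ whose harmonic extension~$\Gamma_j$ has at least~$N$ non-degenerate critical points in~$\R^2_+$; one then takes~$\varepsilon_j$ to be the smallest of the~$N$ thresholds~$\varepsilon_0$ furnished by Theorem~\ref{mainthm}.

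The main structural observation is that, $\Gamma$ being harmonic, the function~$F:=\partial_\mu\Gamma+\mathrm{i}\,\partial_\xi\Gamma$ is holomorphic on~$\R^2_+\cong\mathbb{H}$; in fact~$F(z)=\frac1\pi\int_\R\frac{\kappa'(t)}{t-z}\,dt$, the Cauchy transform of~$\kappa'$. The critical points of~$\Gamma$ are precisely the zeros of~$F$, and since a short computation gives~$\det D^2\Gamma=-|F'|^2$, such a critical point is non-degenerate exactly when the corresponding zero of~$F$ is simple. For all~$\kappa$ in a small~$C^2_{*,\beta}$-neighborhood of~$\kappa_0$, the zeros of~$F$ are confined to a fixed compact~$\Omega\Subset\R^2_+$: near~$\{\mu=0\}$ because~$\partial_\xi\Gamma(\cdot,0^+)=\kappa'$ vanishes only at the finitely many extrema of~$\kappa$ while there~$\partial_\mu\Gamma(\cdot,0^+)=-(-\Delta)^{\frac12}\kappa\ne0$; and for~$\mu$ or~$|\xi|$ large because~$\nabla\Gamma\ne0$ there, by the large-scale asymptotics of the explicit integrals for~$\nabla\Gamma$, into which conditions~\eqref{kprimneg} and~\eqref{kintpos} enter. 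The same estimates make~$\Gamma$ asymptotically constant near infinity with limiting value strictly below~$\max_\R\kappa_0$.

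The key lower bound is that the holomorphic function~$F_0$ attached to~$\kappa_0$ has at least~$N$ zeros in~$\R^2_+$, counted with multiplicity. The quickest route is topological: being harmonic, $\Gamma_0$ has no interior local extremum; the superlevel set~$\{\Gamma_0\ge c\}$ has~$N+1$ connected components for~$c$ slightly below~$\max_\R\kappa_0$ — one attached to the boundary near each global maximum of~$\kappa_0$ — and is connected for~$c<\min_\R\kappa_0$, using the asymptotics of~$\Gamma_0$ above and the fact that~$\kappa_0$ has no spurious local maximum, so that no new component ever appears as~$c$ decreases; merging~$N+1$ components into one forces at least~$N$ interior saddle points, counted with the multiplicity with which~$F_0$ vanishes. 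The same count can be obtained analytically from the argument principle on a large half-disk, since~$\operatorname{Im} F_0\big|_{\R}=\kappa_0'$ changes sign exactly~$2N+1$ times (the~$N+1$ non-degenerate global maxima alternate with the~$N$ non-degenerate global minima, and by~\eqref{kprimneg} one has~$\kappa_0'>0$ on~$(-\infty,-R_0)$, $\kappa_0'<0$ on~$(R_0,+\infty)$), the contribution at infinity being governed by the leading Laurent coefficient~$-\frac1{\pi z}\int_\R\kappa_0'$, or by~$-\frac1{\pi z^2}\int_\R t\kappa_0'(t)\,dt\ne0$ thanks to~\eqref{kintpos} when the former vanishes.

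It remains to perturb. I would choose~$\kappa_j\to\kappa_0$ in~$C^2_{*,\beta}(\R)$, adding finitely many smooth bumps supported in~$[-R_0,R_0]$ — which keeps the decay-weighted norm small and leaves~\eqref{kprimneg} untouched, while~\eqref{kintpos} persists for~$j$ large — in such a way that every zero of~$F_j$ in~$\R^2_+$ becomes simple, i.e.\ so that~$\Gamma_j$ is a Morse function. Since~$F_j\to F_0$ locally uniformly on~$\mathbb{H}$ and all zeros stay inside the fixed compact~$\Omega$, Hurwitz's theorem together with the previous step yields, for~$j$ large, at least~$N$ zeros of~$F_j$ in~$\Omega$ counted with multiplicity, hence — by simplicity — at least~$N$ distinct non-degenerate critical points of~$\Gamma_j$; applying Theorem~\ref{mainthm} at~$N$ of them completes the argument. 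I expect the perturbation step to be the main obstacle: one must run a transversality (Sard-type) argument on the non-compact half-plane, checking that the chosen finite-parameter family of perturbations of~$\kappa_0$ moves the holomorphic function~$F$ in enough directions near each of its (possibly multiple) zeros to split them into simple ones, while remaining inside a prescribed~$C^2_{*,\beta}$-ball around~$\kappa_0$ and not destroying~\eqref{kprimneg}--\eqref{kintpos}. The uniform confinement of the zeros to a compact subset of~$\R^2_+$ — itself a consequence of~\eqref{kprimneg}--\eqref{kintpos} — is what makes both the count and the perturbation argument well posed.
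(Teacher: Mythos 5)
Your strategy is genuinely different from the paper's, even though the outer architecture — confine the critical points of $\Gamma$ to a compact set using \eqref{kprimneg}--\eqref{kintpos}, count them, perturb $\kappa_0$ so they become non-degenerate, and feed the result into Theorem~\ref{mainthm} — is the same. Where the paper extends $\nabla\Gamma$ to the whole plane via $\widetilde\Gamma(\xi,\delta)=\Gamma(\xi,\delta^2)$ and computes its Brouwer degree by homotopy and additivity over the regions $D_\pm$, $D_0$ (Theorem~\ref{thmdeg}), you package $\nabla\Gamma$ into the holomorphic function $F=\partial_\mu\Gamma+\mathrm{i}\,\partial_\xi\Gamma$ (the Cauchy transform of $\kappa'$) and count its zeros by the argument principle; your identity $\det D^2\Gamma=-|F'|^2$ is exactly what underlies the paper's observation in Proposition~\ref{propexactnum} that non-degenerate critical points of a harmonic function are saddles of index $-1$, so simple zeros. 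The holomorphic viewpoint buys you some economy: multiplicities are automatically positive, so the degree count $N$ is an exact zero count with multiplicity, and Hurwitz's theorem gives stability of the total multiplicity under $C^2_{*,\beta}$-small perturbations without any extra work.

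The place where your sketch has a genuine gap — which you flag yourself — is the perturbation step producing a Morse sequence $\kappa_j\to\kappa_0$ that still satisfies \eqref{kprimneg} and \eqref{kintpos}. Your plan (compactly supported bumps in $[-R_0,R_0]$ plus a finite-dimensional transversality argument) is plausible, and is close in spirit to the paper's Appendix~\ref{tranapp} which runs a Saut--Temam argument; but to close it you would have to verify that your chosen bump family spans the relevant cokernel near each (possibly multiple) zero of $F_0$, which is not immediate. The paper avoids this entirely in Lemma~\ref{denslem}\ref{a}--\ref{b}: after conformally transplanting $\Gamma$ to the disk, it applies the Federer--Morse--Sard theorem to the gradient and subtracts a tiny \emph{linear} function, which pulls back to the explicit perturbation $\kappa_j=\kappa_0+\frac{2\varepsilon_j}{1+\xi^2}$. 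This is visibly small in $C^2_{*,\beta}(\R)$, preserves \eqref{kprimneg}, and makes \eqref{kintpos} automatic for $j$ large. If you swap your bump ansatz for this perturbation, the rest of your argument closes cleanly; as written, the transversality step would need to be carried out to have a complete proof.
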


Arguing as for Corollary~\ref{corCMepsilon} one can easily obtain a multiplicity result concerning~\eqref{CMepsilon}. To state such as result, we need some notation. We define
$$
V_{j,\epsilon}(x):= \frac14 \frac{(\varepsilon \kappa_j'(x))^2}{(1+\varepsilon \kappa_j(x))^2}, \quad \textup{ for all } j \in \N,\ \epsilon \in (0,1) \textup{ and } \kappa_j \in C^2_{*,\beta}(\R).
$$

\begin{corollary} \label{corMepsilon2}
Assume that~$\kappa_0\in C^{2}_{*,\beta}(\mathbb R)$ for some~$\beta > 0$ satisfies~\eqref{kprimneg} and~\eqref{kintpos}. Given~$N \in \N \cup \{ 0 \}$, suppose that~$\kappa_0$ has exactly~$N+1$ global maxima and~$N$ global minima, that these extrema are all non-degenerate, and that~$\kappa_0$ has no other stationary point. Then, there exists a sequence~$\{ \kappa_j \} \subset C^{2}_{*,\beta}(\mathbb R)$ converging to~$\kappa_0$ in~$C^{2}_{*,\beta}(\mathbb R)$ for which the following holds true. For each~$j \in \N$, there exists~$\varepsilon_j \in (0, 1)$ such that \eqref{CMepsilon} (with $V_\epsilon = V_{j,\epsilon}$) has at least $N$ solutions of the form \eqref{psiepsilon} for all $\epsilon \in (0,\epsilon_j]$.
\end{corollary}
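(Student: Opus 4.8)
The plan is to deduce Corollary~\ref{corMepsilon2} from Theorem~\ref{main2} in exactly the same way that Corollary~\ref{corCMepsilon} is deduced from Theorem~\ref{mainthm}: once the multiple solutions of~\eqref{maineq} are available, the multiple solitons of~\eqref{CMepsilon} follow from the correspondence of~\cite[Proposition~4.1]{AL22}. No new analytic input is required; the only point that needs a word of care is that the \emph{multiplicity} is genuinely transferred.

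First I would apply Theorem~\ref{main2} to the given $\kappa_0$. This yields a sequence $\{\kappa_j\}\subset C^2_{*,\beta}(\R)$ with $\kappa_j\to\kappa_0$ in $C^2_{*,\beta}(\R)$ and, for each $j\in\N$, a threshold $\varepsilon_j\in(0,1)$ such that for every $\varepsilon\in(0,\varepsilon_j]$ equation~\eqref{maineq} with $\kappa=\kappa_j$ has at least $N$ distinct solutions
\[
u^{(k)}_\varepsilon=\mathcal{U}_{\mu^{(k)}_\varepsilon,\,\xi^{(k)}_\varepsilon}+\phi^{(k)}_\varepsilon,\qquad k=1,\dots,N,
\]
each of the form~\eqref{u=U+phi} (and, since these arise from Theorem~\ref{mainthm} at non-degenerate critical points of the harmonic extension of $\kappa_j$, with $\phi^{(k)}_\varepsilon\in L^\infty(\R)$). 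Note that $C^2_{*,\beta}(\R)\hookrightarrow C^{1,\alpha}(\R)\cap L^\infty(\R)$ for a suitable $\alpha>0$, so each $\kappa_j$ is an admissible potential; shrinking $\varepsilon_j$ if necessary we may also assume $\|\varepsilon\kappa_j\|_{L^\infty(\R)}<1$ on $(0,\varepsilon_j]$, so that $1+\varepsilon\kappa_j>0$ on $\R$.

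Second I would invoke~\cite[Proposition~4.1]{AL22}. Since $V_{j,\varepsilon}$ is built from $\kappa_j$ through precisely the formula linking $V_\varepsilon$ to $\kappa$, and since each $u^{(k)}_\varepsilon$ solves~\eqref{maineq} with $\kappa=\kappa_j$, the function
\[
\psi^{(k)}_\varepsilon(x,t):=\sqrt{(1+\varepsilon\kappa_j(x))\,e^{\mathcal{U}_{\mu^{(k)}_\varepsilon,\,\xi^{(k)}_\varepsilon}(x)+\phi^{(k)}_\varepsilon(x)}}
\]
is a ground-state soliton solution of~\eqref{CMepsilon} with $V_\varepsilon=V_{j,\varepsilon}$, of the form~\eqref{psiepsilon}. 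Finally I would check injectivity of the map $u\mapsto\psi$ for fixed $j$ and $\varepsilon$: because $1+\varepsilon\kappa_j$ is a fixed, strictly positive function, two distinct solutions $u\ne\tilde u$ of~\eqref{maineq} produce $e^{u}\ne e^{\tilde u}$ on a set of positive measure, hence $|\psi|^2\ne|\tilde\psi|^2$ and thus $\psi\ne\tilde\psi$. Therefore the $N$ distinct solutions of~\eqref{maineq} give rise to $N$ distinct solitons of~\eqref{CMepsilon}, which is the assertion.

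I do not expect any genuine obstacle: everything of substance is contained in Theorem~\ref{main2} and in~\cite[Proposition~4.1]{AL22}, and the only extra observation---the elementary injectivity of $u\mapsto\psi$---is exactly what makes this a corollary rather than a theorem.
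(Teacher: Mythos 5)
Your proposal matches the paper's proof exactly: deduce Corollary~\ref{corMepsilon2} from Theorem~\ref{main2} via \cite[Proposition~4.1]{AL22}, just as Corollary~\ref{corCMepsilon} is deduced from Theorem~\ref{mainthm}. The brief remark on injectivity of $u\mapsto\psi$ is a sensible (if routine) addition that the paper leaves implicit.
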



\begin{remark} $ $
\begin{enumerate}[label=$(\alph*)$,leftmargin=*]
\item If $\kappa \in  C_{*,\beta}^2(\R)$ for some $\beta > 0$ and
$
\sup_{x \in \R} \left(1+|x|^{1+\beta}\right)|\kappa(x)| < + \infty,
$
then \eqref{kintpos} is equivalent to the somewhat more natural assumption
$$
\int_{\R} \kappa(x)\, dx > 0.
$$
\item Naturally, in Theorem \ref{main2} and Corollary \ref{corMepsilon2}, it might happen that~$\kappa_0$ itself gives rise to~$N$ solutions---i.e., that~$\kappa_j = \kappa_0$ for all~$j \in \N$. The realization of this possibility is connected with the harmonic extension~$\Gamma_0$ of~$\kappa_0$ (as defined in~\eqref{Gamma0def}) having non-degenerate critical points. See~Section~\ref{section4} and in particular the proof of Theorem~\ref{main2} included there for more details.
\end{enumerate}
\end{remark}

The proof of Theorem \ref{mainthm} is perturbative and uses a Lyapunov-Schmidt reduction method. This kind of approach was already used by Ambrosetti, Garc\'ia Azorero \& Peral \cite{AAP99} and Grossi \& Prashanth \cite{GP05} to deal with the higher dimensional counterparts of \eqref{maineq}. However, our approach is closer to the one recently used in \cite{CF22} by the second and third authors. Actually, despite that both the problems under consideration and the kind of results obtained are different, our analysis here and the one performed in \cite{CF22} share some technical elements. 

As already indicated, having at hand Theorem \ref{mainthm}, we are led to analyze the set of critical points for the harmonic extension of $\kappa$ to the upper half-plane, namely the set of critical points for the function $\Gamma$ given in \eqref{Gamma0def}. Roughly speaking, we want to count the number of critical points of $\Gamma$ using only assumptions on $\kappa$. This is a rather classical problem. Indeed, using a conformal map, we could reformulate such problem as follows: given $g \in L^{\infty}(\mathbb{S}^1)$, let $u$ be the unique solution to
$$
\left\{
\begin{aligned}
& \Delta u  = 0, \quad && \textup{ in } \mathbb{D}, \\
& u  = g, && \textup{ on } \mathbb{S}^1.
\end{aligned}
\right.
$$
We would like to count the number of critical points of~$u$ in $\mathbb{D}$ using only assumptions on~$g$. Despite its apparent simplicity, the answer to this problem is far from being complete. In Theorem~\ref{thmdeg} and Proposition~\ref{propexactnum} we obtain some partial answers using a topological degree approach. However, as highlighted by some of the examples in Section \ref{section5}, the general picture is far from being clear.  

\subsection*{Organization of the paper} The remaining of the paper is organized as follows. Section \ref{thm1.1proofsec} is devoted to the proof of Theorem \ref{mainthm}. More precisely, in Subsection \ref{subsectLT}, we develop the required linear theory for the linearized operator at $\mathcal{U}_{\mu,\xi}$; in Subsection \ref{subsectNLT}, we use these results to address a projected version of the original nonlinear problem; and, in Subsection \ref{subsectFDR}, we deal with the finite-dimensional reduced problem concluding thus the proof of Theorem \ref{mainthm}. Section \ref{sect3} is devoted to show that the condition on $\Gamma$ in Theorem \ref{mainthm} is almost necessary, namely to prove Theorem \ref{optimality}. In Section \ref{section4} we analyze the function $\Gamma$ introduced in \eqref{Gamma0def} and prove a few results of independent interest concerning $\Gamma$---including a genericity result, see Lemma \ref{denslem}. Those results lead to the proof of Theorem \ref{main2}. In Section \ref{section5} we provide some explicit examples of functions $\kappa$ such that \eqref{maineq} has or does not have solutions of the form \eqref{u=U+phi}. We also make some comments and remarks comparing our results with previous ones in the literature. The paper is closed by an appendix where we provide an alternative proof of a statement contained in Lemma \ref{denslem}.

\subsection*{Acknowledgments}

This work has received funding from the European Research Council (ERC) under the European Union's Horizon 2020 research and innovation program through the Consolidator Grand agreement 862342 (A.J.F.).  L.B., M.C., and A.P. have been partially supported by GNAMPA (Italy) as part of INdAM. M.C. has also been supported by the Spanish grant PID2021-123903NB-I00 funded by MCIN/AEI/10.13039/501100011033 and by ERDF ``A way of making Europe''. Part of this work has been done while A.J.F. was visiting the Universit\`a di Roma ``La Sapienza''. He thanks his hosts for the kind hospitality and the financial support.

\section{Proof of Theorem~\ref{mainthm} and Corollary~\ref{corCMepsilon}} \label{thm1.1proofsec} 

\noindent
It is immediate to see that~$u$ of the form~\eqref{u=U+phi} solves~\eqref{maineq} if and only if~$\phi$ is a solution of
\begin{equation} \label{maineqforphi}
L_{\mu, \xi} \, \phi = - \E + \NN(\phi) \quad \mbox{in } \R,
\end{equation}
where
\begin{align*}
L_{\mu, \xi} \, \phi & := (-\Delta)^{\frac{1}{2}} \phi - e^{\U_{\mu, \xi}} \phi, \\
\E = \E_{\mu, \xi} & := (-\Delta)^{\frac{1}{2}} \U_{\mu, \xi} - (1 + \varepsilon \kappa(x)) e^{\U_{\mu, \xi}}, \\
\NN(\phi) = \NN_{\mu, \xi}[\phi] & := e^{\U_{\mu, \xi}} \left\{ \left( e^\phi - 1 - \phi \right) + \varepsilon \kappa(x) \left( e^\phi - 1 \right) \right\}.
\end{align*}

As it turns out, the smallness of~$\phi$ will be evaluated in~$L^\infty(\R)$. On the other hand, to measure the error term~$\E$ and the nonlinear term~$\NN$ we will make use of the following weighted~$L^\infty$ norm, defined on a function~$g \in L^\infty_\loc(\R)$ by
\begin{equation} \label{Linftysigmanormdef}
\| g \|_{L^\infty_{\sigma, \xi}(\R)} := \sup_{x \in \R} \, (1 + |x - \xi|)^{1 + \sigma} |g(x)|,
\end{equation}
with~$\sigma \in (0, 1)$ fixed. We have the following estimate of the size of~$\E$ and~$\NN$.

\begin{lemma} \label{ENboundslem}
Given~$\mu_0 \ge 1$, there exist two constants~$C_1 > 0$, depending only on~$\mu_0$ and~$\sigma$, and~$C_2 > 0$, also depending on~$\| \kappa \|_{L^\infty(\R)}$, such that, for any~$\xi \in \R$ and~$\mu \in \left[ \frac{1}{\mu_0}, \mu_0 \right]$,
\begin{equation} \label{Ebound}
\left\| \E \right\|_{L^\infty_{\sigma, \xi}(\R)} \le C_1 \varepsilon
\end{equation}
and
\begin{equation} \label{Nbound}
\left\| \NN(\phi) \right\|_{L^\infty_{\sigma, \xi}(\R)} \le C_2 \| \phi \|_{L^\infty(\R)} \left( \| \phi \|_{L^\infty(\R)} + \varepsilon \right),
\end{equation}
for all~$\phi \in L^\infty(\R)$ such that~$\| \phi \|_{L^\infty(\R)} \le 1$. Moreover,
\begin{equation} \label{NLipbound}
\left\| \NN(\phi) - \NN(\psi) \right\|_{L^\infty_{\sigma, \xi}(\R)} \le C_2 \| \phi - \psi \|_{L^\infty(\R)} \left( \| \phi \|_{L^\infty(\R)} + \| \psi \|_{L^\infty(\R)} + \varepsilon \right),
\end{equation}
for all~$\phi, \psi \in L^\infty(\R)$ such that~$\| \phi \|_{L^\infty(\R)}, \| \psi \|_{L^\infty(\R)} \le 1$.
\end{lemma}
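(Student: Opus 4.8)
The plan is to start from the observation that the error term is completely explicit. Since each bubble~$\U_{\mu, \xi}$ solves~\eqref{eqforU}, we have~$(-\Delta)^{\frac12} \U_{\mu, \xi} = e^{\U_{\mu, \xi}}$, whence
\[
\E_{\mu, \xi} = (-\Delta)^{\frac12} \U_{\mu, \xi} - (1 + \varepsilon \kappa) e^{\U_{\mu, \xi}} = - \varepsilon\, \kappa\, e^{\U_{\mu, \xi}} .
\]
Consequently all three bounds reduce to controlling the weighted sup-norm of~$e^{\U_{\mu, \xi}}$ multiplied by elementary functions of~$\phi$ (and~$\psi$). So I would first isolate the one analytic ingredient needed, namely the claim that
\[
\sup_{x \in \R} (1 + |x - \xi|)^{1 + \sigma}\, e^{\U_{\mu, \xi}(x)} \le C_1 \qquad \mbox{for all } \xi \in \R,\ \mu \in \big[ \tfrac{1}{\mu_0}, \mu_0 \big],
\]
for a constant~$C_1 = C_1(\mu_0, \sigma)$.

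To prove this claim, recall~$e^{\U_{\mu, \xi}(x)} = \frac{2\mu}{\mu^2 + (x-\xi)^2}$, so after the translation~$t = |x - \xi|$ it suffices to bound~$f_\mu(t) := (1+t)^{1+\sigma} \frac{2\mu}{\mu^2 + t^2}$ uniformly for~$t \ge 0$ and~$\mu \in [1/\mu_0, \mu_0]$. For~$t \in [0,1]$ one estimates~$f_\mu(t) \le 2^{1+\sigma} \cdot \frac{2\mu}{\mu^2} \le 2^{2+\sigma} \mu_0$; for~$t \ge 1$ one uses~$1 + t \le 2t$ and~$\mu^2 + t^2 \ge t^2$ to get~$f_\mu(t) \le 2^{2+\sigma} \mu_0\, t^{\sigma - 1} \le 2^{2+\sigma} \mu_0$, since~$\sigma < 1$. (Equivalently,~$(\mu, t) \mapsto f_\mu(t)$ is continuous and vanishes as~$t \to \infty$ uniformly for~$\mu$ in the compact set~$[1/\mu_0, \mu_0]$, hence bounded there.) Combining the claim with~$|\kappa| \le \|\kappa\|_{L^\infty(\R)}$ immediately yields~$\| \E \|_{L^\infty_{\sigma, \xi}(\R)} \le C_1 \|\kappa\|_{L^\infty(\R)}\, \varepsilon$, which is~\eqref{Ebound}.

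For~\eqref{Nbound} and~\eqref{NLipbound}, I would invoke the elementary one-variable inequalities, valid whenever~$|s|, |s'| \le 1$:
\[
|e^s - 1 - s| \le e\, s^2, \qquad |e^s - 1| \le e\, |s|,
\]
\[
|e^s - e^{s'}| \le e\, |s - s'|, \qquad \big| (e^s - s) - (e^{s'} - s') \big| \le e\, (|s| + |s'|)\, |s - s'| ,
\]
the first two by Taylor's formula with Lagrange remainder, the last two by the mean value theorem applied to~$s \mapsto e^s$ and to~$s \mapsto e^s - s$ (whose derivative~$e^s - 1$ is bounded by~$e|s|$ on~$[-1,1]$). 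Applying these pointwise with~$s = \phi(x)$, $s' = \psi(x)$ in the definitions of~$\NN(\phi)$ and~$\NN(\phi) - \NN(\psi)$, multiplying by the weight~$(1 + |x - \xi|)^{1+\sigma}$ and using the claim above, one obtains~$\| \NN(\phi) \|_{L^\infty_{\sigma, \xi}(\R)} \le C_1 \big( e \| \phi \|_{L^\infty(\R)}^2 + e \| \kappa \|_{L^\infty(\R)} \varepsilon \| \phi \|_{L^\infty(\R)} \big)$ and~$\| \NN(\phi) - \NN(\psi) \|_{L^\infty_{\sigma, \xi}(\R)} \le C_1 e \big( \| \phi \|_{L^\infty(\R)} + \| \psi \|_{L^\infty(\R)} + \| \kappa \|_{L^\infty(\R)} \varepsilon \big) \| \phi - \psi \|_{L^\infty(\R)}$, which are exactly~\eqref{Nbound} and~\eqref{NLipbound} with~$C_2$ depending on~$\mu_0$, $\sigma$ and~$\| \kappa \|_{L^\infty(\R)}$.

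I do not expect a serious obstacle here: once one notices that~$\E = - \varepsilon \kappa\, e^{\U_{\mu, \xi}}$, the statement is essentially bookkeeping. The only point demanding a little care is the uniformity in~$\xi$ and~$\mu$ in the weighted bound on~$e^{\U_{\mu, \xi}}$: uniformity in~$\xi$ is automatic since everything depends on~$x$ only through~$x - \xi$, while uniformity in~$\mu$ relies on the compactness of~$[1/\mu_0, \mu_0]$ together with the fact that the weight exponent~$1 + \sigma$ is strictly smaller than the decay rate~$2$ of~$e^{\U_{\mu, \xi}}$ at infinity — which is precisely why~$\sigma$ is taken in~$(0,1)$.
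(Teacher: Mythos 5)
Your proof is correct and follows essentially the same route as the paper's: both exploit the identity $\E = -\varepsilon\kappa\, e^{\U_{\mu,\xi}} = \frac{2\mu\varepsilon\kappa(x)}{\mu^2+(x-\xi)^2}$ to reduce all three estimates to a uniform weighted bound on $e^{\U_{\mu,\xi}}$ together with elementary Taylor/mean-value inequalities for $s \mapsto e^s$, and the paper merely leaves implicit the details you spell out (the uniform bound on $(1+t)^{1+\sigma}\frac{2\mu}{\mu^2+t^2}$, and the Lipschitz estimate it declares ``analogous''). One incidental remark: your calculation correctly shows that the constant in~\eqref{Ebound} should also carry a factor of $\|\kappa\|_{L^\infty(\R)}$, a dependence the lemma's statement omits for~$C_1$ but which is clearly intended.
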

\begin{proof}
As~$\U_{\mu, \xi}$ solves~\eqref{eqforU}, we see that
\begin{equation} \label{Ealternateform}
\E(x) = \frac{2 \mu \varepsilon \kappa(x)}{\mu^2 + (x - \xi)^2},
\end{equation}
for all~$x \in \R$. From this identity, estimate~\eqref{Ebound} follows at once. To obtain~\eqref{Nbound}, we observe that~$|e^\phi - 1| \le e^{|\phi|} |\phi|$ and~$|e^\phi - 1 - \phi| \le \frac{e^{|\phi|}}{2} |\phi|^2$ for every~$\phi \in \R$. Hence,
\begin{equation} \label{estimateNNphi}
|\NN(\phi)(x)| \le \frac{2 \mu}{\mu^2 + (x - \xi)^2} \bigg( \frac{e^{|\phi(x)|}}{2} \, |\phi(x)|^2 + \varepsilon \| \kappa \|_{L^\infty(\R)} e^{|\phi(x)|} |\phi(x)|  \bigg).
\end{equation}
In particular, if $\|\phi\|_{L^{\infty}(\R)} \leq 1$, we get 
$$
|\NN(\phi)(x)| \le \frac{8 \mu}{\mu^2 + (x - \xi)^2} \Big(  |\phi(x)|^2 + \varepsilon \| \kappa \|_{L^\infty(\R)} |\phi(x)|  \Big).
$$
which gives~\eqref{Nbound}. Estimate~\eqref{NLipbound} is analogous.
\end{proof}

\subsection{Linear theory} \label{subsectLT}

We study in this subsection the linearized operator~$L_{\mu, \xi}$. We begin by analyzing its kernel. Since~$\U_{\mu, \xi}$ is a solution of~\eqref{eqforU}, it is clear that both derivatives
\begin{equation} \label{Zdefs}
Z_{0, \mu, \xi}(x) := \partial_\mu \U_{\mu, \xi}(x) = \frac{1}{\mu} - \frac{2 \mu}{\mu^2 + (x - \xi)^2}, \quad Z_{1, \mu, \xi}(x) := \partial_\xi \U_{\mu, \xi}(x) = \frac{2 (x - \xi)}{\mu^2 + (x - \xi)^2}
\end{equation}
annihilate~$L_{\mu, \xi}$. We recall the following known non-degeneracy result, showing that these two functions actually span the bounded kernel of~$L_{\mu, \xi}$.

\begin{lemma}[\hspace{-0.01cm}\cite{DdM05,S19,CF22}] \label{nondeglem}
If~$Z \in L^\infty(\R)$ is a solution of~$L_{\mu, \xi} \, Z = 0$ in~$\R$, then~$Z$ is a linear combination of~$Z_{0, \mu, \xi}$ and~$Z_{1, \mu, \xi}$.
\end{lemma}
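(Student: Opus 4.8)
\textbf{Proof proposal for Lemma~\ref{nondeglem}.}

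The plan is to reduce the statement to the non-degeneracy of the bubble for the associated $2$D Liouville problem via the harmonic extension, and there invoke the classical classification of the kernel. First I would recall that, by the Caffarelli--Silvestre extension, a bounded function $Z$ on $\R$ satisfies $(-\Delta)^{1/2} Z = e^{\U_{\mu,\xi}} Z$ in $\R$ if and only if its (bounded) harmonic extension $W$ to $\R^2_+$ solves the mixed Neumann problem
\begin{equation*}
\begin{cases}
\Delta W = 0 & \text{in } \R^2_+,\\[1mm]
-\partial_\nu W = e^{\U_{\mu,\xi}} W & \text{on } \partial \R^2_+ = \R,
\end{cases}
\end{equation*}
where $\partial_\nu$ is the inward normal derivative and we identify $Z = W|_{\partial\R^2_+}$. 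By scaling and translation invariance it suffices to treat $\mu = 1$, $\xi = 0$; the general case follows by conjugating with the dilation $x \mapsto \mu^{-1}(x-\xi)$, under which $Z_{0,\mu,\xi}$ and $Z_{1,\mu,\xi}$ transform (up to constants) into $Z_{0,1,0}$ and $Z_{1,1,0}$.

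Next I would appeal to the known linear non-degeneracy of the half-harmonic bubble, which is precisely the content of the cited references \cite{DdM05,S19,CF22}: the space of bounded solutions of the extended problem is exactly two-dimensional, spanned by the extensions of $Z_{0,1,0}$ and $Z_{1,1,0}$. Two complementary routes are available to justify this. The first is to quote it directly from those sources, where it is proved by conformally mapping $\R^2_+$ to the disk $\D$, so that the bubble becomes the standard solution on $\S^1$ and the linearized operator becomes the Steklov-type eigenvalue problem whose first nontrivial eigenspace is spanned by the restrictions of the coordinate functions; the kernel elements then pull back to $Z_{0},Z_{1}$. The second route, if one wants to be self-contained, is to exploit that $e^{\U_{1,0}}(x) = \frac{2}{1+x^2}$, expand a bounded kernel element $Z$ in terms of the Poisson kernel and use the explicit form of the Green function for the half-Laplacian plus potential; but this is strictly longer than invoking the citation.

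The main obstacle, and the step deserving the most care, is controlling the behavior at infinity: the extension argument and the conformal map to the disk both require that ``bounded on $\R$'' translates correctly into the right function space on $\D$ (continuity up to the boundary, or at least membership in the natural energy/Poisson class), so that no spurious kernel elements appear and none are lost. Concretely one must check that a bounded $Z$ solving $L_{\mu,\xi} Z = 0$ has a harmonic extension that is genuinely bounded on $\overline{\R^2_+}$ (using interior elliptic estimates together with the decay of $e^{\U_{\mu,\xi}}$, which is integrable), and conversely that any bounded solution on the disk side restricts to a function of the required type. Once this functional-analytic bookkeeping is in place, the classification is immediate from the cited non-degeneracy results, so in the write-up I would state the reduction to $\mu=1,\xi=0$, record the equivalence with the extended/disk problem, and then simply cite \cite{DdM05,S19,CF22} for the conclusion.
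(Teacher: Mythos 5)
Your proposal is correct and matches the paper's handling of this statement: the paper gives no proof of Lemma~\ref{nondeglem} at all, simply citing \cite{DdM05,S19,CF22}, which is exactly what you ultimately do. The extra material you include---the reduction to $\mu=1,\xi=0$, the Caffarelli--Silvestre extension, the conformal map to the disk, and the bookkeeping about boundedness at infinity---is a reasonable and accurate sketch of how the cited references establish the result, but it is supplementary to the paper's own (citation-only) treatment rather than a divergence from it.
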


Let~$\bar{R} > 1$ be fixed and~$\chi \in C^\infty(\R)$ be an even cutoff function satisfying~$0 \le \chi \le 1$ in~$\R$,~$\chi = 1$ in~$[- \bar{R}, \bar{R}]$, and~$\chi = 0$ in~$\R \setminus (- \bar{R} - 1, \bar{R} + 1)$. We set~$\chi_\xi := \chi(\cdot - \xi)$. Also, we fix two numbers $\mu_0 \geq 1$ and $ \sigma \in (0,1)$. Finally, we consider the space of functions defined by
$$
L^\infty_{\sigma, \xi}(\R) := \Big\{ {g \in L^\infty_\loc(\R) : \| g \|_{L^\infty_{\sigma, \xi}(\R)} < +\infty} \Big\},
$$
with~$\| \cdot \|_{L^\infty_{\sigma, \xi}(\R)}$ as in~\eqref{Linftysigmanormdef}. The rest of this subsection is devoted to prove the following. 

\begin{proposition} \label{mainlinearprop}
Given any~$g \in L^\infty_{\sigma, \xi}(\R)$, there exists a unique bounded weak solution~$\phi$ of
\begin{equation} \label{projeq}
L_{\mu, \xi} \, \phi = g + \sum_{i = 0}^1 d_i \chi_\xi Z_{i, \mu, \xi} \quad \mbox{in } \R,
\end{equation}
for some uniquely determined~$d_0, d_1 \in \R$, satisfying the orthogonality conditions
\begin{equation} \label{ortcond}
\int_{\R} \phi \chi_\xi Z_{i, \mu, \xi} \, dx = 0 \quad \mbox{for } i = 0, 1.
\end{equation}
Moreover,
\begin{equation} \label{linearLinftyest}
\| \phi \|_{L^\infty(\R)} \le C_0 \| g \|_{L^\infty_{\sigma, \xi}(\R)},
\end{equation}
for some constant~$C_0 > 0$ depending only on~$\mu_0$,~$\sigma$, and~$\bar{R}$.
\end{proposition}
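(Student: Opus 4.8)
The heart of the statement is the a priori bound~\eqref{linearLinftyest}: once it is available, the existence of the triple~$(\phi;d_0,d_1)$ and its uniqueness follow by fairly standard arguments, as I explain at the end. I would establish~\eqref{linearLinftyest} through a blow-up (contradiction) argument, preceded by a short computation controlling the multipliers. \emph{Step 1 (the multipliers).} Let~$\phi$ be a bounded weak solution of~\eqref{projeq} with multipliers~$d_0,d_1$. Test~\eqref{projeq} against~$Z_{k,\mu,\xi}$, $k\in\{0,1\}$. Since~$g\in L^1(\R)$, $\chi_\xi Z_{i,\mu,\xi}$ is compactly supported, and~$e^{\U_{\mu,\xi}}\phi$ decays like~$|x-\xi|^{-2}$, one has~$(-\Delta)^{\frac12}\phi\in L^1(\R)$; also~$(-\Delta)^{\frac12}Z_{k,\mu,\xi}=e^{\U_{\mu,\xi}}Z_{k,\mu,\xi}\in L^1(\R)$ (differentiate~\eqref{eqforU}). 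Hence the integration by parts~$\int_\R((-\Delta)^{\frac12}\phi)\,Z_{k,\mu,\xi}\,dx=\int_\R\phi\,(-\Delta)^{\frac12}Z_{k,\mu,\xi}\,dx$ is legitimate (e.g.\ via the Caffarelli--Silvestre extension, or by truncation), and combined with~$L_{\mu,\xi}Z_{k,\mu,\xi}=0$ and the parity identity~$\int_\R\chi_\xi Z_{0,\mu,\xi}Z_{1,\mu,\xi}\,dx=0$ it reduces~\eqref{projeq} to~$d_k=-\big(\int_\R\chi_\xi Z_{k,\mu,\xi}^2\,dx\big)^{-1}\int_\R g\,Z_{k,\mu,\xi}\,dx$. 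Since~$Z_{k,\mu,\xi}$ is bounded and~$\int_\R\chi_\xi Z_{k,\mu,\xi}^2\,dx\ge c(\mu_0,\bar R)>0$, this gives~$|d_0|+|d_1|\le C\|g\|_{L^\infty_{\sigma,\xi}(\R)}$; in particular~$d_0=d_1=0$ when~$g\equiv0$.

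\textbf{Step 2 (compactness of the blow-up).} Assume~\eqref{linearLinftyest} fails: there exist~$\mu_n\in[\tfrac1{\mu_0},\mu_0]$, $\xi_n\in\R$, functions~$g_n$, and bounded weak solutions~$\phi_n$ of the corresponding problem~\eqref{projeq}--\eqref{ortcond} with~$\|\phi_n\|_{L^\infty(\R)}=1$ and~$\|g_n\|_{L^\infty_{\sigma,\xi_n}(\R)}\to0$. Translating we may take~$\xi_n=0$, and along a subsequence~$\mu_n\to\bar\mu\in[\tfrac1{\mu_0},\mu_0]$; by Step~1, $d_0^n,d_1^n\to0$. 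As~$(-\Delta)^{\frac12}\phi_n=e^{\U_{\mu_n,0}}\phi_n+g_n+\sum_i d_i^n\chi Z_{i,\mu_n,0}$ is uniformly bounded, interior regularity for~$(-\Delta)^{\frac12}$ yields a uniform~$C^\alpha_{\loc}$ bound on~$\phi_n$; hence, along a subsequence, $\phi_n\to\phi_\infty$ in~$C_{\loc}(\R)$, with~$\phi_\infty\in L^\infty(\R)$, $L_{\bar\mu,0}\phi_\infty=0$ in~$\R$, and (passing to the limit in~\eqref{ortcond}) $\int_\R\phi_\infty\chi Z_{i,\bar\mu,0}\,dx=0$. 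By Lemma~\ref{nondeglem}, $\phi_\infty=a_0 Z_{0,\bar\mu,0}+a_1 Z_{1,\bar\mu,0}$, and the orthogonality conditions together with the parity identity force~$a_0=a_1=0$, so~$\phi_\infty\equiv0$.

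\textbf{Step 3 (no loss of mass at infinity) — the delicate point.} By Step~2, $\phi_n\to0$ uniformly on compact sets, so~$\|\phi_n\|_{L^\infty(\R)}=1$ could only be sustained by mass escaping to infinity. To exclude this I would use the representation of~$\phi_n$ through the (logarithmic) fundamental solution of~$(-\Delta)^{\frac12}$ on~$\R$. Set~$F_n:=e^{\U_{\mu_n,0}}\phi_n+g_n+\sum_i d_i^n\chi Z_{i,\mu_n,0}=(-\Delta)^{\frac12}\phi_n$. Then~$|F_n(x)|\le C(1+|x|)^{-1-\sigma}$ uniformly in~$n$ (here~$\sigma<1$ is used so that the~$|x|^{-2}$ decay of~$e^{\U_{\mu_n,0}}\phi_n$ is subordinate), and, crucially, $\int_\R F_n\,dx=0$ (the symbol of~$(-\Delta)^{\frac12}$ vanishes at the origin; equivalently, antisymmetry of the principal-value kernel). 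Consequently~$\phi_n$ equals, up to an additive constant~$c_n$, the function~$\psi_n(x):=-\tfrac1\pi\int_\R\big(\log|x-y|-\log|x|\big)F_n(y)\,dy$ (a bounded~$\tfrac12$-harmonic remainder is constant, by Liouville). Because the leading logarithm cancels thanks to~$\int_\R F_n=0$, a direct estimate gives~$|\psi_n(x)|\le C(1+|x|)^{-\sigma}\log(2+|x|)$ with~$C$ depending only on~$\mu_0,\sigma,\bar R$; and since~$F_n\to0$ in~$C_{\loc}$ with the above uniform domination, dominated convergence plus interior estimates give~$\psi_n\to0$ in~$C_{\loc}$, hence~$\|\psi_n\|_{L^\infty(\R)}\to0$ and then~$c_n=\phi_n(0)-\psi_n(0)\to0$. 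Therefore~$\|\phi_n\|_{L^\infty(\R)}\le\|\psi_n\|_{L^\infty(\R)}+|c_n|\to0$, contradicting~$\|\phi_n\|_{L^\infty(\R)}=1$; this proves~\eqref{linearLinftyest}. I expect this step to be the main obstacle: unlike in dimension~$\ge2$ (or for~$(-\Delta)^s$ in the regime~$n>2s$), on~$\R$ the half-Laplacian annihilates constants and has a logarithmically growing fundamental solution, so one cannot simply compare with a power-type supersolution, and the required decay must be extracted from the cancellation~$\int_\R(-\Delta)^{\frac12}\phi_n\,dx=0$.

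\textbf{Step 4 (existence and uniqueness).} Uniqueness is immediate from Step~1: if~$(\phi_1;d^{(1)})$ and~$(\phi_2;d^{(2)})$ both solve~\eqref{projeq}--\eqref{ortcond}, then~$\phi_1-\phi_2$ solves the projected problem with~$g\equiv0$, whence~$d^{(1)}=d^{(2)}$ by Step~1 and~$\phi_1=\phi_2$ by~\eqref{linearLinftyest}. For existence, with~\eqref{linearLinftyest} at hand I would argue by approximation: solve the projected Dirichlet problem on balls~$B_R$ (the kernel being trivial by the a priori estimate applied on~$B_R$, solvability following from the Fredholm alternative in~$H^{\frac12}(B_R)$), obtain solutions~$\phi_R$ with~$\|\phi_R\|_{L^\infty}$ and~$|d_i^R|$ bounded by~$C\|g\|_{L^\infty_{\sigma,\xi}(\R)}$ uniformly in~$R$, and pass to the limit~$R\to\infty$ using the compactness in Step~2; the limit solves~\eqref{projeq}--\eqref{ortcond} on~$\R$, inherits~\eqref{linearLinftyest}, and is a bona fide bounded weak solution by the regularity theory for~$(-\Delta)^{\frac12}$.
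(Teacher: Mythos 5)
Your route is genuinely different from the paper's, so let me first compare. You control the multipliers by testing directly against $Z_{k,\mu,\xi}$ (Step~1) and rule out loss of mass at infinity via a representation formula built on the cancellation $\int_\R(-\Delta)^{\frac12}\phi_n\,dx=0$ (Step~3). The paper does neither. It constructs compactly supported approximants $\tilde z_0,\tilde z_1$ of $Z_0,Z_1$ (Lemma~\ref{2funclem}), with quantitative smallness of $L_{\mu,0}\tilde z_i$, precisely so that the testing in Lemma~\ref{existinboundedlem} is rigorous; and it rules out mass loss with the barrier $A(2-w_\sigma)$, $w_\sigma(x)=(1+x^2)^{-\sigma/2}$, using the fact---from \cite{CF22}---that $(-\Delta)^{\frac12}w_\sigma(x)\le -c_0|x|^{-1-\sigma}$ for $|x|$ large. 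Your assertion that ``one cannot simply compare with a power-type supersolution'' on $\R$ is thus incorrect: the issue is real, but the paper resolves it with exactly such a comparison, once $w_\sigma$ is in hand. Moreover, the paper proves the a priori estimate on bounded intervals first (Lemma~\ref{apriorilem}), uniformly in the radius $R$, then deduces existence on $(-R,R)$ by Fredholm, and only then passes to $R\to\infty$ to get Proposition~\ref{mainlinearprop}.

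Two of your steps carry genuine gaps. (i) In Step~1, testing~\eqref{projeq} against $Z_{0,\mu,\xi}$ is not a routine integration by parts: $Z_0$ is bounded but does not tend to $0$ at infinity ($Z_0\to\mu^{-1}$), so neither $\langle\phi,Z_0\rangle_{\dot H^{1/2}}$ nor the claimed identity is available for free; the passage to the limit has to be controlled quantitatively, which is exactly the content of Lemma~\ref{2funclem} and its use in Lemma~\ref{existinboundedlem}. (ii) In Step~3, the cancellation $\int_\R(-\Delta)^{\frac12}\phi_n\,dx=0$ does not follow from ``antisymmetry of the PV kernel'': for bounded $\phi_n$ the double integral $\iint|\phi_n(x)-\phi_n(y)|\,|x-y|^{-2}\,dx\,dy$ is not absolutely convergent, so the symmetrization argument is illegal. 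The statement is true, but its proof is a nontrivial lemma (for instance, a nonzero mean of $(-\Delta)^{\frac12}\phi_n$ would force a $\log y$ divergence in the harmonic extension, contradicting the boundedness of $\phi_n$). Finally, Step~4 is circular as written: the a priori estimate you establish in Steps~1--3 concerns the equation posed on all of $\R$, whereas the Fredholm-alternative existence argument requires the corresponding estimate for the Dirichlet problems on $B_R$, uniformly in $R$. Your Step~3 mechanism does not transfer, because for the truncated problem the identity $L_{\mu,\xi}\phi=g+\sum_i d_i\chi_\xi Z_{i,\mu,\xi}$ only holds in $(-R,R)$, so the decay and structure of $F_n=(-\Delta)^{\frac12}\phi_n$ outside that interval are different. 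This is exactly the reason the paper establishes Lemma~\ref{apriorilem} on $(-R,R)$ first, and lets both the existence and the full-line bound flow from it.
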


Clearly, after a translation we may reduce to the case~$\xi = 0$. In the remaining of the subsection we will always assume this to hold.

To prove Proposition~\ref{mainlinearprop}, we begin with the following a priori estimate.

\begin{lemma} \label{apriorilem}
There exist two constants~$R_0 > 0$ and~$C \ge 1$, depending only on~$\mu_0$,~$\sigma$, and~$\bar{R}$ such that the following holds true. If~$R \ge R_0$ and~$\phi \in L^\infty(\R)$ is a weak solution of
$$
\begin{cases}
L_{\mu, 0} \, \phi = g & \quad \mbox{in } (- R, R), \\
\phi = 0 & \quad \mbox{in } \R \setminus (- R, R),
\end{cases}
$$
satisfying
\begin{equation} \label{ortcondxi=0}
\int_\R \phi \chi Z_{i, \mu, 0} \, dx = 0 \quad \mbox{for } i = 0, 1,
\end{equation}
with~$\mu \in \left[ \frac{1}{\mu_0}, \mu_0 \right]$ and~$g \in L^\infty_{\sigma, 0}(\R)$, then it holds
$$
\| \phi \|_{L^\infty(\R)} \le C \| g \|_{L^\infty_{\sigma, 0}(\R)}.
$$
\end{lemma}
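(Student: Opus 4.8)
\emph{Proof strategy.} The plan is a contradiction/compactness (``blow-up'') argument. Suppose the statement fails. Then, taking $R_0 = C = n$, we obtain sequences $R_n \ge n$, $\mu_n \in \left[ \tfrac{1}{\mu_0}, \mu_0 \right]$, $g_n \in L^\infty_{\sigma, 0}(\R)$, and weak solutions $\phi_n \in L^\infty(\R)$ of $L_{\mu_n, 0} \, \phi_n = g_n$ in $(-R_n, R_n)$ with $\phi_n \equiv 0$ outside $(-R_n,R_n)$, satisfying~\eqref{ortcondxi=0} with $\mu = \mu_n$, and such that $\| \phi_n \|_{L^\infty(\R)} > n \, \| g_n \|_{L^\infty_{\sigma, 0}(\R)}$. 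After normalizing so that $\| \phi_n \|_{L^\infty(\R)} = 1$, we have $G_n := \| g_n \|_{L^\infty_{\sigma, 0}(\R)} \to 0$. Passing to a subsequence, $R_n \to \infty$ and $\mu_n \to \mu_\infty \in \left[ \tfrac{1}{\mu_0}, \mu_0 \right]$. The goal is to produce a nontrivial bounded element of the kernel of $L_{\mu_\infty, 0}$ satisfying the limiting orthogonality conditions, contradicting Lemma~\ref{nondeglem}.

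The crucial step is a uniform decay estimate ensuring that the $L^\infty$ norm of $\phi_n$ is essentially attained on a fixed compact interval. Rewrite the equation as $(-\Delta)^{\frac12} \phi_n = h_n$ in $(-R_n, R_n)$, where $h_n := e^{\U_{\mu_n, 0}} \phi_n + g_n$. Since $e^{\U_{\mu_n, 0}}(x) = \frac{2\mu_n}{\mu_n^2 + x^2}$ and $\mu_n \in \left[ \tfrac{1}{\mu_0}, \mu_0 \right]$, the decay of this kernel together with $\| \phi_n \|_{L^\infty(\R)} = 1$ gives $\| h_n \|_{L^\infty_{\sigma, 0}(\R)} \le C_\sharp (1 + G_n) \le 2 C_\sharp$ with $C_\sharp = C_\sharp(\mu_0, \sigma)$, and also $\| h_n \|_{L^\infty(\R)} \le 2\mu_0 + 1$. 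Now I would use a barrier $\Phi \in C(\R) \cap L^\infty(\R)$, $\Phi > 0$, with $\Phi(x) \sim |x|^{-\sigma}$ as $|x| \to \infty$ and $(-\Delta)^{\frac12} \Phi(x) \ge c_\sigma (1 + |x|)^{-1-\sigma}$ for $|x|$ large --- such barriers are classical for the fractional Laplacian (one may take a smoothed version of $\min\{1, |x|^{-\sigma}\}$), and here is exactly where $\sigma \in (0,1)$ is used: $\sigma < 1$ keeps $(1+|x|)^{-\sigma}$ bounded with the right sign under $(-\Delta)^{\frac12}$, while $\sigma > 0$ forces decay. Set $M_n := \| \phi_n \|_{L^\infty([-R_*, R_*])}$ and $w := \frac{2 C_\sharp}{c_\sigma} \Phi + M_n - \phi_n$. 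Choosing $R_*$ large (depending only on $\mu_0$ and $\sigma$) so that the barrier inequality holds for $|x| \ge R_*$, one checks that $(-\Delta)^{\frac12} w = \frac{2 C_\sharp}{c_\sigma} (-\Delta)^{\frac12} \Phi - h_n \ge 0$ on $(-R_n, R_n) \setminus [-R_*, R_*]$, while $w \ge 0$ on the complementary set (on $[-R_*,R_*]$ because $\phi_n \le M_n$ there, and outside $(-R_n,R_n)$ because $\phi_n = 0$). The maximum principle for $(-\Delta)^{\frac12}$ (no zeroth-order term) then yields $w \ge 0$ in $\R$, and likewise with $-\phi_n$, so $|\phi_n(x)| \le \frac{2 C_\sharp}{c_\sigma} \Phi(x) + M_n$ for all $x$. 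Enlarging $R_*$ (still depending only on $\mu_0, \sigma$) so that $\frac{2 C_\sharp}{c_\sigma} \Phi \le \tfrac14$ outside $[-R_*, R_*]$, we get $\| \phi_n \|_{L^\infty(\R)} \le \tfrac14 + M_n$; since $\| \phi_n \|_{L^\infty(\R)} = 1$, this forces $M_n \ge \tfrac34$ for all $n$.

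The rest is soft. Since $\| h_n \|_{L^\infty(\R)}$ is bounded and $\| \phi_n \|_{L^\infty(\R)} = 1$, standard interior regularity for $(-\Delta)^{\frac12}$ gives a uniform $C^{0,\alpha}$ bound for $\phi_n$ on compact sets; by Arzel\`a--Ascoli and a diagonal argument, along a further subsequence $\phi_n \to \phi_\infty$ locally uniformly in $\R$, with $\| \phi_\infty \|_{L^\infty(\R)} \le 1$ and $\| \phi_\infty \|_{L^\infty([-R_*, R_*])} = \lim_n M_n \ge \tfrac34$, so $\phi_\infty \not\equiv 0$. Passing to the limit in the weak formulation (the locally uniform convergence together with the uniform bound $|\phi_n| \le 1$ justifies passing the limit inside the nonlocal operator against test functions, using $R_n \to \infty$ and $G_n \to 0$), $\phi_\infty$ is a bounded weak solution of $L_{\mu_\infty, 0} \, \phi_\infty = 0$ in all of $\R$; by Lemma~\ref{nondeglem}, $\phi_\infty = a \, Z_{0, \mu_\infty, 0} + b \, Z_{1, \mu_\infty, 0}$ for some $a, b \in \R$. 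Finally, the orthogonality conditions~\eqref{ortcondxi=0} pass to the limit --- $\chi$ is compactly supported and $Z_{i, \mu_n, 0} \to Z_{i, \mu_\infty, 0}$ uniformly there --- so $\int_\R \phi_\infty \, \chi \, Z_{i, \mu_\infty, 0} \, dx = 0$ for $i = 0, 1$. Since $Z_{0, \mu, 0}$ is even, $Z_{1, \mu, 0}$ is odd, and $\chi$ is even, the Gram matrix $\big( \int_\R \chi \, Z_{i, \mu_\infty, 0} Z_{j, \mu_\infty, 0} \, dx \big)_{i, j}$ is diagonal with strictly positive diagonal entries, hence invertible; therefore $a = b = 0$ and $\phi_\infty \equiv 0$, a contradiction.

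I expect the main obstacle to be the uniform tail estimate of the second paragraph: constructing the barrier $\Phi$ and applying the maximum principle for the fractional operator on the exterior region (whose size grows with $n$), so as to control $\| \phi_n \|_{L^\infty(\R)}$, up to a harmless $\tfrac14$, by its norm on the fixed interval $[-R_*, R_*]$. This is precisely the point that dictates working with the weighted norm $\| \cdot \|_{L^\infty_{\sigma, 0}(\R)}$ and the weight $(1 + |x|)^{1+\sigma}$ with $\sigma \in (0,1)$. Everything else --- interior regularity, compactness, the non-degeneracy of Lemma~\ref{nondeglem}, and the symmetry of the Gram matrix --- is routine.
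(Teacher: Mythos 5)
Your overall strategy is exactly the paper's: contradiction, normalization $\|\phi_n\|_{L^\infty}=1$, a barrier argument to pin the $L^\infty$ norm to a fixed compact interval, compactness via interior H\"older estimates, and conclusion via the non-degeneracy of Lemma~\ref{nondeglem} together with the orthogonality conditions. The one genuinely different choice you make is to move $e^{\U_{\mu_n,0}}\phi_n$ to the right-hand side and invoke the maximum principle for the \emph{pure} operator $(-\Delta)^{1/2}$ on the exterior region, rather than citing a maximum principle for $L_{\mu,0}$ far from the origin as the paper does (referring to~\cite[Lemma~3.5]{CF22}). That is a reasonable and in fact tidier route — but you have a sign error in the barrier construction that breaks the argument as written.

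The barrier $\Phi$ you describe does not exist. If $\Phi>0$ and $\Phi(x)\sim|x|^{-\sigma}$ as $|x|\to\infty$, then $(-\Delta)^{1/2}\Phi(x)<0$ for $|x|$ large, not $\ge c_\sigma(1+|x|)^{-1-\sigma}$: writing $(-\Delta)^{1/2}\Phi(x)=\frac1\pi\,\mathrm{PV}\!\int_\R\frac{\Phi(x)-\Phi(y)}{(x-y)^2}\,dy$, the dominant contribution at large $|x|$ comes from $|y|$ small, where $\Phi(y)\gg\Phi(x)$ and the integrand is negative. This is precisely why the paper's auxiliary function $w_\sigma(x)=(1+x^2)^{-\sigma/2}$ satisfies $(-\Delta)^{1/2}w_\sigma(x)\le -c_0/|x|^{1+\sigma}$ — note the \emph{negative} sign — and why the supersolution actually used is $2-w_\sigma$, which is pinned between $1$ and $2$ and does \emph{not} decay. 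With a bounded (non-decaying) $\Phi$, your final step ``enlarging $R_*$ so that $\frac{2C_\sharp}{c_\sigma}\Phi\le\frac14$ outside $[-R_*,R_*]$'' is unavailable, and so the inequality $\|\phi_n\|_{L^\infty(\R)}\le\frac14+M_n$ is unjustified.

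The argument is easily repaired in either of two ways. (i) Follow the paper: with $\Phi=2-w_\sigma$ bounded by $2$, the comparison gives only $\|\phi_n\|_{L^\infty(\R)}\le C\big(\|\phi_n\|_{L^\infty([-R_*,R_*])}+G_n\big)$, which, combined with $\|\phi_n\|_{L^\infty(\R)}=1$ and $G_n\to0$, still yields $M_n\ge\frac{1}{2C}$ for $n$ large — entirely sufficient for your compactness step. (ii) Keep your cleaner conclusion $\|\phi_n\|_{L^\infty(\R)}\le\frac14+M_n$ by exploiting that the weighted norm of $h_n$ is small \emph{on the exterior}: since $e^{\U_{\mu_n,0}}(x)\le\frac{2\mu_0}{x^2}$ decays strictly faster than $(1+|x|)^{-1-\sigma}$ (this is where $\sigma<1$ enters), one has $\sup_{|x|\ge R_*}(1+|x|)^{1+\sigma}|h_n(x)|\le\eta$ for any prescribed $\eta>0$, once $R_*$ is large (depending only on $\mu_0,\sigma,\eta$) and $n$ large; then the bounded barrier $\frac{2\eta}{c_\sigma}(2-w_\sigma)+M_n$ gives $\|\phi_n\|_{L^\infty(\R)}\le\frac{4\eta}{c_\sigma}+M_n$, and choosing $\eta<\frac{c_\sigma}{16}$ recovers your $\frac14$. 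Everything after this point in your proof is correct, and the Gram-matrix observation at the end is a nice way to make the paper's ``the orthogonality conditions yield $A_0=A_1=0$'' explicit.
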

\begin{proof}
We argue by contradiction and assume that there exists a sequence~$\{ R_k \}$ of positive numbers such that~$R_k \rightarrow +\infty$ as~$k \rightarrow +\infty$ and for which the following holds. For every~$k \in \N$, there exists~$\mu_k \in \left[\frac{1}{\mu_0}, \mu_0 \right]$, a function~$g_k \in L^\infty_{\sigma, 0}(\R)$, and a bounded solution~$\phi_k$ of
\begin{equation} \label{systforphik}
\begin{cases}
L_{\mu_k, 0} \, \phi_k = g_k & \quad \mbox{in } (- R_k, R_k), \\
\phi_k = 0 & \quad \mbox{in } \R \setminus (- R_k, R_k),
\end{cases}
\end{equation}
satisfying
\begin{equation} \label{ortcondforphik}
\int_{\R} \phi_k \chi Z_{i, \mu_k, 0} \, dx = 0 \quad \mbox{for } i = 0, 1,
\end{equation}
but for which
$$
\| \phi_k \|_{L^\infty(\R)} > k \| g_k \|_{L^\infty_{\sigma, 0}(\R)}.
$$
By the linearity of the problem, we may further assume that
\begin{equation} \label{phikgksizes}
\| \phi_k \|_{L^\infty(\R)} = 1 \quad \mbox{and} \quad \| g_k \|_{L^\infty_{\sigma, 0}(\R)} \le \frac{1}{k}.
\end{equation}

Let~$\rho \ge 1$ be fixed. We claim that
\begin{equation} \label{claimbarrier}
\| \phi_k \|_{L^\infty(\R)} \le C \left( \| \phi_k \|_{L^\infty \left( (-\rho, \rho) \right)} + \| g_k \|_{L^\infty_{\sigma, 0}(\R)} \right),
\end{equation}
for some constant~$C > 0$ independent of~$k$ and provided~$\rho$ is sufficiently large but also independent of~$k$. This follows from a barrier argument. Consider the function~$w_\sigma(x) := (1 + x^2)^{- \frac{\sigma}{2}}$ for~$x \in \R$. According to~\cite[Lemma~3.4]{CF22}, there exist two constants~$c_0 \in (0, 1)$ and~$\rho_0 \ge 1$, depending only on~$\sigma$, such that
$$
(-\Delta)^{\frac{1}{2}} w_\sigma(x) \le - \frac{c_0}{|x|^{1 + \sigma}} \quad \mbox{for all } x \in \R \setminus [-\rho_0, \rho_0].
$$
From this, it is easy to see that the function
$$
\overline{\phi}_k := \left( \| \phi_k \|_{L^\infty \left( (-\rho, \rho) \right)} + \frac{2}{c_0} \, \| g_k \|_{L^\infty_{\sigma, 0}(\R)} \right) \left( 2 - w_\sigma \right)
$$
satisfies
$$
\begin{cases}
L_{\mu_k, 0} \, \overline{\phi}_k \ge g_k & \quad \mbox{in } (- R_k, R_k) \setminus [-\rho, \rho], \\
\overline{\phi}_k \ge \phi_k & \quad \mbox{in } [- \rho, \rho] \cup \big( {\R \setminus (-R_k, R_k)} \big),
\end{cases}
$$
provided~$\rho \ge \rho_0$ is sufficiently large, in dependence of~$\mu_0$ and~$\sigma$ only. Since the operator~$L_{\mu_k, 0}$ enjoys the maximum principle outside of a sufficiently large neighborhood of the origin (see~\cite[Lemma~3.5]{CF22}), up to taking~$\rho$ even larger we conclude that~$\phi_k \le \overline{\phi}_k$ in the whole~$\R$. Since an analogous argument could be made to obtain a lower bound for~$\phi_k$ (using~$- \overline{\phi}_k$ as subsolution), we conclude that
$$
|\phi_k(x)| \le \overline{\phi}_k(x) \le \frac{4}{c_0} \left( \| \phi_k \|_{L^\infty \left( (-\rho, \rho) \right)} + \| g_k \|_{L^\infty_{\sigma, 0}(\R)} \right) \quad \mbox{for all } x \in \R
$$
which is~\eqref{claimbarrier}.

From~\eqref{claimbarrier} and~\eqref{phikgksizes}, we deduce that
$$
\| \phi_k \|_{L^\infty \left( (-\rho, \rho) \right)} \ge 2 \delta,
$$
for all~$k$'s sufficiently large and for some~$\delta > 0$ independent of~$k$. Hence, for any large~$k$ there exists a point~$p_k \in (- \rho, \rho)$ at which~$|\phi_k(p_k)| \ge \delta$. Notice that, by elliptic regularity, the~$\phi_k$'s are continuous functions satisfying, for any fixed~$r > 0$,
$$
\| \phi_k \|_{C^\alpha([- r, r])} \le C
$$
for any~$\alpha \in (0, 1)$ and some constant~$C > 0$ independent of~$k$ and for~$k$ sufficiently large (in particular, large enough to have that~$R_k \ge r + 1$). By compactness and a diagonal argument, we conclude that, up to a subsequence,~$\{ \phi_k \}$ converges to a function~$\phi_\infty$ in~$C^{\alpha'}_\loc(\R)$, for any~$\alpha' \in (0, 1)$. Up to taking a further subsequence, we clearly also have that~$p_k \rightarrow p_\infty \in [- \rho, \rho]$ and~$\mu_k \rightarrow \mu_\infty \in \left[ \frac{1}{\mu_0}, \mu_0 \right]$. Recalling~\eqref{systforphik},~\eqref{ortcondforphik}, and~\eqref{phikgksizes}, we see that~$\phi_\infty \in L^\infty(\R)$ is such that
$$
\begin{dcases}
L_{\mu_\infty, 0} \, \phi_\infty = 0 & \quad \mbox{in } \R, \\
\int_{\R} \phi_\infty \chi Z_{i, \mu_\infty, 0} \, dx = 0 & \quad \mbox{for } i = 0, 1, \\
|\phi_\infty(p)| \ge \delta.
\end{dcases}
$$
By Lemma~\ref{nondeglem}, we conclude that
$$
\phi_\infty = A_0 Z_{0, \mu_\infty, 0} + A_1 Z_{1, \mu_\infty, 0} \quad \mbox{in } \R,
$$
for some constants~$A_0, A_1 \in \R$. But then, the orthogonality conditions yield~$A_0 = A_1 = 0$ and thus that~$\phi_\infty = 0$ in~$\R$, contradicting the fact that~$|\phi_\infty(p)| \ge \delta$. The proof is complete.
\end{proof}

Thanks to the previous a priori estimate, we may now solve the Dirichlet problem in bounded intervals. We begin with the following lemma, which provides to useful auxiliary functions modeled upon~$Z_{0, \mu, 0}$ and~$Z_{1, \mu, 0}$.

\begin{lemma} \label{2funclem}
Given~$\mu_0 \ge 1$, there exist two constants~$R_1 > 2 \mu_0$ and~$C \ge 1$, depending only on~$\mu_0$, such that the following holds true. For every~$r \ge R_1$ and~$\mu \in \left[ \frac{1}{\mu_0}, \mu_0 \right]$, there exist two functions~$\tilde{z}_0, \tilde{z}_1 \in C^\infty(\R)$ supported in~$[- 2 r, 2 r]$, with~$\tilde{z}_0$ even and~$\tilde{z}_1$ odd, satisfying
\begin{align*}
\tilde{z}_0 & = Z_{0, \mu, 0} \,\, \mbox{ in } [- 2 \mu_0, 2 \mu_0], && 0 \le \tilde{z}_0 \le Z_{0, \mu, 0} \,\, \mbox{ in } [-2 r, 2 r] \setminus [- 2 \mu_0, 2 \mu_0], \\
\tilde{z}_1 & = Z_{1, \mu, 0} \,\, \mbox{ in } [- r, r], && 0 \le \tilde{z}_1 \le Z_{1, \mu, 0} \,\, \mbox{ in } [r,2r],
\end{align*}
and
\begin{align*}
|L_{\mu, 0} \, \tilde{z}_0(x)| & \le \frac{C}{\log r} \left( \frac{\log (2 + |x|)}{(1 + |x|)^2} + \frac{r}{(r + |x|)^2} \right) & \mbox{for all } x \in \R, \\
|L_{\mu, 0} \, \tilde{z}_1(x)| & \le \frac{C}{(1 + |x|)(r + |x|)} & \mbox{for all } x \in \R.
\end{align*}
Moreover,~$\tilde{z}_0 \rightarrow Z_{0, \mu, 0}$ and~$\tilde{z}_{1} \rightarrow Z_{1, \mu, 0}$ uniformly on compact sets as~$r \rightarrow +\infty$.
\end{lemma}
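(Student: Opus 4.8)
\emph{Proof proposal.} The plan is to realise $\tilde z_0$ and $\tilde z_1$ as cut-offs of the kernel elements $Z_{0,\mu,0}$, $Z_{1,\mu,0}$ from~\eqref{Zdefs}, the only genuine subtlety being that $Z_{0,\mu,0}$ does not decay at infinity (it tends to $1/\mu$), so a mere cut-off at scale $r$ would produce an $O(1/r)$ error; instead one uses a \emph{logarithmic} cut-off, spread over the $\sim\log r$ dyadic scales between a fixed radius and $r$, which is precisely what generates the gain $1/\log r$. Concretely, for $\tilde z_1$ pick an even $\eta_r\in C^\infty(\R)$ with $\eta_r=1$ on $[-r,r]$, $\eta_r=0$ off $[-2r,2r]$, $0\le\eta_r\le1$, $|\eta_r'|\lesssim 1/r$, $|\eta_r''|\lesssim 1/r^2$, and set $\tilde z_1:=\eta_r\,Z_{1,\mu,0}$; since $Z_{1,\mu,0}$ is odd and positive on $(0,+\infty)$, $\tilde z_1$ is odd and satisfies the claimed pointwise bounds. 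For $\tilde z_0$ choose an even $\zeta_r\in C^\infty(\R)$ with $\zeta_r=1$ on $[-2\mu_0,2\mu_0]$, $\zeta_r=0$ off $[-2r,2r]$, $0\le\zeta_r\le1$, agreeing on $2\mu_0\le|x|\le 2r$ with (a smoothing of) the affine-in-$\log$ profile $\log(2r/|x|)/\log(r/\mu_0)$, so that $|\zeta_r'(x)|\lesssim \frac{1}{(1+|x|)\log r}$ and $|\zeta_r''(x)|\lesssim \frac{1}{(1+|x|)^2\log r}$ on the transition set; put $\tilde z_0:=\zeta_r\,Z_{0,\mu,0}$. On $[-2r,2r]\setminus[-2\mu_0,2\mu_0]$ one has $|x|>2\mu_0\ge2\mu$, hence $Z_{0,\mu,0}(x)=\tfrac1\mu-\tfrac{2\mu}{\mu^2+x^2}>0$, giving $0\le\tilde z_0\le Z_{0,\mu,0}$; evenness and the support property are clear, and $\tilde z_i\to Z_{i,\mu,0}$ uniformly on compacta since $\eta_r\to1$ locally uniformly and $\zeta_r(x)=\log(2r/|x|)/\log(r/\mu_0)\to1$ for each fixed $x$.

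For the error estimates, note that $L_{\mu,0}Z_{i,\mu,0}=0$, so multiplying a kernel element by a function $\theta$ leaves only a commutator term:
\[ L_{\mu,0}(\theta Z_{i,\mu,0})(x)=\frac1\pi\,\PV\!\int_\R\frac{\big(\theta(x)-\theta(y)\big)Z_{i,\mu,0}(y)}{(x-y)^2}\,dy. \]
For $\tilde z_1$ this is applied directly with $\theta=\eta_r$. For $\tilde z_0$ it is convenient to use $Z_{0,\mu,0}=\tfrac1\mu-e^{\U_{\mu,0}}$, which splits the error as
\[ L_{\mu,0}\tilde z_0(x)=\frac1\mu\,(-\Delta)^{\frac12}\zeta_r(x)-\frac1\pi\,\PV\!\int_\R\frac{\big(\zeta_r(x)-\zeta_r(y)\big)e^{\U_{\mu,0}(y)}}{(x-y)^2}\,dy. \]

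Each term is bounded by splitting $\R$ into a near-diagonal zone $\{|x-y|\le\tfrac12(1+|x|)\}$, an intermediate zone, and a far zone. In the near-diagonal zone one subtracts the first-order Taylor term $\theta'(x)(x-y)$, which drops out against the odd and symmetrically truncated kernel $(x-y)^{-1}$, and controls the rest by $\|\theta''\|_{L^\infty(\mathrm{loc})}$ times the local $L^1$ mass of $Z_{i,\mu,0}$ (resp. of $e^{\U_{\mu,0}}$). In the far zone one uses $|\theta(x)-\theta(y)|\le1$, the support of $\theta$ in $[-2r,2r]$, and the decay $|Z_{1,\mu,0}(y)|\lesssim\min(1,|y|^{-1})$, $e^{\U_{\mu,0}(y)}\lesssim(1+|y|)^{-2}$, exploiting once more the oddness of $Z_{1,\mu,0}$ to gain an extra power in the $\tilde z_1$ bound. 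The logarithmic flatness of $\zeta_r$ forces the overall factor $1/\log r$ for $\tilde z_0$; the two summands of its bound record, respectively, the inner edge of $\zeta_r$ at the fixed scale $\mu_0$ together with the logarithmic accumulation $\int_1^{|x|}\log(|x|/|y|)\,|y|^{-2}\,dy\lesssim\log|x|$ of cut-off differences against $e^{\U_{\mu,0}}$ (this is where $\log(2+|x|)$ enters), and the outer edge of $\zeta_r$ at scale $r$; moreover $(-\Delta)^{\frac12}\zeta_r$ vanishes wherever $\zeta_r$ is affine in $\log|x|$, since $\log|x|$ is $(-\Delta)^{\frac12}$-harmonic away from the origin, so only those two edges contribute.

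The main obstacle is extracting the \emph{sharp decay in $|x|$} in these commutator bounds: crude absolute-value estimates lose a power of $|x|$, because $Z_{0,\mu,0}$ is merely bounded over a set of measure $\sim|x|$, so one must genuinely use the principal-value cancellation and the parity of $Z_{0,\mu,0},Z_{1,\mu,0}$, and carefully track the interplay between the logarithmic cut-off and the $(1+|y|)^{-2}$ decay of $e^{\U_{\mu,0}}$. No maximum principle is needed here; the argument is a somewhat lengthy bookkeeping of commutator integrals, parallel to the analogous construction in~\cite{CF22}.
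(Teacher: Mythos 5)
The paper does not actually prove this lemma; it simply imports the two functions from \cite[Lemmas 3.6 and 3.8]{CF22}, matching parameters ($\delta_0=\tfrac12$, $\varepsilon=\tfrac1{12r}$, $R=2\mu_0$) and extracting the pointwise bounds on $L_{\mu,0}\tilde z_i$ from intermediate formulas in those proofs rather than from the stated conclusions. Your proposal, by contrast, is a genuine construction from scratch, so on that level it is necessarily a different route. The main substantive ideas are right: realising $\tilde z_1=\eta_r Z_{1,\mu,0}$ with an ordinary cut-off and $\tilde z_0=\zeta_r Z_{0,\mu,0}$ with a \emph{logarithmic} cut-off (to compensate for $Z_{0,\mu,0}\to1/\mu$ at infinity), invoking the clean commutator identity
\[
L_{\mu,0}(\theta Z_{i,\mu,0})(x)=\frac1\pi\,\PV\!\int_\R\frac{(\theta(x)-\theta(y))\,Z_{i,\mu,0}(y)}{(x-y)^2}\,dy
\]
since $L_{\mu,0}Z_{i,\mu,0}=0$, and splitting $Z_{0,\mu,0}=\tfrac1\mu-e^{\U_{\mu,0}}$; the verification of the support, sign, evenness/oddness and local convergence properties is correct. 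This is, in spirit, close to what~\cite{CF22} does, but you recover the $1/\log r$ mechanism directly rather than via their parametrisation.

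Two points need correction or more care. First, the claim that ``$(-\Delta)^{\frac12}\zeta_r$ vanishes wherever $\zeta_r$ is affine in $\log|x|$'' is false as stated: $(-\Delta)^{\frac12}$ is nonlocal, so $(-\Delta)^{\frac12}\zeta_r(x)$ is nonzero even at points $x$ where $\zeta_r$ locally equals the log-affine profile $g$. The correct mechanism is the cancellation $(-\Delta)^{\frac12}\zeta_r(x)=-\frac1\pi\,\PV\!\int_\R\frac{\zeta_r(y)-g(y)}{(x-y)^2}\,dy$ for such $x$, valid because $(-\Delta)^{\frac12}g=0$ away from the origin and $\zeta_r-g$ is concentrated near the inner edge $|y|\sim2\mu_0$, the outer edge $|y|\sim2r$, and the origin; this reduces the problem to estimating those edge contributions, which is the content of the phrase ``only those two edges contribute,'' but the argument must be run this way. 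Second, and more seriously, the phrase ``a smoothing of'' the log-affine profile hides a real choice: if the outer end of $\zeta_r$ is cut off abruptly at $2r$ (or smoothed at a fixed scale $\ell\ll r$), the far-field part $\int_{2r}^\infty\frac{\log(y/2r)}{(y-x)^2}\,dy\lesssim\frac{\log(r/(2r-x))}{r}$ produces, after dividing by $\log r$, a term of size $\frac1r$ (not $\frac1{r\log r}$) when $x$ is within an $O(1)$ distance of $2r$, which overshoots the target $\frac{C}{\log r}\,\frac{r}{(r+|x|)^2}\sim\frac1{r\log r}$ by a factor $\log r$. To close this, the profile must be modified near the outer edge with a smoothing scale comparable to $r$ (so that $|\zeta_r'|\lesssim\frac1{r\log r}$ and $|\zeta_r''|\lesssim\frac1{r^2\log r}$ uniformly on $[r,2r]$), and then the near-diagonal/far-field split you describe does deliver the claimed bound; a fixed-scale smoothing near $2\mu_0$ is fine since the target there is only $O(1/\log r)$. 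With these adjustments the bookkeeping you outline goes through, but as written the smoothing prescription is insufficiently specific and the log-affine harmonicity remark is an overstatement.
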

\begin{proof}
The two functions were constructed in~\cite[Section~3]{CF22}.

Specifically, we can take~$\tilde{z}_1$ as the function of~\cite[Lemma~3.8]{CF22} corresponding to~$\delta_0 = \frac{1}{2}$ and~$\varepsilon = \frac{1}{12 r}$. The estimate for~$L_{\mu, 0} \, \tilde{z}_1$ is contained in the proof of said result, namely equations~(3.40),~(3.41), and the unnumbered one at the very end of the proof.

The function~$\tilde{z}_0$ can be obtained from~\cite[Lemma~3.6]{CF22}, with~$\delta_0 = \frac{1}{2}$,~$\varepsilon = \frac{1}{12 r}$, and~$R = 2 \mu_0$. The bound for~$L_{\mu, 0} \, \tilde{z}_0$ does not follow from~(3.19) there, but can be deduced from its proof, in particular by recalling formulas~(3.22),~(3.23) and carefully analyzing the subsequent estimates, up to~(3.26). Note that the requirement that~$R$ must be sufficiently large made in the statement of~\cite[Lemma~3.6]{CF22} is only required to get~(3.20), not~(3.19).

The conclusion on the locally uniform convergence to the~$Z_{i, \mu, 0}$'s is obvious for~$\tilde{z}_1$, since it agrees with~$Z_{1, \mu, 0}$ on~$[-r, r]$. That of~$\tilde{z}_0$ to~$Z_{0, \mu, 0}$ can be inferred from the proof of~\cite[Lemma~3.6]{CF22}, specifically by looking at the definition of~$\widetilde{Z}_0$ and at the bound on~$|h(Y) - 1|$.
\end{proof}

\begin{lemma} \label{existinboundedlem}
There exist two constants~$R_0 > 0$ and~$C \ge 1$, depending only on~$\mu_0$,~$\sigma$, and~$\bar{R}$, such that the following holds true. For every~$R \ge R_0$ and~$g \in L^\infty_{\sigma, 0}(\R)$, there exists a unique weak solution~$\phi$ to the problem
\begin{equation} \label{projprob}
\begin{dcases}
L_{\mu, 0} \, \phi = g + \sum_{i = 0}^1  d_i \chi Z_{i, \mu, 0} & \quad \mbox{in } (- R, R), \\
\phi = 0 & \quad \mbox{in } \R \setminus (- R, R),
\end{dcases}
\end{equation}
for some uniquely determined~$d_0, d_1 \in \R$, which satisfies the orthogonality conditions~\eqref{ortcondxi=0}. Furthermore, it holds
\begin{equation} \label{Linftyestforprojprob}
\| \phi \|_{L^\infty(\R)} + |d_0| + |d_1| \le C \| g \|_{L^\infty_{\sigma, 0}(\R)}.
\end{equation}
\end{lemma}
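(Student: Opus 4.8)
The plan is to obtain existence by a Fredholm-alternative argument in the $L^2$-orthogonal complement of the approximate kernel, and then to prove the uniform bound \eqref{Linftyestforprojprob} by refining Lemma~\ref{apriorilem} so as to also control the multipliers $d_0,d_1$. These two steps together yield the lemma: indeed \eqref{Linftyestforprojprob} with $g=0$ shows the homogeneous projected problem has only the trivial solution, which is exactly what the Fredholm alternative needs for existence, and uniqueness of the pair $(\phi,d_0,d_1)$ is immediate from \eqref{Linftyestforprojprob} applied to the difference of two solutions.

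\smallskip
\noindent\emph{Functional setting and existence.} Let $\mathcal{H}_R:=\{\phi\in H^{1/2}(\R):\phi=0\text{ a.e. in }\R\setminus(-R,R)\}$, a Hilbert space for the Gagliardo seminorm (a norm on $\mathcal{H}_R$ by the fractional Poincar\'e inequality), and let $H_\perp:=\{\phi\in\mathcal{H}_R:\int_\R\phi\,\chi Z_{i,\mu,0}\,dx=0,\ i=0,1\}$, a closed subspace of codimension two. On $\mathcal{H}_R$ consider the bounded symmetric bilinear form $a(\phi,\psi):=\frac{1}{2\pi}\iint_{\R^2}\frac{(\phi(x)-\phi(y))(\psi(x)-\psi(y))}{(x-y)^2}\,dx\,dy-\int_\R e^{\U_{\mu,0}}\phi\psi\,dx$. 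Since $\mathcal{H}_R\hookrightarrow L^2((-R,R))$ compactly (Rellich) and $e^{\U_{\mu,0}}\in L^\infty(\R)$, $a$ is a compact, self-adjoint perturbation of a coercive form; by the Fredholm alternative, for every $g\in L^\infty_{\sigma,0}(\R)$ (which defines a bounded functional on $\mathcal{H}_R$) there is a solution $\phi\in H_\perp$ of $a(\phi,\psi)=\int_\R g\psi\,dx$ for all $\psi\in H_\perp$, as soon as the corresponding homogeneous problem has only the trivial solution. Any such $\phi$ is a weak solution of \eqref{projprob}: the functional $\psi\mapsto a(\phi,\psi)-\int g\psi$ on $\mathcal{H}_R$ vanishes on $H_\perp$, hence equals $\sum_i d_i\int\chi Z_{i,\mu,0}\psi\,dx$ with $d_0,d_1$ uniquely determined; conversely any weak solution of \eqref{projprob}--\eqref{ortcondxi=0} arises this way, and standard elliptic regularity for $(-\Delta)^{1/2}$ gives $\phi\in L^\infty(\R)$. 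It therefore suffices to prove \eqref{Linftyestforprojprob} for an arbitrary weak solution $\phi$ of \eqref{projprob}--\eqref{ortcondxi=0}.

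\smallskip
\noindent\emph{The a priori estimate.} Let $\phi$ be such a solution, with multipliers $d_0,d_1$. Applying Lemma~\ref{apriorilem} with right-hand side $g+\sum_i d_i\chi Z_{i,\mu,0}\in L^\infty_{\sigma,0}(\R)$, and using $\|\chi Z_{i,\mu,0}\|_{L^\infty_{\sigma,0}(\R)}\le C$, yields $\|\phi\|_{L^\infty(\R)}\le C(\|g\|_{L^\infty_{\sigma,0}(\R)}+|d_0|+|d_1|)$. To control $d_0,d_1$ we test \eqref{projprob} against the auxiliary functions $\tilde z_0,\tilde z_1$ of Lemma~\ref{2funclem} taken with $r:=R/2$ (admissible once $R\ge 2R_1$); these are smooth and supported in $[-R,R]$, hence lie in $\mathcal{H}_R$. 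Since $\tilde z_j\in C_c^\infty(\R)$, one has $\frac{1}{2\pi}\iint_{\R^2}\frac{(\phi(x)-\phi(y))(\tilde z_j(x)-\tilde z_j(y))}{(x-y)^2}\,dx\,dy=\int_\R\phi\,(-\Delta)^{1/2}\tilde z_j\,dx$, so testing gives $\int_\R\phi\,L_{\mu,0}\tilde z_j\,dx=\int_\R g\,\tilde z_j\,dx+\sum_i d_i\int_\R\chi Z_{i,\mu,0}\,\tilde z_j\,dx$. By the parity of $\chi$, $Z_{0,\mu,0}$, $\tilde z_0$ (even) and of $Z_{1,\mu,0}$, $\tilde z_1$ (odd), the last sum reduces to $d_j m_j$ with $m_j:=\int_\R\chi Z_{j,\mu,0}\tilde z_j\,dx\ge c(\mu_0,\bar R)>0$ (for $j=1$ because $\tilde z_1=Z_{1,\mu,0}$ on $[-r,r]\supseteq\supp\chi$, for $j=0$ because $0\le\tilde z_0\le Z_{0,\mu,0}$ where the latter is nonnegative). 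Hence
\[
|d_j|\le m_j^{-1}\Big(\|\phi\|_{L^\infty(\R)}\,\|L_{\mu,0}\tilde z_j\|_{L^1(\R)}+\|g\|_{L^\infty_{\sigma,0}(\R)}\int_\R(1+|x|)^{-1-\sigma}|\tilde z_j(x)|\,dx\Big).
\]
Integrating the pointwise bounds of Lemma~\ref{2funclem} we get $\|L_{\mu,0}\tilde z_0\|_{L^1(\R)}\le C/\log r$ and $\|L_{\mu,0}\tilde z_1\|_{L^1(\R)}\le C(\log r)/r$, while $\int_\R(1+|x|)^{-1-\sigma}|\tilde z_j|\,dx\le C$ (using $|\tilde z_j|\le|Z_{j,\mu,0}|$ and $\mu\in[1/\mu_0,\mu_0]$); thus $|d_0|+|d_1|\le C\,\omega(R)\,\|\phi\|_{L^\infty(\R)}+C\|g\|_{L^\infty_{\sigma,0}(\R)}$ with $\omega(R):=1/\log(R/2)+2\log(R/2)/R\to 0$ as $R\to\infty$. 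Inserting this into the bound for $\|\phi\|_{L^\infty(\R)}$ and choosing $R_0=R_0(\mu_0,\sigma,\bar R)\ge 2R_1$ so large that $C^2\omega(R)\le\tfrac12$ for all $R\ge R_0$, we may absorb the $\|\phi\|_{L^\infty(\R)}$ term, obtaining $\|\phi\|_{L^\infty(\R)}\le C\|g\|_{L^\infty_{\sigma,0}(\R)}$ and then $|d_0|+|d_1|\le C\|g\|_{L^\infty_{\sigma,0}(\R)}$, which is \eqref{Linftyestforprojprob}.

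\smallskip
\noindent\emph{Main difficulty.} The crux is the estimate for $d_0,d_1$. One cannot test against $\chi Z_{i,\mu,0}$ directly, since $L_{\mu,0}(\chi Z_{i,\mu,0})$ carries an $O(1)$ cut-off error; one must instead use the near-kernel functions $\tilde z_j$ of Lemma~\ref{2funclem}, whose defining virtue is that $\|L_{\mu,0}\tilde z_j\|_{L^1(\R)}$ is small, of size $1/\log r$. This smallness—and the fact that it forces $r$, hence $R$, to be large—is exactly what makes the absorption step work, and explains the threshold $R\ge R_0$. The remaining ingredients (the Fredholm and orthogonality bookkeeping, the parity decoupling of $d_0$ and $d_1$, the passage from $H^{1/2}$ to $L^\infty$) are routine.
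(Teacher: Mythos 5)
Your proof is correct and follows essentially the same strategy as the paper's: first establish the a~priori bound by combining Lemma~\ref{apriorilem} with a duality argument that tests~\eqref{projprob} against the near-kernel functions~$\tilde z_0,\tilde z_1$ of Lemma~\ref{2funclem} (the smallness of~$\|L_{\mu,0}\tilde z_j\|_{L^1}$ and the parity decoupling are exactly the mechanism the paper exploits, there with~$r=R-1$ rather than~$R/2$), and then deduce existence from the Fredholm alternative in the constrained~$H^{1/2}$ space, applying the already-proved estimate with~$g=0$ to get injectivity of the homogeneous projected operator.
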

\begin{proof}
We begin by establishing the~$L^\infty$ estimate~\eqref{Linftyestforprojprob}. Let~$\phi$ be a weak solution of~\eqref{projprob} satisfying~\eqref{ortcond}, for some~$d_0, d_1 \in \R$. By elliptic estimates,~$\phi$ is continuous up to the boundary and, in particular, bounded. Hence, Lemma~\ref{apriorilem} yields the estimate
\begin{equation} \label{phiestintermsofdis}
\| \phi \|_{L^\infty(\R)} \le C \left( \| g \|_{L^\infty_{\sigma, 0}(\R)} + |d_0| + |d_1| \right),
\end{equation}
for some constant~$C \ge 1$ depending only on~$\mu_0$,~$\sigma$,~$\bar{R}$ and provided~$R$ is sufficiently large, also in dependence of these quantities alone. To control the constants~$|d_0|$ and~$|d_1|$, we take advantage of the two functions obtained in Lemma~\ref{2funclem}. By testing the equation against the function~$\tilde{z}_1$, corresponding to~$r = R - 1$ and taking~$R$ sufficiently large, we get that
$$
d_1 \int_\R \chi Z_{1, \mu, 0}^2 \, dx = \int_{\R} \Big( {\phi L_{\mu, 0} \, \tilde{z}_1 - g \tilde{z}_1} \Big) \, dx,
$$
where we used that~$\tilde{z}_1 = Z_{1, \mu, 0}$ on the support of~$\chi$. From this and the properties of~$\tilde{z}_1$, we easily find that
\begin{align*}
|d_1| & \le C \left( \| \phi \|_{L^\infty(\R)} \int_{\R} |L_{\mu, 0} \, \tilde{z}_1(x)| \, dx + \| g \|_{L^\infty_{\sigma, 0}(\R)} \int_{\R} \frac{|\tilde{z}_1(x)|}{(1 + |x|)^{1 + \sigma}} \, dx \right) \\
& \le C \left( \| \phi \|_{L^\infty(\R)} \int_{\R} \frac{dx}{(1 + |x|) (R + |x|)} + \| g \|_{L^\infty_{\sigma, 0}(\R)} \int_{\R} \frac{dx}{(1 + |x|)^{1 + \sigma}} \, dx \right) \\
& \le C \left( R^{- \tau} \| \phi \|_{L^\infty(\R)} + \| g \|_{L^\infty_{\sigma, 0}(\R)} \right).
\end{align*}
for any fixed~$\tau \in (0, 1)$ and with~$C > 0$ depending only on~$\mu_0$,~$\bar{R}$,~$\sigma$, and~$\tau$. Testing the equation with~$\tilde{z}_0$ instead, we obtain that
\begin{align*}
|d_0| & \le C \left( \| \phi \|_{L^\infty(\R)} \int_{\R} |L_{\mu, 0} \, \tilde{z}_0(x)| \, dx + \| g \|_{L^\infty_{\sigma, 0}(\R)} \int_{\R} \frac{|\tilde{z}_0(x)|}{(1 + |x|)^{1 + \sigma}} \, dx \right) \\
& \le C \left( \frac{\| \phi \|_{L^\infty(\R)}}{\log R} \int_{\R} \left( \frac{\log (2 + |x|)}{(1 + |x|)^2} + \frac{R}{(R + |x|)^2} \right) dx + \| g \|_{L^\infty_{\sigma, 0}(\R)} \int_{\R} \frac{dx}{(1 + |x|)^{1 + \sigma}} \right) \\
& \le  C \left( \frac{\| \phi \|_{L^\infty(\R)}}{\log R} + \| g \|_{L^\infty_{\sigma, 0}(\R)} \right).
\end{align*}
By combining these two estimates with~\eqref{phiestintermsofdis} and taking~$R$ sufficiently large to reabsorb the~$L^\infty$ norm of~$\phi$ to the left-hand side, we are led to~\eqref{Linftyestforprojprob}. 

Note that~\eqref{Linftyestforprojprob} yields in particular the uniqueness of a triple~$(\phi, d_0, d_1)$ for a given~$g \in L^\infty_{\sigma, 0}(\R)$. We now address its existence. Let
$$
H := \left\{ \phi \in H^{\frac{1}{2}}(\R) : \phi = 0 \mbox{ in } \R \setminus (-R, R) \mbox{ and } \int_{\R} \phi \chi Z_{i, \mu, 0} \, dx = 0 \mbox{ for } i = 0, 1 \right\}.
$$
Thanks to the fractional Poincar\'e inequality,~$H$ is a Hilbert space with inner product
$$
\langle \phi, \psi \rangle_H := \frac{1}{2 \pi} \int_{\R} \int_{\R} \frac{\big( {\phi(x) - \phi(y)} \big) \big( {\psi(x) - \psi(y)} \big)}{(x - y)^2} \, dx dy.
$$
It is then not hard to see that~$\phi$ is a weak solution of problem~\eqref{projprob} for some~$d_0, d_1 \in \R$ and fulfilling the orthogonality conditions~\eqref{ortcond} if and only if~$\phi \in H$ and it satisfies
$$
\langle \phi, \psi \rangle_H - \int_{-R}^R e^{\U_{\mu, 0}} \phi \psi \, dx = \int_{-R}^R g \psi \, dx \quad \mbox{for all } \psi \in H.
$$
Thanks to estimate~\eqref{Linftyestforprojprob} (applied with~$g = 0$), one can easily show via Fredholm's alternative that this problem has a unique solution~$\phi \in H$ for all~$g \in L^\infty_{\sigma, 0}(\R)$. This concludes the proof.
\end{proof}

By letting~$R \rightarrow +\infty$, we obtain the unique solvability of the problem in the whole real line, namely we establish Proposition~\ref{mainlinearprop}.

\begin{proof}[Proof of Proposition~\ref{mainlinearprop}]
According to Lemma~\ref{existinboundedlem}, let~$(\phi^k, d_0^k, d_1^k)$ be the unique solution of problem~\eqref{projprob} for~$R = k \in \N$ sufficiently large, with~$\phi^k$ satisfying the orthogonality conditions~\eqref{ortcond}. Thanks to the bound~\eqref{Linftyestforprojprob} and standard regularity estimates for the half-Laplacian, we see that, up to a subsequence,~$\{ d_0^k \}_k$ and~$\{ d_1^k \}_k$ respectively converge to two constants~$d_0, d_1 \in \R$ and~$\phi^k \rightarrow \phi$ in~$C^\alpha_\loc(\R)$ for any~$\alpha \in (0, 1)$. Clearly,~$\phi$ is a weak solution to problem~\eqref{projeq}--\eqref{ortcond} satisfying estimate~\eqref{linearLinftyest}. Let now~$(\hat{\phi}, \hat{d}_0, \hat{d}_1)$ be another bounded solution. Then, the difference~$\psi := \phi - \hat{\phi}$ solves
$$
\begin{dcases}
L_{\mu, 0} \, \psi = \sum_{i = 0}^1 e_i \chi Z_{i, \mu, 0} & \quad \mbox{in } \R, \\
\int_{\R} \psi \chi Z_{i, \mu, 0} \, dx = 0 & \quad \mbox{for } i = 0, 1,
\end{dcases}
$$
with~$e_i := d_i - \hat{d}_i$. By testing the equation for~$\psi$ against the functions~$\tilde{z}_0$,~$\tilde{z}_1$ of Lemma~\ref{2funclem} and arguing as in the proof of Lemma~\ref{existinboundedlem}, one easily obtains that~$e_0 = e_1 = 0$. Hence,~$\psi$ is an entire solution to~$L_{\mu, 0} \, \psi = 0$, orthogonal to~$\chi Z_{0, \mu, 0}$ and~$\chi Z_{1, \mu, 0}$. The nondegeneracy of~$L_{\mu, 0}$ (Lemma~\ref{nondeglem}) gives that~$\psi = 0$ in~$\R$. The proof of the proposition is thus complete.
\end{proof}

\subsection{Nonlinear theory} \label{subsectNLT} 

\begin{proposition} \label{mainnonlinprop}
There exist two constants~$\varepsilon_0 \in (0, 1)$ and~$C_\sharp > 0$, depending only on~$\mu_0$,~$\sigma$,~$\bar{R}$, and~$\| \kappa \|_{L^\infty(\R)}$, such that, if~$\varepsilon \in (0, \varepsilon_0]$,~$\mu \in \left[ \frac{1}{\mu_0}, \mu_0 \right]$, and~$\xi \in \R$, then there exists a unique bounded solution~$\phi$ of
\begin{equation} \label{projnonlineq}
L_{\mu, \xi} \, \phi = - \E + \NN(\phi) + \sum_{i = 0}^1 d_i \chi_\xi Z_{i, \mu, \xi} \quad \mbox{in } \R,
\end{equation}
for some uniquely determined~$d_0, d_1 \in \R$, which satisfies the orthogonality conditions~\eqref{ortcond} and the bound
\begin{equation} \label{nonlinLinftybound}
\| \phi \|_{L^\infty(\R)} \le C_\sharp \, \varepsilon.
\end{equation}
\end{proposition}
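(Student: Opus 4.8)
The plan is to recast the projected nonlinear problem \eqref{projnonlineq} as a fixed point equation and solve it via the Banach contraction principle, using the linear theory of Proposition~\ref{mainlinearprop} together with the bounds on $\E$ and $\NN$ from Lemma~\ref{ENboundslem}.

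First I would introduce the solution operator of the linearized problem. For $g \in L^\infty_{\sigma,\xi}(\R)$, let $\T_{\mu,\xi}(g) := \phi$ be the unique bounded function provided by Proposition~\ref{mainlinearprop}, that is, the solution of $L_{\mu,\xi}\,\phi = g + \sum_{i=0}^1 d_i \chi_\xi Z_{i,\mu,\xi}$ satisfying the orthogonality conditions \eqref{ortcond}, with $\|\T_{\mu,\xi}(g)\|_{L^\infty(\R)} \le C_0 \|g\|_{L^\infty_{\sigma,\xi}(\R)}$ and $C_0$ independent of $\mu \in \left[\tfrac1{\mu_0},\mu_0\right]$ and $\xi \in \R$. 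Since Proposition~\ref{mainlinearprop} also asserts uniqueness of the triple, a bounded $\phi$ solves \eqref{projnonlineq} together with \eqref{ortcond} for some $d_0,d_1 \in \R$ precisely when it is a fixed point of the map
\[
\A(\phi) := \T_{\mu,\xi}\big( {-\E} + \NN(\phi) \big),
\]
which is well defined on $\{\phi \in L^\infty(\R) : \|\phi\|_{L^\infty(\R)} \le 1\}$ because $\NN(\phi) \in L^\infty_{\sigma,\xi}(\R)$ there by \eqref{Nbound}.

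Next I would check that $\A$ is a contraction of a small ball into itself. Fix $C_\sharp := 2 C_0 C_1$, with $C_1$ the constant in \eqref{Ebound}, and set $\BB_\varepsilon := \{\phi \in L^\infty(\R): \|\phi\|_{L^\infty(\R)} \le C_\sharp \varepsilon\}$. Taking $\varepsilon_0$ small enough that $C_\sharp \varepsilon_0 \le 1$, for $\phi \in \BB_\varepsilon$ the linear estimate combined with \eqref{Ebound} and \eqref{Nbound} gives
\[
\|\A(\phi)\|_{L^\infty(\R)} \le C_0\big( C_1 \varepsilon + C_2 C_\sharp (C_\sharp+1)\,\varepsilon^2 \big) \le \frac{C_\sharp}{2}\,\varepsilon + C_0 C_2 C_\sharp (C_\sharp+1)\,\varepsilon^2 \le C_\sharp \varepsilon,
\]
provided $\varepsilon_0$ is shrunk (depending only on $\mu_0$, $\sigma$, $\bar R$ and $\|\kappa\|_{L^\infty(\R)}$) so that the quadratic term is at most $\tfrac{C_\sharp}{2}\varepsilon$; hence $\A(\BB_\varepsilon) \subset \BB_\varepsilon$. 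Likewise, for $\phi,\psi \in \BB_\varepsilon$, estimate \eqref{NLipbound} and the linear bound yield
\[
\|\A(\phi) - \A(\psi)\|_{L^\infty(\R)} \le C_0 C_2 \big(2 C_\sharp \varepsilon + \varepsilon\big)\,\|\phi-\psi\|_{L^\infty(\R)} \le \frac12\,\|\phi-\psi\|_{L^\infty(\R)},
\]
after a further reduction of $\varepsilon_0$. Thus $\A$ is a $\tfrac12$-contraction of the complete metric space $\BB_\varepsilon$ into itself, and the Banach fixed point theorem yields a unique $\phi = \phi_{\mu,\xi} \in \BB_\varepsilon$ with $\A(\phi) = \phi$. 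By construction $\phi$ solves \eqref{projnonlineq} with $d_0,d_1$ the constants furnished by Proposition~\ref{mainlinearprop} applied to $g = -\E + \NN(\phi)$, satisfies the orthogonality conditions \eqref{ortcond}, and obeys the bound \eqref{nonlinLinftybound} with the above $C_\sharp$; uniqueness of the triple $(\phi,d_0,d_1)$ with $\|\phi\|_{L^\infty(\R)} \le C_\sharp\varepsilon$ is inherited from the uniqueness in the fixed point theorem and in Proposition~\ref{mainlinearprop}.

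The argument is essentially routine once Proposition~\ref{mainlinearprop} and Lemma~\ref{ENboundslem} are available; the only point that needs some care is the bookkeeping of constants — one must choose the radius $C_\sharp$ first, then fix $\varepsilon_0$ so that the terms quadratic in $\varepsilon$ coming from $\NN$ are absorbed into the linear ones, and verify that all thresholds depend only on $\mu_0$, $\sigma$, $\bar R$ and $\|\kappa\|_{L^\infty(\R)}$. I do not expect any genuine obstacle beyond this.
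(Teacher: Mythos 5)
Your proposal is correct and takes essentially the same route as the paper: both recast \eqref{projnonlineq}--\eqref{ortcond} as a fixed point problem for $\phi \mapsto L_{\mu,\xi}^{-1}(-\E + \NN(\phi))$ on the ball of radius $2C_0C_1\varepsilon$ in $L^\infty(\R)$, and apply the contraction lemma after verifying self-mapping and the Lipschitz bound with \eqref{Ebound}, \eqref{Nbound}, \eqref{NLipbound}, and \eqref{linearLinftyest}. The choice of radius and the bookkeeping of constants match the paper's.
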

\begin{proof}
Given a function~$g \in L^\infty_{\sigma, \xi}(\R)$, let~$L_{\mu, \xi}^{-1}(g)$ be the unique bounded weak solution of problem~\eqref{projeq}--\eqref{ortcond}, for some~$d_0, d_1 \in \R$, as given by Proposition~\ref{mainlinearprop}. Thanks to estimate~\eqref{linearLinftyest}, we have that~$L_{\mu, \xi}^{-1}: L^\infty_{\sigma, \xi}(\R) \to L^\infty(\R)$ is a bounded linear operator and it holds
\begin{equation} \label{L-1bound}
\| L_{\mu, \xi}^{-1}(g) \|_{L^\infty(\R)} \le C_0 \| g \|_{L^\infty_{\sigma, \xi}(\R)},
\end{equation}
for some constant~$C_0 > 0$ depending only on~$\mu_0$,~$\sigma$, and~$\bar{R}$.

Notice that~$\E \in L^\infty_{\sigma, \xi}(\R)$, thanks to~\eqref{Ebound} of Lemma~\ref{ENboundslem}. Let then~$\tau > 0$ and consider the closed ball
$$
B_\tau := \Big\{ {\phi \in L^\infty(\R) : \| \phi \|_{L^\infty(\R)} \le \tau \varepsilon} \Big\}.
$$
If~$\phi \in B_\tau$ with~$\tau \varepsilon \le 1$, then also~$\NN(\phi) \in L^\infty_{\sigma, \xi}(\R)$, by~\eqref{Nbound}. By virtue of this, we may recast the problem of finding a bounded weak solution to~\eqref{projnonlineq} which satisfies the orthogonality conditions~\eqref{ortcond} as a fixed point problem for the map~$T: B_\tau \to L^\infty(\R)$ defined by~$T(\phi) := L_{\mu, \xi}^{-1} \left( - \E + \NN(\phi) \right)$ for~$\phi \in B_\tau$.

In order to prove the existence of a fixed point in~$B_\tau$, we employ the contraction lemma. To this aim, we shall show that
\begin{equation} \label{TmapsBrhoinitself}
\| T(\phi) \|_{L^\infty(\R)} \le \tau \varepsilon \quad \mbox{for all } \phi \in B_\tau
\end{equation}
and
\begin{equation} \label{Tcontracts}
\| T(\phi) - T(\psi) \|_{L^\infty(\R)} \le \frac{1}{2} \| \phi - \psi \|_{L^\infty(\R)} \quad \mbox{for all } \phi, \psi \in B_\tau,
\end{equation}
provided~$\tau$ is carefully chosen and~$\varepsilon$ is small enough. To verify~\eqref{TmapsBrhoinitself}, we recall~\eqref{Ebound},~\eqref{Nbound},~\eqref{L-1bound}, and deduce that, for~$\phi \in B_\tau$,
$$
\| T(\phi) \|_{L^\infty(\R)} \le C_0 \left\| - \E + \NN(\phi) \right\|_{L^\infty_{\sigma, \xi}(\R)} \le C_0 \left( C_1 + C_2 \tau (\tau + 1) \varepsilon \right) \varepsilon,
$$
with~$C_1$ and~$C_2$ given by Lemma~\ref{ENboundslem}. Consequently,~\eqref{TmapsBrhoinitself} holds true, provided we take~$\tau := 2 C_0 C_1$ and~$\varepsilon$ sufficiently small, in dependence of~$\mu_0$,~$\sigma$,~$\bar{R}$, and~$\| \kappa \|_{L^\infty(\R)}$ only. We now address~\eqref{Tcontracts}. Given~$\phi, \psi \in B_\tau$, using estimate~\eqref{NLipbound} we find that
$$
\| T(\phi) - T(\psi) \|_{L^\infty(\R)} \le C_0 \left\| \NN(\phi) - \NN(\psi) \right\|_{L^\infty_{\sigma, \xi}(\R)} \le C_0  C_2 \left( 2 \tau + 1 \right) \varepsilon \| \phi - \psi \|_{L^\infty(\R)},
$$
which gives~\eqref{Tcontracts} by taking~$\varepsilon$ small enough. In light of~\eqref{TmapsBrhoinitself} and~\eqref{Tcontracts}, the map~$T$ is a contraction of~$B_\tau$ into itself and thus it has a unique fixed point within~$B_\tau$. This concludes the proof.
\end{proof}

Proposition~\ref{mainnonlinprop} gives the existence of a solution~$\phi = \phi_{(\xi, \mu)}$ to the projected equation~\eqref{projnonlineq} in correspondence to every~$(\xi, \mu) \in \R \times \left[ \frac{1}{\mu_0}, \mu_0 \right]$. For later purposes, we establish here the continuity of~$\phi_{(\xi, \mu)}$ in~$(\xi, \mu)$ with respect to pointwise convergence.

\begin{lemma} \label{continuitylem}
Let~$\big\{ {(\xi_n, \mu_n)} \big\}_n \subset \R \times \left[ \frac{1}{\mu_0}, \mu_0 \right]$ be a sequence converging to some limit point~$(\xi_\infty, \mu_\infty)$. Then,~$\phi_{(\xi_n, \mu_n)} \rightarrow \phi_{(\xi_\infty, \mu_\infty)}$ locally uniformly in~$\R$. 
\end{lemma}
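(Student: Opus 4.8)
The plan is to exploit the uniqueness statement in Proposition~\ref{mainnonlinprop} together with a compactness argument. For brevity write $\phi_n := \phi_{(\xi_n, \mu_n)}$ and $\phi_\infty := \phi_{(\xi_\infty, \mu_\infty)}$, and let $d_0^n, d_1^n$ be the associated Lagrange multipliers. First I would note that, by the uniform bound~\eqref{nonlinLinftybound}, the sequence $\{\phi_n\}$ is bounded in $L^\infty(\R)$ by $C_\sharp \varepsilon$, independently of $n$. Testing the equation~\eqref{projnonlineq} against $\tilde z_0, \tilde z_1$ from Lemma~\ref{2funclem} (exactly as in the proof of Lemma~\ref{existinboundedlem} and the uniqueness part of Proposition~\ref{mainlinearprop}) shows that the multipliers are controlled: $|d_0^n| + |d_1^n| \le C(\|\E\|_{L^\infty_{\sigma,\xi_n}(\R)} + \|\NN(\phi_n)\|_{L^\infty_{\sigma,\xi_n}(\R)}) \le C\varepsilon$, using Lemma~\ref{ENboundslem}; so up to a subsequence $d_i^n \to d_i^\infty$ for $i=0,1$.

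Next I would upgrade the $L^\infty$ bound to local compactness. The right-hand side of~\eqref{projnonlineq}, namely $-\E_{\mu_n,\xi_n} + \NN_{\mu_n,\xi_n}[\phi_n] + \sum_i d_i^n \chi_{\xi_n} Z_{i,\mu_n,\xi_n}$, is bounded in $L^\infty_{\loc}(\R)$ uniformly in $n$ (all the explicit weights $\tfrac{2\mu_n}{\mu_n^2+(x-\xi_n)^2}$, the functions $Z_{i,\mu_n,\xi_n}$, and the cutoffs converge locally uniformly since $(\xi_n,\mu_n)\to(\xi_\infty,\mu_\infty)$ with $\mu_\infty>0$). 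Since $(-\Delta)^{1/2}\phi_n = e^{\U_{\mu_n,\xi_n}}\phi_n + (\text{that right-hand side})$ is bounded in $L^\infty_{\loc}$ and $\phi_n$ is bounded in $L^\infty(\R)$, standard interior regularity estimates for the half-Laplacian give a uniform $C^\alpha_{\loc}(\R)$ bound on $\phi_n$ for some $\alpha\in(0,1)$. By Arzel\`a--Ascoli and a diagonal argument, along a further subsequence $\phi_n \to \psi$ in $C^{\alpha'}_{\loc}(\R)$ for every $\alpha'\in(0,\alpha)$, with $\psi \in L^\infty(\R)$ and $\|\psi\|_{L^\infty(\R)} \le C_\sharp\varepsilon$.

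Then I would pass to the limit in the equation and the side conditions. Using the locally uniform convergences $\U_{\mu_n,\xi_n}\to\U_{\mu_\infty,\xi_\infty}$, $Z_{i,\mu_n,\xi_n}\to Z_{i,\mu_\infty,\xi_\infty}$, $\chi_{\xi_n}\to\chi_{\xi_\infty}$, the convergence $\E_{\mu_n,\xi_n}\to\E_{\mu_\infty,\xi_\infty}$ and $\NN_{\mu_n,\xi_n}[\phi_n]\to\NN_{\mu_\infty,\xi_\infty}[\psi]$ pointwise (the latter from the explicit formula for $\NN$ plus $\phi_n\to\psi$ locally uniformly), one checks that $\psi$ is a weak solution of
\[
L_{\mu_\infty,\xi_\infty}\,\psi = -\E_{\mu_\infty,\xi_\infty} + \NN_{\mu_\infty,\xi_\infty}[\psi] + \sum_{i=0}^1 d_i^\infty \chi_{\xi_\infty} Z_{i,\mu_\infty,\xi_\infty} \quad\text{in }\R.
\]
The orthogonality conditions $\int_\R \phi_n \chi_{\xi_n} Z_{i,\mu_n,\xi_n}\,dx = 0$ pass to the limit as well, since $\chi_{\xi_n} Z_{i,\mu_n,\xi_n}$ is compactly supported (support contained in a fixed bounded interval, as $\xi_n$ is bounded) and converges uniformly while $\phi_n$ converges uniformly there; hence $\int_\R \psi \chi_{\xi_\infty} Z_{i,\mu_\infty,\xi_\infty}\,dx = 0$. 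Thus $(\psi, d_0^\infty, d_1^\infty)$ is a bounded solution of the projected problem~\eqref{projnonlineq}--\eqref{ortcond} at $(\xi_\infty,\mu_\infty)$; by the uniqueness asserted in Proposition~\ref{mainnonlinprop}, $\psi = \phi_{(\xi_\infty,\mu_\infty)} = \phi_\infty$. Since the limit is the same for every convergent subsequence, the full sequence $\phi_n \to \phi_\infty$ locally uniformly, which is the claim.

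The main obstacle I anticipate is purely technical: making sure the weak formulation is stable under the limit, i.e.\ that one may pass to the limit inside the bilinear form $\langle\cdot,\cdot\rangle$ defining weak solutions of $L_{\mu_n,\xi_n}$. This is handled by testing against a fixed $\varphi\in C_c^\infty(\R)$: the nonlocal term $\int_\R\int_\R \tfrac{(\phi_n(x)-\phi_n(y))(\varphi(x)-\varphi(y))}{(x-y)^2}\,dx\,dy$ converges because $\phi_n$ is uniformly bounded and converges locally uniformly, so the integrand converges pointwise and is dominated (using $|\varphi(x)-\varphi(y)|\lesssim \min\{1,|x-y|\}$ together with $\supp\varphi$ compact and the uniform $L^\infty$ bound on $\phi_n$) by an integrable majorant; dominated convergence then applies. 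The remaining terms involve only locally uniformly convergent integrands against compactly supported weights, so they pass to the limit trivially.
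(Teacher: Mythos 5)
Your argument is correct and follows essentially the same route as the paper's proof: uniform $L^\infty$ bound from~\eqref{nonlinLinftybound}, elliptic regularity for $(-\Delta)^{1/2}$ to get $C^\alpha$ compactness, passage to the limit in the projected equation and orthogonality conditions, and then uniqueness from Proposition~\ref{mainnonlinprop} to identify the limit and upgrade from a subsequence to the full sequence. You are somewhat more explicit than the paper about bounding the multipliers $d_i^n$ by testing against the $\tilde z_i$ and about stability of the weak formulation, but these are details the paper leaves implicit rather than a genuinely different method.
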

\begin{proof}
Write simply~$\phi_n := \phi_{(\xi_n, \mu_n)}$ and~$\phi_\infty :=\phi_{(\xi_\infty, \mu_\infty)}$. By estimate~\eqref{nonlinLinftybound} of Proposition~\ref{mainnonlinprop}, we know that~$\| \phi_\infty \|_{L^\infty(\R)} \le C_\sharp \, \varepsilon$ and~$\| \phi_n \|_{L^\infty(\R)} \le C_\sharp \, \varepsilon$ for every~$n \in \N$. Note that each~$\phi_n$ solves the equation
$$
(-\Delta)^{\frac{1}{2}} \phi_n = e^{\U_{\mu_n, \xi_n}} \phi_n - \E_{\mu_n, \xi_n} + \NN_{\mu_n, \xi_n}[\phi_n] + \sum_{i = 0}^1 d_{i, n} \chi_{\xi_n} Z_{i, \mu_n, \xi_n} \quad \mbox{in } \R,
$$
for some~$d_{0, n}, d_{1, n} \in \R$. Since~$(\xi_n, \mu_n)$ converges to~$(\xi_\infty, \mu_\infty)$, it is not hard to show that the right-hand side of the above equation has~$L^\infty(\R)$ norm bounded uniformly in~$n$. By this and standard elliptic estimates for~$(-\Delta)^{\frac{1}{2}}$, we infer that~$\| \phi_n \|_{C^\alpha(\R)}$ is also uniformly bounded in~$n$, for any~$\alpha \in (0, 1)$. By Ascoli-Arzel\`a theorem,~$\phi_n$ thus converges to some~$\hat{\phi}_\infty \in C^\alpha(\R)$ locally uniformly in~$\R$. Clearly,~$\hat{\phi}_\infty$ satisfies
$$
\begin{dcases}
L_{\mu_\infty, \xi_\infty} \, \hat{\phi}_\infty = - \E_{\mu_\infty, \xi_\infty} + \NN_{\mu_\infty, \xi_\infty}[\hat{\phi}_\infty] + \sum_{i = 0}^1 \hat{d}_{i, \infty} \chi_{\xi_\infty} Z_{i, \mu_\infty, \xi_\infty} & \quad \mbox{in } \R, \\
\int_{\R} \hat{\phi}_\infty \chi_{\xi_\infty} Z_{i, \mu_\infty, \xi_\infty} \, dx = 0 & \quad \mbox{for } i = 0, 1,
\end{dcases}
$$
for some~$\hat{d}_{0, \infty}, \hat{d}_{1, \infty} \in \R$. However, since we also have that~$\| \hat{\phi}_\infty \|_{L^\infty(\R)} \le C_\sharp \, \varepsilon$, the uniqueness granted by Proposition~\ref{mainnonlinprop} enables us to conclude that~$\hat{\phi}_\infty = \phi_\infty$ in~$\R$. This concludes the proof.
\end{proof}

\subsection{Finite dimensional reduction} \label{subsectFDR}

Thanks to Proposition~\ref{mainnonlinprop}, the problem of finding a solution~$\phi$ to~\eqref{maineqforphi} with~$\| \phi \|_{L^\infty(\R)} = O(\varepsilon)$ is thus reduced to finding~$\mu > 0$ and~$\xi \in \R$ for which the constants~$d_0 = d_0(\xi, \mu)$ and~$d_1 = d_1(\xi, \mu)$ appearing in~\eqref{projnonlineq} are both equal to zero.

We have the following result, which connects the vanishing of the coefficients~$d_i$'s to an equation for the gradient of the function~$\Gamma$ defined in~\eqref{Gamma0def}.

\begin{lemma} \label{dvanishlem}
Let~$(\xi, \mu) \in \R \times (0, +\infty)$. Then,~$d_0(\xi, \mu) = d_1(\xi, \mu) = 0$ if and only if
$$
\nabla \Gamma(\xi, \mu) = \frac{1}{2 \pi \varepsilon} \left( \int_{\R} \NN(\phi) Z_{1, \mu, \xi} \, dx, \int_{\R} \NN(\phi) Z_{0, \mu, \xi} \, dx \right).
$$
\end{lemma}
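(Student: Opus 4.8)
The plan is to pin down the coefficients $d_0(\xi,\mu)$ and $d_1(\xi,\mu)$ by testing equation~\eqref{projnonlineq} against suitable functions built from $Z_{0,\mu,\xi}$ and $Z_{1,\mu,\xi}$, and then to recognise the resulting integrals of the error term $\E$ as the components of $\nabla\Gamma$. By translation we may take $\xi=0$, and we abbreviate $Z_i:=Z_{i,\mu,0}$. The natural test functions would be $Z_0$ and $Z_1$ themselves, since $L_{\mu,0}Z_i=0$; however $Z_0$ does not decay at infinity, so instead I would test against the compactly supported approximations $\tilde z_0,\tilde z_1$ from Lemma~\ref{2funclem} (for a truncation parameter $r$) and then let $r\to+\infty$. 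Since $\tilde z_j\in C_c^\infty(\R)$ is an admissible test function, the weak formulation of~\eqref{projnonlineq} yields
\[
\int_\R \phi\, L_{\mu,0}\tilde z_j\,dx \;=\; \int_\R\big(-\E+\NN(\phi)\big)\,\tilde z_j\,dx \;+\; \sum_{i=0}^1 d_i\int_\R \chi\,Z_i\,\tilde z_j\,dx .
\]

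Next I would pass to the limit $r\to+\infty$. The left-hand side tends to $0$: the pointwise bounds on $L_{\mu,0}\tilde z_j$ in Lemma~\ref{2funclem} give $\|L_{\mu,0}\tilde z_0\|_{L^1(\R)}=O(1/\log r)$ and $\|L_{\mu,0}\tilde z_1\|_{L^1(\R)}=O((\log r)/r)$, while $\|\phi\|_{L^\infty(\R)}\le C_\sharp\varepsilon$ by Proposition~\ref{mainnonlinprop}. On the right-hand side, since $|\tilde z_j|\le|Z_j|$ pointwise, $\tilde z_j\to Z_j$ uniformly on compact sets, and $|(-\E+\NN(\phi))Z_j|\lesssim(1+|x|)^{-2}\in L^1(\R)$ by~\eqref{Ealternateform} and~\eqref{estimateNNphi}, dominated convergence gives that the first term converges to $\int_\R(-\E+\NN(\phi))Z_j\,dx$; and since $\supp\chi$ is compact, $\int_\R\chi Z_i\tilde z_j\,dx\to\int_\R\chi Z_i Z_j\,dx$. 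Because $Z_0$ is even and $Z_1$ is odd, the cross terms $\int_\R\chi Z_0Z_1\,dx$ vanish, so in the limit
\[
0 \;=\; \int_\R\big(-\E+\NN(\phi)\big)Z_j\,dx \;+\; d_j\, m_j, \qquad m_j:=\int_\R\chi\,Z_j^2\,dx>0,
\]
for $j=0,1$. Hence $d_0(\xi,\mu)=d_1(\xi,\mu)=0$ if and only if $\int_\R\E\,Z_j\,dx=\int_\R\NN(\phi)Z_j\,dx$ for $j=0,1$.

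It remains to identify $\int_\R\E\,Z_{j,\mu,\xi}\,dx$ (reinstating a general $\xi$). From~\eqref{Ealternateform} we have $\E=\varepsilon\kappa\,e^{\U_{\mu,\xi}}$, and since $Z_{0,\mu,\xi}=\partial_\mu\U_{\mu,\xi}$ and $Z_{1,\mu,\xi}=\partial_\xi\U_{\mu,\xi}$, this gives $\E\,Z_{1,\mu,\xi}=\varepsilon\kappa\,\partial_\xi\big(e^{\U_{\mu,\xi}}\big)$ and $\E\,Z_{0,\mu,\xi}=\varepsilon\kappa\,\partial_\mu\big(e^{\U_{\mu,\xi}}\big)$. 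As $\kappa\in L^\infty(\R)$ and the $(\xi,\mu)$-derivatives of $e^{\U_{\mu,\xi}}$ decay like $(1+|x-\xi|)^{-2}$ locally uniformly in $(\xi,\mu)$, we may differentiate under the integral sign, so $\int_\R\E\,Z_{1,\mu,\xi}\,dx=\varepsilon\,\partial_\xi\!\int_\R\kappa\,e^{\U_{\mu,\xi}}\,dx$ and likewise with $\partial_\mu$. The change of variables $x=\xi+\mu y$ turns $\int_\R\kappa(x)e^{\U_{\mu,\xi}(x)}\,dx$ into $\int_\R\frac{2\kappa(\xi+\mu y)}{1+y^2}\,dy=2\pi\Gamma(\xi,\mu)$, by~\eqref{Gamma0def}. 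Therefore $\big(\int_\R\E\,Z_{1,\mu,\xi}\,dx,\ \int_\R\E\,Z_{0,\mu,\xi}\,dx\big)=2\pi\varepsilon\big(\partial_\xi\Gamma(\xi,\mu),\partial_\mu\Gamma(\xi,\mu)\big)=2\pi\varepsilon\,\nabla\Gamma(\xi,\mu)$, and combining this with the previous paragraph gives precisely the claimed equivalence.

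The step I expect to be the main obstacle is the passage to the limit $r\to+\infty$ in the tested identity: one has to exploit the $L^1$-smallness of $L_{\mu,0}\tilde z_j$ supplied by Lemma~\ref{2funclem} in order to kill the truncation error $\int_\R\phi\,L_{\mu,0}\tilde z_j\,dx$, and one must keep track of the fact that, although $Z_0$ only tends to the constant $1/\mu$ at infinity, all the integrals appearing in the final identity converge absolutely thanks to the quadratic decay of $\E$ and of $\NN(\phi)$. Everything else amounts to routine computations.
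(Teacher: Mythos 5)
Your proof is correct and follows essentially the same approach as the paper: test \eqref{projnonlineq} against the translated $\tilde z_i$ of Lemma~\ref{2funclem}, send $r\to+\infty$ using the $L^1$-smallness of $L_{\mu,0}\tilde z_i$ and dominated convergence on the remaining terms, and then identify $\int_\R \E\, Z_{i,\mu,\xi}\,dx$ with the components of $2\pi\varepsilon\,\nabla\Gamma$. The only (cosmetic) difference is in the last step: the paper differentiates the integral representation $\Gamma(\xi,\mu)=\frac{\mu}{\pi}\int_\R \frac{\kappa(x)}{\mu^2+(x-\xi)^2}\,dx$ directly and recognizes the integrands as $\E Z_i/(2\pi\varepsilon)$, while you differentiate the simpler identity $\int_\R\kappa\,e^{\U_{\mu,\xi}}\,dx=2\pi\Gamma(\xi,\mu)$ under the integral sign using $\E\,Z_{i,\mu,\xi}=\varepsilon\kappa\,\partial_{(\cdot)}(e^{\U_{\mu,\xi}})$; these are the same computation presented in a different order.
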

\begin{proof}
In order to find an expression for~$d_0$ and~$d_1$, we argue as in the proof of Lemma~\ref{existinboundedlem} and test equation~\eqref{projnonlineq} against the translations~$\tilde{z}_i(\cdot - \xi)$ of the functions constructed in Lemma~\ref{2funclem}, for~$r$ large. We obtain
$$
d_i \int_{\R} \chi_\xi Z_{i, \mu, \xi}^2 \, dx = \int_{\R} \Big( {\phi L_{\mu, \xi} \tilde{z}_{i, \xi} + \E \tilde{z}_{i, \xi} - \NN(\phi) \tilde{z}_{i, \xi}} \Big) \, dx,
$$
for~$i = 0, 1$ and provided~$r$ is sufficiently large. Recalling Lemmas~\ref{ENboundslem} and~\ref{2funclem}, by means of Lebesgue's dominated convergence theorem it is not hard to see that
$$
\int_{\R} \phi L_{\mu, \xi} \tilde{z}_{i, \xi} \, dx \rightarrow 0 \quad \mbox{and} \quad \int_{\R} \Big( {\E \tilde{z}_{i, \xi} - \NN(\phi) \tilde{z}_{i, \xi}} \Big) \, dx \rightarrow \int_{\R} \Big( {\E Z_{i, \mu, \xi} - \NN(\phi) Z_{i, \mu, \xi}} \Big) \, dx,
$$
as~$r \rightarrow +\infty$. Therefore,~$d_i = 0$ if and only if
$$
\int_{\R} \E Z_{i, \mu, \xi} \, dx = \int_{\R} \NN(\phi) Z_{i, \mu, \xi} \, dx.
$$

We now show that the term on the right is, up to a constant, a partial derivative of the function~$\Gamma$. We change variables and write
$$
\Gamma(\xi, \mu) = \frac{\mu}{\pi} \int_\R \frac{\kappa(x)}{\mu^2 + (x - \xi)^2} \, dx.
$$
Consequently, recalling~\eqref{Ealternateform} and~\eqref{Zdefs} we have that
\begin{equation} \label{eql29}
\begin{aligned}
\nabla \Gamma(\xi, \mu) & = \frac{1}{\pi} \left( \int_\R \frac{2 \mu (x - \xi)}{\big( {\mu^2 + (x - \xi)^2} \big)^2} \, \kappa(x) \, dx, \int_\R \frac{(x - \xi)^2 - \mu^2}{\big( {\mu^2 + (x - \xi)^2} \big)^2} \, \kappa(x) \, dx \right) \\
& = \frac{1}{2 \pi \varepsilon} \left( \int_\R \E Z_{1, \mu, \xi} \, dx, \int_\R \E Z_{0, \mu, \xi} \, dx \right).
\end{aligned}
\end{equation}
From this, the claim follows.
\end{proof}

Thanks to the previous lemma, we can now finish the proof of Theorem~\ref{mainthm}.

\begin{proof}[Completion of the proof of Theorem~\ref{mainthm}]
Let~$(\xi_\star, \mu_\star) \in \R \times (0, +\infty)$ be a non-degenerate critical point of~$\Gamma$. Then,
$$
\nabla \Gamma(\xi, \mu) = \left( \xi - \xi_\star, \mu - \mu_\star \right) M + \mathcal{R}_1(\xi, \mu), \quad \mbox{with } |\mathcal{R}_1(\xi, \mu)| \le C \Big( {(\xi - \xi_\star)^2 + (\mu - \mu_\star)^2} \Big),
$$
for all~$(\xi, \mu) \in \overline{B}_\delta := \overline{B_\delta \big( {(\xi_\star, \mu_\star)} \big)}$, for some constant~$C > 0$, some radius~$\delta \in \left( 0, \frac{\mu_\star}{2} \right]$, and some invertible matrix~$M \in \Mat_2(\R)$. Set
$$
\mathcal{R}_2(\xi, \mu) := \frac{1}{2 \pi \varepsilon} \left( \int_{\R} \NN(\phi) Z_{1, \mu, \xi} \, dx, \int_{\R} \NN(\phi) Z_{0, \mu, \xi} \, dx \right).
$$
In light of Proposition~\ref{mainnonlinprop} and Lemma~\ref{dvanishlem}, the claim of Theorem~\ref{mainthm} boils down to showing that the map
$$
\Xi(\xi, \mu) := (\xi_\star, \mu_\star) + \big( {\mathcal{R}_2(\xi, \mu) - \mathcal{R}_1(\xi, \mu)} \big) M^{-1}
$$
admits a fixed point~$(\xi_\varepsilon, \mu_\varepsilon) \in \overline{B}_\delta$ for every small~$\varepsilon$. But this is a consequence of Brouwer's fixed point theorem, since~$\Xi$ is continuous (thanks to Lemma~\ref{continuitylem} and dominated convergence) and maps~$\overline{B}_\delta$ into itself, provided~$\varepsilon$ and~$\delta$ are suitably small. Indeed, using estimates~\eqref{Nbound} and~\eqref{nonlinLinftybound}, we compute
\begin{align*}
\left| \Xi(\xi, \mu) - (\xi_\star, \mu_\star) \right| & \le C \Big( {|\mathcal{R}_1(\xi, \mu)| + |\mathcal{R}_2(\xi, \mu)|} \Big) \\
& \le C \left( (\xi - \xi_\star)^2 + (\mu - \mu_\star)^2 + \frac{\| \NN(\phi) \|_{L^\infty_{\sigma, \xi}(\R)}}{\varepsilon} \int_\R \frac{dx}{(1 + |x - \xi|)^{1 + \sigma}} \right) \\
& \le C ( \delta^2 + \varepsilon) \le \delta,
\end{align*}
choosing, for instance,~$\delta = \sqrt{\varepsilon}$ and taking~$\varepsilon$ small enough. The proof is thus concluded.
\end{proof}

We conclude the section with the

\begin{proof}[Proof of Corollary \ref{corCMepsilon}]
The result immediately follows from Theorem \ref{mainthm} arguing as in  \cite[Section 4.1]{AL22} (see also \cite[page 5]{AL22}).
\end{proof}

\section{Proof of Theorem \ref{optimality}} \label{sect3}
 


\begin{proof}[Proof of Theorem \ref{optimality}]
Let~$\sigma \in (0,1)$ be fixed. We also fix~$\epsilon_0 \in (0,1)$ small enough to ensure that~$C_0 \epsilon^{\alpha} \leq \sigma/2$,~$|\xi_{\epsilon}-\xi| \leq 1$, and~$|\mu_{\epsilon} - \mu| \leq \mu/10$ for all~$\epsilon \in (0, \epsilon_0)$. Let now~$\epsilon \in (0,\epsilon_0)$ be fixed. Arguing as at the beginning of Section~\ref{thm1.1proofsec}, it is immediate to see that~$\phi_{\epsilon}$ is a solution to~\eqref{maineqforphi} with~$(\xi,\mu) = (\xi_{\epsilon},\mu_{\epsilon})$. Testing~\eqref{maineqforphi} against the translations~$\tilde{z}_i(\cdot - \xi_{\epsilon})$ of the functions constructed in Lemma~\ref{2funclem}, we obtain 
$$
 \int_{\R}  {\phi_{\epsilon} L_{\mu_{\epsilon}, \xi_{\epsilon}} \tilde{z}_{i, \xi_{\epsilon}} \, dx = \int_{\R} \Big( {- \E \tilde{z}_{i, \xi_{\epsilon}} + \NN(\phi_{\epsilon}) \tilde{z}_{i, \xi_{\epsilon}}}} \Big) \, dx,
$$
for~$i = 0, 1$ and provided~$r$ is sufficiently large. Arguing as in the proof of Lemma~\ref{dvanishlem}, we send~$r$ to infinity and get that
$$
\int_{\R} \E Z_{i, \mu_{\epsilon}, \xi_{\epsilon}} \, dx = \int_{\R} \NN(\phi_\epsilon) Z_{i, \mu_{\epsilon}, \xi_{\epsilon}} \, dx, \quad \textup{ for } i = 0,1.
$$
Also, by~\eqref{eql29}, we know that
\begin{align*}
\nabla \Gamma(\xi_{\epsilon}, \mu_{\epsilon}) = \frac{1}{2 \pi \varepsilon} \left( \int_\R \E Z_{1, \mu_{\epsilon}, \xi_{\epsilon}} \, dx, \int_\R \E Z_{0, \mu_{\epsilon}, \xi_{\epsilon}} \, dx \right).
\end{align*}
Thus, for all~$\epsilon \in (0,\epsilon_0)$ we have that
\begin{equation} \label{nablagammaoptimality}
\nabla \Gamma(\xi_{\epsilon}, \mu_{\epsilon}) = \frac{1}{2 \pi \varepsilon} \left( \int_{\R} \NN(\phi_{\epsilon}) Z_{1, \mu_{\epsilon}, \xi_{\epsilon}} \, dx, \int_{\R} \NN(\phi_\epsilon) Z_{0, \mu_{\epsilon}, \xi_{\epsilon}} \, dx \right).
\end{equation}
On the other hand, using~\eqref{estimateNNphi}, we easily get
\begin{equation} \label{nonlinearoptimality}
\begin{aligned}
& \left| \int_{\R} \NN(\phi_{\epsilon}) Z_{i,\mu_{\epsilon},\xi_{\epsilon}}\, dx \right| \\
& \quad \leq \int_{\R} \frac{ \mu_{\epsilon}}{\mu_{\epsilon}^2 + (x - \xi_{\epsilon})^2} \bigg( \frac{e^{|\phi_{\epsilon}(x)|}}{2} \, |\phi_{\epsilon}(x)|^2 + \varepsilon \| \kappa \|_{L^\infty(\R)} e^{|\phi_{\epsilon}(x)|} |\phi_{\epsilon}(x)|  \bigg) |Z_{i,\mu_{\epsilon},\xi_{\epsilon}}(x)| \, dx \\
& \quad \leq C \big( \epsilon^{2\alpha} + \|\kappa\|_{L^{\infty}(\R)} \epsilon^{1+\alpha} \big) \int_{\R}  \frac{ \mu_{\epsilon}}{\mu_{\epsilon}^2 + (x - \xi_{\epsilon})^2} (2+|x-\xi|)^{\sigma}  |Z_{i,\mu_{\epsilon},\xi_{\epsilon}}(x)|\, dx \\
& \quad \leq  C \big( \epsilon^{2\alpha} + \|\kappa\|_{L^{\infty}(\R)} \epsilon^{1+\alpha} \big) \int_{\R} \frac{(1+|y|)^{\sigma}}{1+y^2} \, |Z_{i,1,0}(y)|\, dy, \quad \textup{ for } i = 0,1.
\end{aligned}
\end{equation}
Note that the constants~$C > 0$ depend only on~$C_0$,~$\mu$,~$\xi$,~$\sigma$ and that may change from line to line. Taking into account the boundedness of~$Z_{i,1,0}$ (recall definition~\eqref{Zdefs}) and combining~\eqref{nablagammaoptimality}--\eqref{nonlinearoptimality} with the continuity of~$\nabla \Gamma$, the result immediately follows. 
\end{proof}

\section{The functional $\Gamma$. Proof of Theorem~\ref{main2} and Corollary~\ref{corMepsilon2}} \label{section4}

\noindent
In view of the previous sections, and in particular Theorem~\ref{mainthm}, the existence of solutions to \eqref{maineq} can be reduced to finding non-degenerate critical points~$(\xi,\mu)\in\R_{+}^2$ for $\Gamma$. Therefore, from now on we will focus on the study of the critical points of~$\Gamma$. We believe this problem to be of independent interest.

First of all, we study the asymptotic behaviour of~$\Gamma(\xi, \mu)$ as~$\mu$ tends to zero. The following result shows that the half-Laplacian of~$\kappa$ plays a major role. This is in contrast with the counterpart for the planar Liouville equation, where the standard Laplacian appears in the expansion, see~\cite[Lemma 10.2]{GP05}.


\begin{proposition}\label{asymptoticgamma}
Let~$\kappa \in C^{1, \alpha}(\R) \cap L^\infty(\R)$ for some~$\alpha > 0$. Then, for any fixed $\xi\in\R$, 
$$\Gamma(\xi,\mu)=\kappa(\xi)-\mu(-\Delta)^\frac12\kappa(\xi)+O\left(\mu^{1 + \alpha}\right), \quad \textup{ as } \mu \to 0^{+}.
$$
\end{proposition}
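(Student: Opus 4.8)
The plan is to subtract off the candidate leading terms and show directly from the representation~\eqref{Gamma0def} that the remainder is $O(\mu^{1+\alpha})$. First I would record the normalization $\frac{1}{\pi}\int_\R\frac{dy}{1+y^2}=1$, which turns the definition of $\Gamma$ into
\[
\Gamma(\xi,\mu)-\kappa(\xi)=\frac{1}{\pi}\int_\R\frac{\kappa(\xi+\mu y)-\kappa(\xi)}{1+y^2}\,dy.
\]
Next, performing the change of variables $z=\xi+\mu y$ in the principal value defining $(-\Delta)^{\frac12}\kappa(\xi)$ — the symmetric excision of $(\xi-\delta,\xi+\delta)$ becoming the symmetric excision of $(-\delta/\mu,\delta/\mu)$, so that the P.V. structure is preserved — gives
\[
\mu\,(-\Delta)^{\frac12}\kappa(\xi)=-\frac{1}{\pi}\,\PV\!\int_\R\frac{\kappa(\xi+\mu y)-\kappa(\xi)}{y^2}\,dy.
\]
Adding the two identities and using $\frac{1}{1+y^2}-\frac{1}{y^2}=-\frac{1}{y^2(1+y^2)}$ yields the compact formula
\[
\Gamma(\xi,\mu)-\kappa(\xi)+\mu\,(-\Delta)^{\frac12}\kappa(\xi)=-\frac{1}{\pi}\,\PV\!\int_\R\frac{\kappa(\xi+\mu y)-\kappa(\xi)}{y^2(1+y^2)}\,dy,
\]
so the whole statement reduces to estimating this last integral.

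To estimate it I would split the domain into $\{|y|\le 1/\mu\}$ and $\{|y|>1/\mu\}$. On the outer piece there is no singularity, so the principal value is an ordinary integral; bounding $|\kappa(\xi+\mu y)-\kappa(\xi)|\le 2\|\kappa\|_{L^\infty(\R)}$ and $\frac{1}{y^2(1+y^2)}\le y^{-4}$ gives a contribution of size $O(\mu^3)$, which is $O(\mu^{1+\alpha})$ for $\mu\le1$. On the inner piece one has $|\mu y|\le1$, so I would use the first-order Taylor expansion $\kappa(\xi+\mu y)-\kappa(\xi)=\mu y\,\kappa'(\xi)+R_\mu(y)$, where the $C^{1,\alpha}$ regularity of $\kappa$ near $\xi$ gives $|R_\mu(y)|\le C\,|\mu y|^{1+\alpha}$ with $C$ depending only on $\kappa$ (and $\xi$, which is fixed). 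The linear term $\mu y\,\kappa'(\xi)/(y^2(1+y^2))$ is odd, hence integrates to zero over every symmetric set $\{\delta<|y|\le1/\mu\}$ and contributes nothing to the principal value; the remainder is absolutely integrable (its modulus is $\lesssim\mu^{1+\alpha}|y|^{\alpha-1}$ near the origin) and contributes at most
\[
\frac{C}{\pi}\,\mu^{1+\alpha}\int_\R\frac{|y|^{\,\alpha-1}}{1+y^2}\,dy=C'\,\mu^{1+\alpha},
\]
the integral being finite since $|y|^{\alpha-1}$ is integrable near $0$ (as $\alpha>0$) and $|y|^{\alpha-3}$ near infinity (as $\alpha\le1$). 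Collecting the two bounds gives $\big|\Gamma(\xi,\mu)-\kappa(\xi)+\mu(-\Delta)^{\frac12}\kappa(\xi)\big|\le C\mu^{1+\alpha}$ for all $\mu\in(0,1]$, which is the claim.

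I do not expect a genuine obstacle here: the argument is a careful bookkeeping of a mildly singular integral. The one point that must be handled with care is the interplay between the principal value and the change of variables — and, relatedly, the fact that the $O(\mu)$ contribution of $\kappa'(\xi)$ cancels \emph{only} because the excision around $y=0$ is symmetric. For this reason it is important to keep the expression as a single principal value rather than splitting the integrand into two separately non-integrable pieces. One should also note at the outset that $(-\Delta)^{\frac12}\kappa(\xi)$ is well defined pointwise for $\kappa\in C^{1,\alpha}(\R)\cap L^\infty(\R)$, by the same odd-cancellation-near-the-pole plus decay-at-infinity reasoning; this legitimizes all the manipulations above.
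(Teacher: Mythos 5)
Your proof is correct and follows essentially the same route as the paper, merely recast in the scaled variable $y$ throughout: after the substitution $x=\xi+\mu y$, your kernel identity $\frac{1}{1+y^2}-\frac{1}{y^2}=-\frac{1}{y^2(1+y^2)}$ is exactly the paper's decomposition $\frac{\mu}{\mu^2+(x-\xi)^2}=\frac{\mu}{(x-\xi)^2}-\frac{\mu^3}{(x-\xi)^2\left(\mu^2+(x-\xi)^2\right)}$, and the two remainder integrals coincide. Your Taylor-plus-odd-cancellation treatment of that remainder is equivalent to the paper's symmetrization into the second difference $2\kappa(\xi)-\kappa(\xi+y)-\kappa(\xi-y)$, with both then using the $C^{1,\alpha}$ bound near the origin and the $L^\infty$ bound at infinity as in Remark~\ref{fraclapk}.
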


\begin{remark} \label{fraclapk}
As is well-known, the assumption~$\kappa \in C^{1, \alpha}(\R) \cap L^{\infty}(\R)$ ensures that the half-Laplacian of~$\kappa$ is well-defined at any point. Indeed, note that
\begin{equation*}
\begin{aligned}
\left|\int_\R \frac{2\kappa(x) - \kappa(x + y)-\kappa(x - y)}{y^2} \, dy\right| & \le 4 \|\kappa\|_{L^\infty(\R)} \int_{\R \setminus (- 1, 1)} \frac{dy}{y^2} + [\kappa']_{C^{0, \alpha}(\R)} \int_{(-1,1)} \frac{|y|^{1 + \alpha}}{y^2} \, dy \\
& \le 8 \|\kappa\|_{L^{\infty}(\R)} + \frac{2}{\alpha} \, [\kappa']_{C^{0, \alpha}(\R)}.
\end{aligned}
\end{equation*}
\end{remark}

\begin{proof}[Proof of Proposition \ref{asymptoticgamma}]
The proof is by direct calculation. We estimate the difference~$\Gamma(\xi,\mu)-\kappa(\xi)$. Since, for any~$\xi\in\R,\mu>0$, 
$$\int_\R e^{\mathcal U_{\mu,\xi}}=\int_\R\frac{2\mu}{\mu^2+(x-\xi)^2}\,dx=2\pi,$$
using a change of variable, we get that
\begin{align*}
\Gamma(\xi,\mu) - \kappa(\xi) & = \frac1{\pi}\int_\R \frac{\kappa(\xi + \mu y)}{1 + y^2} \, dy-\kappa(\xi)\frac1{\pi}\int_\R \frac{\mu}{\mu^2+(x-\xi)^2} \, dx \\
& = \frac1{\pi}\int_\R(\kappa(x)-\kappa(\xi))\frac{\mu}{\mu^2+(x-\xi)^2} \, dx \\
& = \frac1{\pi}\,\PV\int_\R(\kappa(x)-\kappa(\xi))\left(\frac{\mu}{(x-\xi)^2}-\frac{\mu^3}{(x-\xi)^2\left(\mu^2+(x-\xi)^2\right)}\right) \, dx\\
& = -\mu (-\Delta)^{\frac12} \kappa (\xi) + \frac1\pi \, \PV \int_{\R} \frac{\kappa(x)-\kappa(\xi)}{(x-\xi)^2} \frac{-\mu^3}{\mu^2+(x-\xi)^2} \, dx, \quad \textup{ for all } (\xi,\mu) \in \R_{+}^2.
\end{align*}
On the other hand, arguing as in Remark \ref{fraclapk}, we get that
\begin{align*}
&  \frac1\pi \, \PV \int_{\R} \frac{\kappa(x)-\kappa(\xi)}{(x-\xi)^2} \frac{-\mu^3}{\mu^2+(x-\xi)^2} \, dx = \frac{1}{2\pi} \int_{\R} \frac{2\kappa(\xi) - \kappa(\xi + y)-\kappa(\xi - y)}{y^2} \frac{\mu^3}{\mu^2+y^2} \, dy = O(\mu^{1 + \alpha}),
\end{align*}
as $\mu \to 0^{+}$. Hence, the claim readily follows. 
\end{proof}

The previous proposition allows to extend $\Gamma$ to some $\widetilde\Gamma$ defined also for non-positive values of $\mu$.

\begin{corollary}\label{cortildegamma}
Let~$\kappa \in C^{1, \alpha}(\R) \cap L^\infty(\R)$ for some~$\alpha > 0$. Then, the map~$\widetilde\Gamma:\R^2 \to\R$, defined by
\begin{equation}\label{tildegamma}
\widetilde\Gamma(\xi,\delta)=\left\{
\begin{array}{ll}\Gamma\left(\xi,\delta^2\right)&\mbox{if }\delta\ne0,\\\kappa(\xi)&\mbox{if }\delta=0,\end{array}\right.
\end{equation}
is of class~$C^1$ and, for any fixed~$\xi \in \R$, it follows that
$$
\widetilde\Gamma(\xi,\delta)=\kappa(\xi)-\delta^2(-\Delta)^\frac12\kappa(\xi)+O \! \left(|\delta|^{2 + 2 \alpha}\right), \quad \textup{ as } |\delta| \to 0.
$$
In particular, a point in the form~$(\xi,0)$ is critical for~$\widetilde\Gamma$ if and only if~$\xi$ is critical for~$\kappa$. Moreover:
\begin{itemize}
\item If $\xi$ is a local maximum for $\kappa$ and $(-\Delta)^\frac12\kappa(\xi)>0$, then $(\xi,0)$ is a local maximum for $\widetilde\Gamma$;
\item If $\xi$ is a local maximum for $\kappa$ and $(-\Delta)^\frac12\kappa(\xi)<0$, then $(\xi,0)$ is a saddle point for $\widetilde\Gamma$;
\item If $\xi$ is a local minimum for $\kappa$ and $(-\Delta)^\frac12\kappa(\xi)>0$, then $(\xi,0)$ is a saddle point for $\widetilde\Gamma$;
\item If $\xi$ is a local minimum for $\kappa$ and $(-\Delta)^\frac12\kappa(\xi)<0$, then $(\xi,0)$ is a local minimum for $\widetilde\Gamma$.
\end{itemize}
\end{corollary}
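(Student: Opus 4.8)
The plan is to read off all the statements of Corollary~\ref{cortildegamma} directly from the asymptotic expansion in Proposition~\ref{asymptoticgamma}, together with the definition~\eqref{tildegamma} of~$\widetilde\Gamma$. First I would verify the expansion for~$\widetilde\Gamma$: by definition, for $\delta\ne 0$ we have $\widetilde\Gamma(\xi,\delta)=\Gamma(\xi,\delta^2)$, so Proposition~\ref{asymptoticgamma} with $\mu=\delta^2$ gives $\widetilde\Gamma(\xi,\delta)=\kappa(\xi)-\delta^2(-\Delta)^{\frac12}\kappa(\xi)+O(|\delta|^{2+2\alpha})$ as $|\delta|\to 0$, and this clearly also holds (trivially) at $\delta=0$ since $\widetilde\Gamma(\xi,0)=\kappa(\xi)$.

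Next I would establish the $C^1$ regularity. Away from $\{\delta=0\}$ this is immediate since $\Gamma$ is smooth in $\R^2_+$ and $\delta\mapsto\delta^2$ is smooth; the only issue is continuity of $\widetilde\Gamma$ and of its first derivatives across $\{\delta=0\}$. For $\delta\ne 0$, $\partial_\xi\widetilde\Gamma(\xi,\delta)=\partial_\xi\Gamma(\xi,\delta^2)$ and $\partial_\delta\widetilde\Gamma(\xi,\delta)=2\delta\,\partial_\mu\Gamma(\xi,\delta^2)$. Continuity of $\widetilde\Gamma$ at $\delta=0$ follows from the expansion. For $\partial_\delta$, the expansion gives $\partial_\delta\widetilde\Gamma(\xi,\delta)=-2\delta(-\Delta)^{\frac12}\kappa(\xi)+O(|\delta|^{1+2\alpha})\to 0$ as $\delta\to 0$, matching the value one expects for an even-in-$\delta$ function, so $\partial_\delta\widetilde\Gamma$ extends continuously by $0$ on $\{\delta=0\}$. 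For $\partial_\xi$ one differentiates the expansion in $\xi$ — here one needs that the error term $O(|\delta|^{2+2\alpha})$ is actually $C^1$ in $\xi$ with a uniformly controlled $\xi$-derivative; this can be seen by differentiating under the integral sign in the representation $\Gamma(\xi,\mu)=\frac\mu\pi\int_\R\frac{\kappa(x)}{\mu^2+(x-\xi)^2}\,dx$ (equivalently differentiating $\kappa$ rather than the kernel after a change of variables), using $\kappa\in C^{1,\alpha}$, so that $\partial_\xi\widetilde\Gamma(\xi,\delta)\to\kappa'(\xi)$ as $\delta\to0$. This gives $\widetilde\Gamma\in C^1(\R^2)$ with $\nabla\widetilde\Gamma(\xi,0)=(\kappa'(\xi),0)$, whence $(\xi,0)$ is critical for $\widetilde\Gamma$ iff $\kappa'(\xi)=0$.

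Finally, for the four bullet points I would use the local description of $\widetilde\Gamma$ near a critical point $(\xi_0,0)$. Write $a:=(-\Delta)^{\frac12}\kappa(\xi_0)$. Since $\widetilde\Gamma(\xi,\delta)-\widetilde\Gamma(\xi_0,0)=(\kappa(\xi)-\kappa(\xi_0))-\delta^2 a+o(\delta^2)+[\text{error in }\xi]$, I would combine two facts: along the slice $\xi=\xi_0$ one has $\widetilde\Gamma(\xi_0,\delta)-\kappa(\xi_0)=-a\delta^2+O(|\delta|^{2+2\alpha})$, which is a strict local max in $\delta$ if $a>0$ and a strict local min if $a<0$; along the slice $\delta=0$ one has $\widetilde\Gamma(\xi,0)=\kappa(\xi)$, which has a strict local max (resp. min) at $\xi_0$ according to the hypothesis on $\kappa$. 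If the two slices give opposite behaviour (local max of $\kappa$ with $a<0$, or local min of $\kappa$ with $a>0$), then $(\xi_0,0)$ is neither a local max nor a local min, i.e. a saddle; this handles bullets two and three. For bullets one and four, where both slices agree, I would argue that the full function has the same extremal behaviour: e.g. if $\xi_0$ is a local max of $\kappa$ and $a>0$, then for $(\xi,\delta)$ near $(\xi_0,0)$, $\widetilde\Gamma(\xi,\delta)\le\kappa(\xi_0)$; one way is to note $\widetilde\Gamma(\xi,\delta)=\Gamma(\xi,\delta^2)=\frac1\pi\int_\R\frac{\kappa(\xi+\delta^2 y)}{1+y^2}\,dy$ is an average of values of $\kappa$, hence $\le\max_{|t-\xi_0|\text{ small}}\kappa(t)=\kappa(\xi_0)$ once $\xi$ and $\delta$ are small enough that all points $\xi+\delta^2 y$ with the dominant mass of the Poisson kernel lie in the neighbourhood where $\kappa\le\kappa(\xi_0)$ — combined with the expansion to control the tail contribution $\|\kappa\|_{L^\infty}\int_{|y|>1/|\delta|^s}\frac{dy}{1+y^2}$, which is $o(\delta^2)$ and is dominated by the strict quadratic decay $-a\delta^2$; the minimum case is symmetric. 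The main obstacle is precisely this last point: near $(\xi_0,0)$ the behaviour in the $\delta$-direction is quadratic while in the $\xi$-direction it may be only quadratic as well but governed by $\kappa''(\xi_0)$ which could vanish or be small, so one cannot simply invoke a non-degenerate second-order Taylor expansion; the clean route is the averaging/maximum-principle argument above, which is robust and does not require non-degeneracy of $\kappa$ at $\xi_0$.
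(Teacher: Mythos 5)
Your argument is correct and is genuinely more careful than the paper's. The paper's proof of this corollary is a single sentence: ``The result is an immediate consequence of the asymptotic expansion given in Proposition~\ref{asymptoticgamma} and the second derivative test.'' You are right to be suspicious of that phrasing. Under the stated hypothesis~$\kappa \in C^{1,\alpha}(\R)\cap L^\infty(\R)$, the map~$\widetilde\Gamma$ is only~$C^1$ across~$\{\delta=0\}$ (and~$\kappa$ itself need not be~$C^2$, let alone have non-degenerate extrema), so the Hessian-based second derivative test is not literally applicable; what Proposition~\ref{asymptoticgamma} supplies is a second-order Taylor-type expansion in~$\delta$, not~$C^2$ regularity. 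Your route—classify~$(\xi_0,0)$ directly from the expansion, treating the~$\xi$-slice via the local extremality of~$\kappa$ and the~$\delta$-slice via the sign of~$(-\Delta)^{1/2}\kappa(\xi_0)$—is the correct way to make the authors' sketch rigorous, and it has the bonus of working without any non-degeneracy assumption on the extrema of~$\kappa$. In the paper's later applications (Theorems~\ref{thmdeg} and~\ref{main2}) one has~$\kappa \in C^2_{*,\beta}(\R)$ and non-degenerate extrema, in which case the expansion upgrades to~$\widetilde\Gamma(\xi,\delta)=\kappa(\xi_0)+\tfrac12\kappa''(\xi_0)(\xi-\xi_0)^2-a\delta^2+o\bigl(|\xi-\xi_0|^2+\delta^2\bigr)$ and the second derivative test (in its expansion form) does apply; so the paper's phrasing is defensible in context but imprecise for the corollary as stated.

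Two small remarks. First, in your~$C^1$ discussion you differentiate the asymptotic expansion to conclude~$\partial_\delta\widetilde\Gamma(\xi,\delta)=-2\delta(-\Delta)^{1/2}\kappa(\xi)+O(|\delta|^{1+2\alpha})$; an expansion of a function does not in general give an expansion of its derivative, so this needs a separate justification (for instance, via~$\partial_\delta\widetilde\Gamma=2\delta\,\partial_\mu\Gamma(\xi,\delta^2)$ together with the fact that~$\partial_\mu\Gamma(\cdot,\mu)$ converges locally uniformly to~$-(-\Delta)^{1/2}\kappa$ as~$\mu\to0^+$, which holds because~$\partial_\mu\Gamma$ is the harmonic conjugate of~$\partial_\xi\Gamma$ and~$\kappa\in C^{1,\alpha}$). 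Second, for the local max/min bullets the averaging/maximum-principle argument you invoke is more than is needed; the expansion alone suffices once one notes that the error constant in Proposition~\ref{asymptoticgamma} depends only on~$\|\kappa\|_{L^\infty}$ and~$[\kappa']_{C^{0,\alpha}}$ and is hence uniform in~$\xi$: writing~$\widetilde\Gamma(\xi,\delta)-\kappa(\xi_0)=\bigl(\kappa(\xi)-\kappa(\xi_0)\bigr)-\delta^2(-\Delta)^{1/2}\kappa(\xi)+O(|\delta|^{2+2\alpha})$ uniformly in~$\xi$, and using continuity of~$(-\Delta)^{1/2}\kappa$ to keep~$(-\Delta)^{1/2}\kappa(\xi)\geq a/2>0$ near~$\xi_0$, both summands are~$\leq 0$ for~$(\xi,\delta)$ close to~$(\xi_0,0)$, giving the local maximum directly.
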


\begin{proof}
The result is an immediate consequence of the asymptotic expansion given in Proposition~\ref{asymptoticgamma} and the second derivative test. 
\end{proof}

In order to get some information on the critical points of~$\Gamma$, we will compute its Brouwer degree. To that end, we need some extra assumptions on~$\kappa$ to avoid critical points at infinity. We require $\kappa$ to lie in $C^2_{*,\beta}(\R)$ (recall definition~\eqref{c*}-\eqref{c*norm}) and to satisfy \eqref{kprimneg} and \eqref{kintpos}.

\begin{lemma}\label{critinfinity}
Assume that $\kappa \in C^2_{*,\beta}(\R)$ for some~$\beta \in (0,1)$ satisfies~\eqref{kprimneg} and~\eqref{kintpos}. Then, there exists a constant~$R_1 > 0$ such that~$\nabla\Gamma(\xi,\mu)\cdot(\xi,\mu)<0$ for all~$(\xi, \mu) \in \R_{+}^2$ with~$|\xi| + \mu \geq R_1$.
\end{lemma}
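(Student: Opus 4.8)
The plan is to reduce the directional derivative $\nabla\Gamma(\xi,\mu)\cdot(\xi,\mu)$ to a single weighted integral of $x\kappa'(x)$ and then exploit the sign information in~\eqref{kprimneg}--\eqref{kintpos}. Writing $\Gamma(\xi,\mu)=\frac{\mu}{\pi}\int_\R\frac{\kappa(x)}{\mu^2+(x-\xi)^2}\,dx$ and using the expression for $\nabla\Gamma$ recorded in~\eqref{eql29}, a direct computation gives $\nabla\Gamma(\xi,\mu)\cdot(\xi,\mu)=\frac{\mu}{\pi}\int_\R\frac{(x^2-\xi^2-\mu^2)\kappa(x)}{(\mu^2+(x-\xi)^2)^2}\,dx$. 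Since $\frac{x^2-\xi^2-\mu^2}{(\mu^2+(x-\xi)^2)^2}=-\partial_x\!\big(\frac{x}{\mu^2+(x-\xi)^2}\big)$ and $\kappa$ is bounded, an integration by parts (with vanishing boundary terms) yields the clean identity
\[
\nabla\Gamma(\xi,\mu)\cdot(\xi,\mu)=\frac{\mu}{\pi}\int_\R\frac{x\,\kappa'(x)}{\mu^2+(x-\xi)^2}\,dx=:J(\xi,\mu);
\]
equivalently this is the Euler relation $(\xi\partial_\xi+\mu\partial_\mu)\Gamma=H[x\kappa']$ ($H$ = harmonic extension), reflecting that $H$ commutes with dilations. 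Put $g(x):=x\kappa'(x)$. From $\kappa\in C^2_{*,\beta}(\R)$ one has $g\in C(\R)$ with $|g(x)|\le C_\kappa(1+|x|)^{-1-\beta}$, hence $g\in L^1(\R)$; by~\eqref{kprimneg}, $g<0$ on $\{|x|>R_0\}$; and by~\eqref{kintpos}, $m:=-\int_\R g\,dx>0$.

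I then show $J(\xi,\mu)<0$ once $|\xi|+\mu$ is large, in two complementary regimes. Fix once and for all $T>R_0$ with $\int_{|x|>T}|g|\,dx<m/2$; then $\int_{|x|\le T}g\,dx=-m+\int_{|x|>T}|g|\,dx<-m/2$. \emph{Large $|\xi|$, arbitrary $\mu>0$:} split $J(\xi,\mu)$ into the integrals over $\{|x|\le T\}$ and $\{|x|>T\}$; the latter is $\le 0$ because $g\le 0$ there. For $|x|\le T$ and $|\xi|\ge 6T$ one has $|x^2-2x\xi|\le 3T|\xi|\le\tfrac12(\mu^2+\xi^2)$, so $\frac{1}{\mu^2+(x-\xi)^2}=\frac{1+\theta(x)}{\mu^2+\xi^2}$ with $|\theta(x)|\le 6T/|\xi|$, and the first integral equals $\frac{\mu}{\pi(\mu^2+\xi^2)}\big(\int_{|x|\le T}g\,dx+E\big)$ with $|E|\le 6T\|g\|_{L^1}/|\xi|$. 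Taking $|\xi|$ large enough that $|E|<m/4$ gives $J(\xi,\mu)<-\frac{\mu}{\pi(\mu^2+\xi^2)}\cdot\frac{m}{4}<0$. \emph{Bounded $|\xi|$, large $\mu$:} write $J(\xi,\mu)=\frac{1}{\pi\mu}\int_\R\frac{g(x)}{1+((x-\xi)/\mu)^2}\,dx$ and estimate, for any $T'>0$,
\[
\Big|\int_\R\tfrac{g(x)}{1+((x-\xi)/\mu)^2}\,dx-\int_\R g\,dx\Big|\le\int_\R|g(x)|\min\!\big(1,\tfrac{(x-\xi)^2}{\mu^2}\big)\,dx\le\tfrac{(T'+|\xi|)^2}{\mu^2}\|g\|_{L^1}+\int_{|x|>T'}|g|\,dx ;
\]
choosing $T'$ with $\int_{|x|>T'}|g|<m/4$ and then $\mu$ large makes this $<m/2$, so $\int_\R\frac{g}{1+((x-\xi)/\mu)^2}\,dx<-m/2<0$ and $J(\xi,\mu)<0$. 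Since every $(\xi,\mu)$ with $|\xi|+\mu$ larger than a suitable $R_1$ (built from the two thresholds) lies in one of these two cases, the lemma follows.

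The delicate case is the first one, where no bound on $\mu$ is available, so the estimate must be uniform in $\mu>0$. The mechanism is that over the \emph{fixed} compact set $\{|x|\le T\}$ the kernel $(\mu^2+(x-\xi)^2)^{-1}$ is, for $|\xi|$ large, comparable to the constant $(\mu^2+\xi^2)^{-1}$ with multiplicative error $O(T/|\xi|)$ \emph{independent of $\mu$}; this lets one extract the sign of $\int_{|x|\le T}g\,dx$, which is negative precisely by~\eqref{kintpos}, after the interval $[-R_0,R_0]$ on which $g$ may be positive has been absorbed into $\{|x|\le T\}$, while the complementary region contributes only with the favorable sign guaranteed by~\eqref{kprimneg}. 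What remains is pure bookkeeping: the truncation radii $T,T'$ depend only on $\kappa$ and must be chosen before the thresholds on $|\xi|$ and $\mu$.
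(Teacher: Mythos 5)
Your proof is correct and rests on the same pivotal identity that the paper uses, namely
\[
\nabla\Gamma(\xi,\mu)\cdot(\xi,\mu)\;=\;\frac{\mu}{\pi}\int_\R\frac{z\,\kappa'(z)}{\mu^2+(z-\xi)^2}\,dz ,
\]
which you arrive at by integrating by parts in the formula~\eqref{eql29} (the paper gets the same thing by differentiating the Poisson formula~\eqref{Gamma0def} under the integral sign and substituting $z=\xi+\mu y$; your observation that this is the Euler relation $(\xi\partial_\xi+\mu\partial_\mu)\Gamma = H[x\kappa']$ is a nice conceptual gloss). Where the two arguments diverge is in how the sign is extracted. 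The paper attempts a single one-shot estimate: split $z\kappa'(z)$ into positive and negative parts, use that $(z\kappa')_+$ is supported in $[-R_0,R_0]$ by~\eqref{kprimneg}, and bound the Poisson kernel from above on $[-R_0,R_0]$ by $\bigl(|(\xi,\mu)|-R_0\bigr)^{-2}$ and from below by $\bigl(|(\xi,\mu)|+R_0\bigr)^{-2}$, letting~\eqref{kintpos} dominate once $|(\xi,\mu)|$ is large. Your approach instead fixes a truncation radius $T>R_0$ depending only on $\kappa$, discards outright the integral over $\{|x|>T\}$ (where $g=x\kappa'$ has the favourable sign), and then handles the compact piece $\{|x|\le T\}$ in two complementary regimes: for $|\xi|$ large (uniformly in $\mu$) you replace the kernel on the fixed compact by the constant $(\mu^2+\xi^2)^{-1}$ with a multiplicative error $O(T/|\xi|)$ that is $\mu$-independent; for $|\xi|$ bounded and $\mu$ large you rescale by $\mu$ and let the kernel flatten to $1$. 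This is slightly longer than the paper's argument, but it buys you something real: the lower bound $\bigl(|(\xi,\mu)|+R_0\bigr)^{-2}$ on the kernel is only valid for $|z|\le R_0$, yet the paper applies it to the integral of $(z\kappa')_-$ over all of $\R$, whose support is unbounded; repairing that requires introducing exactly the kind of fixed truncation radius you use. Your two-case decomposition makes this truncation explicit from the outset, so the tail estimate for the unboundedly supported negative part is airtight, and the case split turns the uniformity in $\mu$ from an implicit claim into a one-line bound $|\theta(x)|\le 6T/|\xi|$. All the quantitative steps (the kernel linearisation for $|\xi|\ge 6T$, the dominated-convergence-style estimate $\min(1,(x-\xi)^2/\mu^2)$ for large $\mu$, and the final choice of $R_1$ from the two thresholds) check out.
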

\begin{proof}
We follow the argument of~\cite[Lemma 3.3]{AAP99}. Differentiating under the integral sign in~\eqref{Gamma0def} and changing variables, we get
$$
\nabla \Gamma(\xi, \mu) \cdot (\xi, \mu) = \frac{1}{\pi} \int_{\R} \frac{\kappa'(\xi + \mu y)(\xi + \mu y)}{1 + y^2} \, dy = \frac{\mu}{\pi} \int_{\R} \frac{\kappa'(z) z}{\mu^2 + (z - \xi)^2} \, dz.
$$
Now, recalling~\eqref{kprimneg}, for~$R \ge 2 R_0$ and~$|(\xi, \mu)| \ge R$ we have that
\begin{align*}
\int_{\R} \frac{\kappa'(z) z}{\mu^2 + (z - \xi)^2} \, dz & = \int_{- R_0}^{R_0} \frac{(\kappa'(z) z)_+}{|(\xi, \mu) - (z, 0)|^2} \, dz - \int_{\R} \frac{(\kappa'(z) z)_-}{|(\xi, \mu) - (z, 0)|^2} \, dz \\
& \le \frac{1}{\left( |(\xi, \mu)| - R_0 \right)^2} \int_{- R_0}^{R_0} (\kappa'(z) z)_+ \, dz - \frac{1}{\left( |(\xi, \mu)| + R_0 \right)^2} \int_{\R} (\kappa'(z) z)_- \, dz \\
& = \frac{1}{\left( |(\xi, \mu)| - R_0 \right)^2} \left\{ \int_{\R} \kappa'(z) z \, dz + \left( 1 - \frac{\left( |(\xi, \mu)| - R_0 \right)^2}{\left( |(\xi, \mu)| + R_0 \right)^2} \right) \int_{\R} (\kappa'(z) z)_- \, dz \right\} \\
& \le \frac{1}{\left( |(\xi, \mu)| - R_0 \right)^2} \left\{ \int_{\R} \kappa'(z) z \, dz + \left( 1 - \frac{\left( R - R_0 \right)^2}{\left( R + R_0 \right)^2} \right) \int_{\R} (\kappa'(z) z)_- \, dz \right\}.
\end{align*}
Thanks to hypothesis~\eqref{kintpos}, the quantity inside curly brackets is negative provided~$R$ is large enough. The proof is thus complete.
\end{proof}

The following result is crucial to deduce existence of solutions to~\eqref{maineq} from conditions on~$\kappa$. To this aim, we need to introduce some additional assumptions concerning~$\kappa \in C_{*,\beta}^2(\R)$. On top of~\eqref{kprimneg} and~\eqref{kintpos}, we require~$\kappa$ to satisfy that:
\begin{equation} \label{kmorse}
\textup{If } \kappa'(x) = 0, \textup{ then } \kappa''(x) \neq 0 \textup{ and } (-\Delta)^{\frac12}\kappa(x) \neq 0.
\end{equation}

\begin{theorem}\label{thmdeg}
Assume that~$\kappa \in C^{2}_{*,\beta}(\R)$ for some~$\beta \in (0, 1)$ satisfies~\eqref{kprimneg},~\eqref{kintpos}, and~\eqref{kmorse}. Moreover, let~$M^+_\kappa,m^+_\kappa \in \N \cup \{ 0 \}$ be given by
\begin{align*}
M^+_\kappa&:=\#\left\{\mbox{local maximum points } x \mbox{ for }\kappa\mbox{ satisfying }(-\Delta)^\frac12\kappa(x)>0\right\},\\
m^+_\kappa&:=\#\left\{\mbox{local minimum points } x \mbox{ for }\kappa\mbox{ satisfying } (-\Delta)^\frac12\kappa(x)>0\right\}.
\end{align*}
Then, there exists a constant~$R > 0$ such that~$\Gamma$ has no critical points outside~$B_R \cap \R^2_+$ and
\begin{equation}\label{degformula}
\deg \big( {\nabla\Gamma, B_R \cap \R_+^2, 0} \big) = 1 - M^+_\kappa + m^+_\kappa.
\end{equation}
In particular, if~$M^+_\kappa-m^+_\kappa\ne1$, then~$\Gamma$ has at least one critical point.
\end{theorem}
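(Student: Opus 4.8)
The plan is to compute the Brouwer degree in \eqref{degformula} as the winding number of $\nabla\Gamma/|\nabla\Gamma|$ along the boundary of the half-disk $B_R\cap\R^2_+$, reading off the answer from the boundary values of $\nabla\Gamma$ on the axis $\{\mu=0\}$. First I would set the stage. By Lemma~\ref{critinfinity} every critical point of $\Gamma$ lies in $B_{R_1}\cap\R^2_+$, so any $R\ge R_1$ already gives the first assertion and makes $\deg(\nabla\Gamma,B_R\cap\R^2_+,0)$ independent of $R$. Moreover, by Proposition~\ref{asymptoticgamma} (differentiating the expansion there in $\mu$ and letting $\mu\to0^+$, equivalently invoking the $C^1$-regularity of $\widetilde\Gamma$ from Corollary~\ref{cortildegamma}) the field $\nabla\Gamma$ extends continuously to $\overline{\R^2_+}$, with $\nabla\Gamma(\xi,0)=\big(\kappa'(\xi),-(-\Delta)^{\frac{1}{2}}\kappa(\xi)\big)$. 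By \eqref{kmorse} this never vanishes on the axis, while \eqref{kprimneg} (so that $\kappa'(\pm R)\neq0$) together with Lemma~\ref{critinfinity} (so that $\nabla\Gamma\cdot(\xi,\mu)<0$, in particular $\nabla\Gamma\neq0$, on the circular part of the boundary) shows that $\nabla\Gamma$ is nowhere zero on $\partial(B_R\cap\R^2_+)$ for $R\ge\max\{R_0,R_1\}$ large. Thus the degree is well defined and equals $\tfrac{1}{2\pi}$ times the total change of $\arg(\nabla\Gamma)$ along $\partial(B_R\cap\R^2_+)$, oriented with the half-disk on the left.

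Next I would describe the critical set of $\kappa$. By \eqref{kprimneg} all stationary points of $\kappa$ belong to $(-R_0,R_0)$, so---being non-degenerate by \eqref{kmorse}---there are finitely many, say $\xi_1<\dots<\xi_n$. Since $\kappa'>0$ on $(-\infty,\xi_1)$ and $\kappa'<0$ on $(\xi_n,+\infty)$ by \eqref{kprimneg}, and two consecutive non-degenerate stationary points of $\kappa$ are necessarily of opposite type, the $\xi_j$ alternate between local maxima and minima, the first and last being maxima; in particular $n$ is odd and the number of maxima of $\kappa$ exceeds the number of minima by one.

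Then comes the computation. Split $\partial(B_R\cap\R^2_+)$ into the arc $A_R=\{\xi^2+\mu^2=R^2,\ \mu\ge0\}$ and the segment $I_R=\{(\xi,0):|\xi|\le R\}$. On $A_R$, $\nabla\Gamma$ lies in the open half-plane $\{v:v\cdot(\xi,\mu)<0\}$ (Lemma~\ref{critinfinity}), hence is homotopic through non-vanishing fields to the inward normal $-(\xi,\mu)/R$, which turns by exactly $+\pi$ along $A_R$. On $I_R$, away from small disjoint neighbourhoods of the $\xi_j$ one deforms $\big(\kappa'(\xi),-(-\Delta)^{\frac{1}{2}}\kappa(\xi)\big)$ to $\big(\kappa'(\xi),0\big)$ without creating zeros (since $\kappa'(\xi)\neq0$ there), so that this ``frozen'' field contributes no rotation; near $\xi_j$ one keeps the original field, which by the preceding and Taylor's theorem equals $\big(\kappa''(\xi_j)(\xi-\xi_j),-(-\Delta)^{\frac{1}{2}}\kappa(\xi_j)\big)+o(|\xi-\xi_j|)$, with second component nonzero (by \eqref{kmorse}) of constant sign, so that crossing $\xi_j$ with $\xi$ increasing this vector turns by $\pi\,\sgn\big(\kappa''(\xi_j)\big)\sgn\big((-\Delta)^{\frac{1}{2}}\kappa(\xi_j)\big)=-\pi\,\sgn\big(\det D^2\widetilde\Gamma(\xi_j,0)\big)$; the two homotopies glue near the corners $(\pm R,0)$, where $\kappa'\neq0$. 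Summing contributions,
\[
\deg\big(\nabla\Gamma,B_R\cap\R^2_+,0\big)=\frac{1}{2}\Big(1+\sum_{j=1}^{n}\sgn\big(\kappa''(\xi_j)\big)\sgn\big((-\Delta)^{\tfrac{1}{2}}\kappa(\xi_j)\big)\Big).
\]
Finally, at a maximum one has $\kappa''(\xi_j)<0$, so the $j$-th summand is $-\sgn\big((-\Delta)^{\frac{1}{2}}\kappa(\xi_j)\big)$, while at a minimum it is $+\sgn\big((-\Delta)^{\frac{1}{2}}\kappa(\xi_j)\big)$; writing $M^-_\kappa,m^-_\kappa$ for the numbers of maxima, resp.\ minima, with $(-\Delta)^{\frac{1}{2}}\kappa<0$, the sum equals $(M^-_\kappa-M^+_\kappa)+(m^+_\kappa-m^-_\kappa)$, and using $M^+_\kappa+M^-_\kappa=m^+_\kappa+m^-_\kappa+1$ to eliminate $M^-_\kappa,m^-_\kappa$ it becomes $1-2M^+_\kappa+2m^+_\kappa$. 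Hence $\deg(\nabla\Gamma,B_R\cap\R^2_+,0)=1-M^+_\kappa+m^+_\kappa$, which is \eqref{degformula}, and a non-zero degree then forces $\nabla\Gamma$ to vanish in $B_R\cap\R^2_+$, giving the final assertion.

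The bulk of the work lies in the third paragraph: one must justify carefully the continuous extension of $\nabla\Gamma$ up to the axis together with its boundary trace, the triviality of the winding over the ``frozen'' arcs, and the gluing of the two homotopies across the corners $(\pm R,0)$. I would also point out that the natural shortcut---passing to the even extension $\widetilde\Gamma$ of Corollary~\ref{cortildegamma} and computing $\deg(\nabla\widetilde\Gamma,\cdot,0)$ over a large disk $\{\xi^2+\delta^2=\rho^2\}$ in the $(\xi,\delta)$-plane---does not work directly, since $\nabla\widetilde\Gamma$ need not point inward on such disks when $\kappa$ fails to decay at infinity; the half-disk in the $(\xi,\mu)$-plane is the right domain precisely because Lemma~\ref{critinfinity} controls $\nabla\Gamma$ on its circular part.
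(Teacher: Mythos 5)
Your proof is correct (modulo the technical points you flag in the penultimate paragraph) and takes a genuinely different route from the paper's. The paper works with the reflected function $\widetilde\Gamma(\xi,\delta) := \Gamma(\xi,\delta^2)$ on a full disk in $(\xi,\delta)$-coordinates: it shows $\deg(\nabla\widetilde\Gamma, B_R, 0) = 1$ by homotopy to the linear field $(\xi,\delta)\mapsto(-2\xi,-\delta)$, then invokes the decomposition $1 = 2\deg(\nabla\Gamma, D_+, 0) + \deg(\nabla\widetilde\Gamma, D_0, 0)$ over the strips $D_\pm, D_0$, and evaluates the axis contribution as a sum of indices using Corollary~\ref{cortildegamma} and the identity $M^+_\kappa + M^-_\kappa = m^+_\kappa + m^-_\kappa + 1$. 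You instead compute $\deg(\nabla\Gamma, B_R\cap\R^2_+, 0)$ directly as the winding number of $\nabla\Gamma$ around $\partial(B_R\cap\R^2_+)$, splitting into the arc (contribution $\pi$ via Lemma~\ref{critinfinity}) and the flat segment (contribution $\pi\sum_j\sgn(\kappa''(\xi_j))\sgn((-\Delta)^{1/2}\kappa(\xi_j))$). Your route trades the substitution $\mu=\delta^2$ and the degree decomposition for a hands-on winding computation, and is arguably more transparent; the paper's route packages the axis contribution into the degree of a genuine plane vector field ($\nabla\widetilde\Gamma$ on $D_0$), computed by the standard index formula, which is less delicate than tracking angles along a boundary with corners. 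Both rely on the boundary trace $\nabla\Gamma(\xi,0^+)=\big(\kappa'(\xi),-(-\Delta)^{1/2}\kappa(\xi)\big)$, whose continuity up to the axis is most cleanly justified by Schauder estimates for the harmonic extension with $\kappa\in C^{1,\alpha}$, rather than by differentiating the pointwise expansion of Proposition~\ref{asymptoticgamma} in $\mu$.

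One remark on your closing paragraph: you claim that passing to the even extension $\widetilde\Gamma$ and computing $\deg(\nabla\widetilde\Gamma,\cdot,0)$ on a full disk ``does not work directly'' since $\nabla\widetilde\Gamma$ need not point inward. This is in fact exactly the paper's route, and it does work: Lemma~\ref{critinfinity} yields $\nabla\widetilde\Gamma(\xi,\delta)\cdot(2\xi,\delta)=2\,\nabla\Gamma(\xi,\delta^2)\cdot(\xi,\delta^2)<0$ on large circles, which controls $\nabla\widetilde\Gamma$ against the anisotropic vector $(2\xi,\delta)$ (not the radial $(\xi,\delta)$), and this suffices to homotope $\nabla\widetilde\Gamma$ to $(-2\xi,-\delta)$, a linear map of degree~$1$. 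Your observation that the naive homotopy to the inward normal fails is right; the fix is only a change of the homotopy target, not a change of domain.
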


\begin{remark} \label{remarkTopologicalDegree}
We refer to~\cite[Chapter 3]{AM07} for the standard notation concerning the Brouwer degree we are going to use. Let us just remind here that, given a continuous vector field~$V$, its index at an isolated zero~$p$ satisfies
$$
{\rm{ind}}(V, p) = \lim_{r \to 0} \deg \big( {V, B_r(p), 0} \big).
$$
Also, let us stress that the index of the gradient of a continuously differentiable function~$f: \Omega \subset \R^2 \to \R$ at a non-degenerate minimum or maximum point of~$f$ is equal to~$+1$, while at a non-degenerate saddle point it is equal to~$-1$.
\end{remark}


\begin{proof}[Proof of Theorem \ref{thmdeg}]
Recalling definition~\eqref{tildegamma}, we consider the maps~$X: \overline{\R^2_+} \to \R^2$ and~$\widetilde{X}: \R^2 \to \R^2$ given by
$$
X(\xi, \delta) := \nabla \Gamma(\xi, \delta^2) \quad \mbox{and} \quad \widetilde{X}(\xi, \delta) := \nabla \widetilde\Gamma(\xi, \delta) = \left( \partial_\xi \Gamma(\xi, \delta^2), 2 \delta \partial_{\mu} \Gamma(\xi, \delta^2) \right).
$$
Thanks to the regularity assumptions on~$\kappa$ and Corollary~\ref{cortildegamma}, both~$X$ and~$\widetilde{X}$ are continuous in their respective domains. Observe that~$(\xi, \delta) \in \R^2_+$ is a zero of~$X$ if and only if it is a zero of~$\widetilde{X}$. Moreover, from~\eqref{kprimneg} and Lemma~\ref{critinfinity} we infer that there exists~$R \ge 1$ such that
\begin{equation} \label{tildeXnonzero}
\widetilde{X}(\xi, \delta) \cdot (2 \xi, \delta) < 0 \quad \mbox{for all } (\xi, \delta) \in \R^2 \setminus B_R.
\end{equation}
Also, since~$\Gamma$ is harmonic, all its critical points are isolated. By these considerations and~\eqref{kmorse}, we easily deduce that~$X$ and~$\widetilde{X}$ have a finite number of zeroes in their domains. In particular, taking $R$ larger if necessary, all the zeroes of both vector fields in~$\R^2_+$ are contained in~$D_+ := \left\{ (\xi, \delta) \in B_R : \delta > \frac{1}{R} \right\}$ and all the zeroes of~$\widetilde{X}$ on~$\R \times \{ 0 \}$ are contained in~$D_0 := \left\{ (\xi, \delta) \in B_R : |\delta| < \frac{1}{R} \right\}$---also note that, by~\eqref{kmorse},~$X$ has no zeroes on~$\R \times \{ 0 \}$.

We claim that~$X$ and~$\widetilde{X}$ are homotopically equivalent in~$D_+$, in the sense of~\cite[Page~28]{AM07}. Indeed, let
$$
h \big( {t, (\xi, \delta)} \big) := \left( \partial_\xi \Gamma(\xi, \delta^2), (1 + (2 \delta - 1) t) \partial_{\mu} \Gamma(\xi, \delta^2) \right) \quad \mbox{for } t \in [0, 1] \mbox{ and } (\xi, \delta) \in \R^2_+.
$$
Clearly,~$h$ is a continuous function satisfying~$h(0, \cdot) = X$ and~$h(1, \cdot) = \widetilde{X}$. Moreover,~$h(t, \cdot)$ does not vanish on~$\partial D_+$ for all~$t \in [0, 1]$ since~$X \ne 0$ there. From the invariance under homotopy of the Brouwer degree, we conclude that
\begin{equation} \label{degX=degXtilde}
\deg \big( {X, D_+, 0} \big) = \deg \big( {\widetilde{X}, D_+, 0} \big)
\end{equation}

Thanks to~\eqref{tildeXnonzero}, it is immediate to verify that~$\widetilde{X}$ is homotopically equivalent to the map~$A(\xi,\delta) := (- 2\xi, - \delta)$ in~$B_R$. Thus, its Brouwer degree is
$$
\deg \big( {\widetilde{X}, B_R, 0} \big) = \deg \! \big( {A, B_R, 0} \big) = 1.
$$
By this,~\eqref{degX=degXtilde}, and considerations made earlier, setting~$D_- := \left\{ (\xi, \delta) \in B_R : \delta < - \frac{1}{R} \right\}$ we infer from the decomposition property of the Brouwer degree that
\begin{equation} \label{1=d1+d2}
\begin{aligned}
1 & = \deg \big( {\widetilde{X}, B_R, 0} \big) = \deg \big( {\widetilde{X}, D_+, 0} \big) + \deg \big( {\widetilde{X}, D_-, 0} \big) + \deg \big( {\widetilde{X}, D_0, 0} \big) \\
& = 2 \deg \big( {X, D_+, 0} \big) + \deg \big( {\widetilde{X}, D_0, 0} \big).
\end{aligned}
\end{equation}

Let now~$M_\kappa^+$ and~$m_\kappa^+$ be as in the statement of the theorem and define
\begin{align*}
M^-_\kappa & := \#\left\{\mbox{local maximum points } x \mbox{ for }\kappa\mbox{ satisfying }(-\Delta)^\frac12\kappa(x)<0\right\},\\
m^-_\kappa & := \#\left\{\mbox{local minimum points } x \mbox{ for }\kappa\mbox{ satisfying }(-\Delta)^\frac12\kappa(x)<0 \right\}.
\end{align*}
By putting together Corollary~\ref{cortildegamma}, assumption~\eqref{kmorse}, and Remark~\ref{remarkTopologicalDegree}, we obtain that
$$
\deg \big( {\widetilde{X}, D_0, 0} \big) = \sum_{\xi \in (- R, R) \, : \, \kappa'(\xi) = 0} \mbox{ind} \big( {\widetilde{X}, (\xi, 0)} \big) = M_\kappa^+ - M_\kappa^- + m_\kappa^- - m_\kappa^+.
$$
Furthermore, in view of~\eqref{kprimneg}, the number of local maxima of~$\kappa$ exceeds exactly by one the number of its local minima. Hence,~$M_\kappa^+ + M_\kappa^- = m_\kappa^- + m_\kappa^+ + 1$ and the previous identity becomes
$$
\deg \big( {\widetilde{X}, D_0, 0} \big) = 2 ( M_\kappa^+ - m_\kappa^+ ) - 1.
$$
Claim~\eqref{degformula} immediately follows by combining this with~\eqref{1=d1+d2}.
\end{proof}

We can deduce some more information on the critical points of $\Gamma$ due to the fact that~$\Gamma$ is a bi-dimensional harmonic function. In particular, we deduce an exact multiplicity result provided all critical points are non-degenerate. This is a new feature for our problem with respect to previous results~\cite{AAP99,GP05}.

\begin{proposition}\label{propexactnum}
If the assumptions of Theorem~\ref{thmdeg} are satisfied and all the critical points of~$\Gamma$ are non-degenerate, then~$\Gamma$ has exactly~$M^+_\kappa-m^+_\kappa-1$ critical points. 
\end{proposition}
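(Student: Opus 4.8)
The plan is to combine the degree formula \eqref{degformula} of Theorem~\ref{thmdeg} with the elementary but crucial observation that, being harmonic, $\Gamma$ can only have saddle-type critical points. Concretely, I would first recall from Theorem~\ref{thmdeg} that there exists~$R > 0$ such that~$\Gamma$ has no critical point in~$\R^2_+ \setminus B_R$, that~$\nabla\Gamma$ does not vanish on~$\partial(B_R \cap \R^2_+)$ (using \eqref{kmorse} to exclude zeros on~$\R \times \{0\}$), and that~$\deg \big( {\nabla\Gamma, B_R \cap \R^2_+, 0} \big) = 1 - M^+_\kappa + m^+_\kappa$. Since~$\Gamma$ is harmonic in~$\R^2_+$, all its critical points are isolated; together with the non-degeneracy assumption and the localization in~$B_R \cap \R^2_+$, this shows that~$\Gamma$ has only finitely many critical points, say~$p_1, \dots, p_k \in B_R \cap \R^2_+$. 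Choosing pairwise disjoint balls~$B_{r}(p_j) \subset B_R \cap \R^2_+$ and using the additivity and excision properties of the Brouwer degree (see~\cite[Chapter~3]{AM07}), one gets
\begin{equation*}
1 - M^+_\kappa + m^+_\kappa = \deg \big( {\nabla\Gamma, B_R \cap \R^2_+, 0} \big) = \sum_{j = 1}^k \mbox{ind} \big( {\nabla\Gamma, p_j} \big).
\end{equation*}

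Next I would compute each index. At a critical point~$p_j = (\xi_j, \mu_j) \in \R^2_+$, the harmonicity of~$\Gamma$ gives~$\partial_{\xi\xi}\Gamma(p_j) = - \partial_{\mu\mu}\Gamma(p_j)$, and therefore
\begin{equation*}
\det D^2\Gamma(p_j) = \partial_{\xi\xi}\Gamma(p_j) \, \partial_{\mu\mu}\Gamma(p_j) - \big( {\partial_{\xi\mu}\Gamma(p_j)} \big)^2 = - \big( {\partial_{\xi\xi}\Gamma(p_j)} \big)^2 - \big( {\partial_{\xi\mu}\Gamma(p_j)} \big)^2 \le 0.
\end{equation*}
The non-degeneracy hypothesis forces the strict inequality~$\det D^2\Gamma(p_j) < 0$, so each~$p_j$ is a non-degenerate saddle point; by Remark~\ref{remarkTopologicalDegree} we conclude that~$\mbox{ind} \big( {\nabla\Gamma, p_j} \big) = -1$ for every~$j$. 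Substituting into the identity above yields~$1 - M^+_\kappa + m^+_\kappa = -k$, that is~$k = M^+_\kappa - m^+_\kappa - 1$, which is exactly the claimed count.

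There is no serious obstacle here: the statement follows almost formally from Theorem~\ref{thmdeg} once one notices that a harmonic function has strictly negative Hessian determinant at every non-degenerate critical point, so that all such points carry index~$-1$. The only points requiring a line of care are that the finitely many critical points all lie in the \emph{open} set~$B_R \cap \R^2_+$ on which the degree is computed and that~$\nabla\Gamma$ is nonzero on its boundary — both of which are already part of the conclusion of Theorem~\ref{thmdeg}.
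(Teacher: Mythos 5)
Your argument is correct and follows the same route as the paper: invoke the degree formula of Theorem~\ref{thmdeg}, decompose the degree as a sum of indices at the (finitely many, isolated) critical points, and observe that harmonicity forces each non-degenerate critical point to be a saddle with index~$-1$. The only difference is that you spell out the Hessian computation~$\det D^2\Gamma = -(\partial_{\xi\xi}\Gamma)^2 - (\partial_{\xi\mu}\Gamma)^2 < 0$ explicitly, whereas the paper simply states the saddle-point fact; the substance is identical.
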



\begin{proof}
Thanks to Theorem~\ref{thmdeg}, there exists~$R > 0$ for which
$$
C:= \Big\{ {(\xi,\mu) \in \overline{\R_+^2} : \nabla \Gamma(\xi,\mu) = 0} \Big\} = \Big\{ {(\xi,\mu) \in B_R \cap \R_+^2: \nabla \Gamma(\xi,\mu) = 0} \Big\}.
$$
As the critical points of~$\Gamma$ are isolated---$\Gamma$ being harmonic---, we have that
\begin{equation} \label{conclussionProp48}
\deg \big( {\nabla \Gamma,B_R \cap \R_+^2,0} \big) = \sum_{(\xi,\mu) \in C} {\rm{ind}} \big( {\nabla \Gamma, (\xi,\mu)} \big).
\end{equation}
Now, since~$\Gamma$ is harmonic and all its critical points are non-degenerate, it follows that they can only be of saddle type, hence their index is always~$-1$. Thus, the result follows combining~\eqref{degformula} and~\eqref{conclussionProp48}.
\end{proof}

\begin{corollary} \label{corexactnum}
If the assumptions of Theorem \ref{thmdeg} are satisfied and all the critical points of $\Gamma$ are non-degenerate, then \eqref{maineq} has at least $M^+_\kappa-m^+_\kappa-1$ solutions of the form \eqref{u=U+phi}.
\end{corollary}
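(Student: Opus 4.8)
The plan is to combine the exact count of critical points furnished by Proposition~\ref{propexactnum} with the existence statement of Theorem~\ref{mainthm}, the only genuinely substantive point being to verify that distinct critical points of~$\Gamma$ produce distinct solutions. Set $N := M^+_\kappa - m^+_\kappa - 1$. If $N \le 0$ there is nothing to prove, so assume $N \ge 1$. Since the assumptions of Theorem~\ref{thmdeg} hold and all critical points of~$\Gamma$ are non-degenerate, Proposition~\ref{propexactnum} guarantees that~$\Gamma$ has exactly $N$ critical points $(\xi_\star^{(1)}, \mu_\star^{(1)}), \dots, (\xi_\star^{(N)}, \mu_\star^{(N)}) \in \R^2_+$, pairwise distinct and each non-degenerate.

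Next I would apply Theorem~\ref{mainthm} to each critical point $(\xi_\star^{(k)}, \mu_\star^{(k)})$ separately. For every $k \in \{ 1, \dots, N \}$ this yields a threshold $\varepsilon_0^{(k)} \in (0,1)$, a constant $C_k > 0$, and, for each $\varepsilon \in (0, \varepsilon_0^{(k)}]$, a solution $u_\varepsilon^{(k)} = \mathcal{U}_{\mu_\varepsilon^{(k)}, \xi_\varepsilon^{(k)}} + \phi_\varepsilon^{(k)}$ of~\eqref{maineq} of the form~\eqref{u=U+phi}, with $(\xi_\varepsilon^{(k)}, \mu_\varepsilon^{(k)}) \to (\xi_\star^{(k)}, \mu_\star^{(k)})$ as $\varepsilon \to 0^+$ and $\| \phi_\varepsilon^{(k)} \|_{L^\infty(\R)} \le C_k \varepsilon$. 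As there are only finitely many critical points, I may set $\varepsilon_1 := \min_{1 \le k \le N} \varepsilon_0^{(k)} > 0$, so that all $N$ solutions are simultaneously available for every $\varepsilon \in (0, \varepsilon_1]$.

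It then remains to check that, after possibly shrinking $\varepsilon_1$, the functions $u_\varepsilon^{(1)}, \dots, u_\varepsilon^{(N)}$ are pairwise distinct; this is the one \emph{delicate} step. Fix $k \ne \ell$ and suppose $u_\varepsilon^{(k)} = u_\varepsilon^{(\ell)}$ for some admissible $\varepsilon$. Then $\mathcal{U}_{\mu_\varepsilon^{(k)}, \xi_\varepsilon^{(k)}} - \mathcal{U}_{\mu_\varepsilon^{(\ell)}, \xi_\varepsilon^{(\ell)}} = \phi_\varepsilon^{(\ell)} - \phi_\varepsilon^{(k)}$, whose $L^\infty(\R)$ norm is at most $(C_k + C_\ell)\varepsilon$. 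On the other hand, by the explicit formula for $\mathcal{U}_{\mu, \xi}$ and the convergence of the parameters, the left-hand side converges, uniformly on compact subsets of~$\R$, to $\mathcal{U}_{\mu_\star^{(k)}, \xi_\star^{(k)}} - \mathcal{U}_{\mu_\star^{(\ell)}, \xi_\star^{(\ell)}}$. This limit is not identically zero: the map $(\mu, \xi) \mapsto \mathcal{U}_{\mu, \xi}$ is injective, since the maximum point of $\mathcal{U}_{\mu, \xi}$ is $\xi$ and its maximum value $\log(2/\mu)$ determines $\mu$, and $(\xi_\star^{(k)}, \mu_\star^{(k)}) \ne (\xi_\star^{(\ell)}, \mu_\star^{(\ell)})$. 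Evaluating at a point where the limit function is nonzero gives a contradiction once $\varepsilon$ is small enough. Carrying out this argument over the finitely many pairs $(k, \ell)$ and taking $\varepsilon_1$ correspondingly small produces $N$ distinct solutions of~\eqref{maineq} of the form~\eqref{u=U+phi} for all $\varepsilon \in (0, \varepsilon_1]$, which is the asserted conclusion. Everything besides this distinctness verification is a direct appeal to Proposition~\ref{propexactnum} and Theorem~\ref{mainthm}, so the distinctness step is essentially the sole obstacle.
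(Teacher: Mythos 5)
Your proposal is correct and follows essentially the same route as the paper, which proves the corollary in one line by citing Proposition~\ref{propexactnum} together with Theorem~\ref{mainthm}. The one genuine contribution of your write-up is the distinctness verification at the end: the paper leaves implicit that solutions constructed from distinct non-degenerate critical points of~$\Gamma$ really are pairwise distinct, and your argument---comparing $\mathcal{U}_{\mu_\varepsilon^{(k)},\xi_\varepsilon^{(k)}} - \mathcal{U}_{\mu_\varepsilon^{(\ell)},\xi_\varepsilon^{(\ell)}}$, which stays bounded away from zero at some point by injectivity of $(\mu,\xi)\mapsto\mathcal{U}_{\mu,\xi}$ and convergence of the parameters, against $\phi_\varepsilon^{(\ell)} - \phi_\varepsilon^{(k)}$, whose $L^\infty$ norm is $O(\varepsilon)$---closes that gap cleanly. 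This is not a different proof strategy but a more careful execution of the same one, and it is the kind of detail worth recording.
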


\begin{proof}
The result follows from the combination of Theorem \ref{mainthm} with Proposition \ref{propexactnum}.
\end{proof}

\begin{remark}\label{max-min}
If~$\kappa$ has only~$N+1$ global maximum points and~$N$ global minimum points then~$M^+_\kappa={N+1}$ and~$m^-_\kappa=0$ because~$(-\Delta)^\frac12 \kappa(x)>0$ if~$x$ is global maximum point and~$(-\Delta)^\frac12 \kappa(x)<0$ if~$x$ is a global minimum point. If, in addition, all critical points of~$\Gamma$ are non-degenerate then their number is exactly~$N$.
\end{remark}

Thanks to this observation, we are now almost in position to establish the validity of Theorem~\ref{main2}. In order to do this, we need the following density result.

\begin{lemma} \label{denslem}
Let~$\beta \in (0, 1)$. The following statements hold true.
\begin{enumerate}[label=$(\alph*)$,leftmargin=*]
\item \label{a} The set
$$
\mathscr{A} := \Big\{ {\kappa \in C^2_{*, \beta}(\R) : \mbox{its harmonic extension~$\Gamma$ has no degenerate critical points in~$\R^2_+$}} \Big\}
$$
is dense in~$C^2_{*, \beta}(\R)$.
\item \label{b} Considering
$$
\mathscr{X} := \Big\{ {\kappa \in C^2_{*, \beta}(\R) : \mbox{$\kappa$~satisfies~\eqref{kprimneg}}} \Big\},
$$
its subset~$\mathscr{B} := \mathscr{A} \cap \mathscr{X}$ is dense in~$\mathscr{X}$ (with respect to the induced metric).
\end{enumerate}
\end{lemma}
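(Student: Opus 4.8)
The plan is to prove both parts simultaneously by a parametric transversality (Sard) argument, perturbing $\kappa$ by translates of the Poisson kernel --- whose harmonic extensions are completely explicit. Write $P_\mu(x):=\frac1\pi\frac{\mu}{\mu^2+x^2}$, so that $\Gamma(\xi,\mu)=\int_\R P_\mu(\xi-y)\kappa(y)\,dy$. For each $\beta\in(0,1)$ and $t\in\R$ one checks that $P_1(\cdot-t)\in C^2_{*,\beta}(\R)$ (its first derivative decays like $|x|^{-3}$, faster than the admissible rate $|x|^{-(2+\beta)}$), and by the semigroup identity $P_\mu*P_1=P_{\mu+1}$ the harmonic extension of $P_1(\cdot-t)$ to $\R^2_+$ is $(\xi,\mu)\mapsto P_{1+\mu}(\xi-t)$. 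I would fix three distinct points $t_1,t_2,t_3$ and, given a reference profile $\kappa_1\in C^2_{*,\beta}(\R)$ with harmonic extension $\Gamma_1$, consider for small $\rho>0$ the smooth map $F\colon B_\rho(0)\times\R^2_+\to\R^2$ defined by $F(a,\xi,\mu):=\nabla_{(\xi,\mu)}\big(\Gamma_1(\xi,\mu)+\sum_{k=1}^3 a_k P_{1+\mu}(\xi-t_k)\big)$.

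The first key step is a transversality computation. Writing $r_k:=\xi-t_k$ and $s:=1+\mu>0$, one finds $\nabla_{(\xi,\mu)}P_{1+\mu}(\xi-t_k)=\frac{1}{\pi(s^2+r_k^2)^2}(-2r_ks,\,r_k^2-s^2)$; this vector never vanishes (as $s>0$), and two of them, for indices $i\ne j$, are parallel exactly when $r_ir_j=-s^2$, which --- since $t_i\ne t_j$ --- cannot hold for two distinct pairs at the same $(\xi,\mu)$. Hence the three vectors are never all parallel, so $\partial_aF$ has rank $2$ at every point and $0$ is a regular value of $F$. Applying the parametric version of Sard's theorem to the projection $F^{-1}(0)\to B_\rho(0)$ (two manifolds of dimension $3$), I conclude that for a.e. $a\in B_\rho(0)$ the point $0$ is a regular value of $F(a,\cdot)$, i.e. the harmonic extension of $\kappa_1+\sum_k a_k P_1(\cdot-t_k)$ has only non-degenerate critical points in $\R^2_+$. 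Taking $\kappa_1=\kappa_0$ and choosing $a\ne0$ arbitrarily small, this already proves part \ref{a}.

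For part \ref{b} the remaining issue is that \eqref{kprimneg} need not survive an arbitrarily small $C^2_{*,\beta}$ perturbation, since $\kappa_0'$ may decay faster than $|x|^{-(2+\beta)}$ in the tails. I would fix this by first regularizing the tail: let $\chi(x):=(1+x^2)^{-(1+\beta)/2}$, which lies in $C^2_{*,\beta}(\R)$, satisfies \eqref{kprimneg}, and obeys $x\chi'(x)\le -c_\beta|x|^{-(1+\beta)}$ for $|x|\ge 1$ with $c_\beta>0$. Given $\kappa_0\in\mathscr{X}$ and $\eta>0$, set $\kappa_1:=\kappa_0+\delta\chi$ with $\delta>0$ small enough that $\|\kappa_1-\kappa_0\|_{C^2_{*,\beta}(\R)}<\eta/2$; then $\kappa_1\in\mathscr{X}$ and moreover $x\kappa_1'(x)\le -\delta c_\beta|x|^{-(1+\beta)}$ for large $|x|$. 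Now run the Sard argument above with this $\kappa_1$, restricting $\rho$ so that every perturbation $\psi=\sum_k a_k P_1(\cdot-t_k)$ with $|a|\le\rho$ satisfies $\|\psi\|_{C^2_{*,\beta}(\R)}<\min\{\eta/2,\delta c_\beta\}$. Since $|x\psi'(x)|\le\|\psi\|_{C^2_{*,\beta}(\R)}|x|^{-(1+\beta)}$, the bound on $x\kappa_1'$ forces $x(\kappa_1+\psi)'(x)<0$ for $|x|$ large, so $\kappa_1+\psi\in\mathscr{X}$; combined with the Sard conclusion, $\kappa_1+\psi\in\mathscr{A}\cap\mathscr{X}=\mathscr{B}$ for a.e. such $a$, and $\|\kappa_1+\psi-\kappa_0\|_{C^2_{*,\beta}(\R)}<\eta$, which is the asserted density.

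The main obstacle, as I see it, is precisely the interplay in part \ref{b}: one must pick a perturbation family whose harmonic extensions span all directions at every point of $\R^2_+$, so that the rank-$2$ transversality condition holds globally, while simultaneously keeping the sign condition \eqref{kprimneg} intact at infinity --- the Poisson-kernel perturbations handle the first requirement, but they must be combined with the preliminary tail regularization by $\chi$ to satisfy the second. The remaining verifications (smoothness of $F$, the membership statements $P_1(\cdot-t),\chi\in C^2_{*,\beta}(\R)$, linearity of the harmonic extension, and the bookkeeping of constants) are routine.
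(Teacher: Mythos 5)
Your proposal is correct, and it takes a genuinely different route from both of the paper's arguments. The paper's main proof transfers the problem to the disc via a conformal map, applies the Federer--Morse--Sard theorem to the set of rank-zero singular points of $\nabla\gamma$ (exploiting harmonicity: a degenerate Hessian of a harmonic function in two variables is necessarily zero), and then perturbs by a single carefully chosen affine function $\varepsilon_j(p-e_2)\cdot e_2$, which pulls back to the explicit perturbation $\kappa_j = \kappa + \frac{2\varepsilon_j}{1+\xi^2}$ on the real line; this one-parameter perturbation automatically satisfies $\xi\,\partial_\xi\bigl(\frac{2\varepsilon_j}{1+\xi^2}\bigr) < 0$, so part~\ref{b} follows with no extra work. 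The appendix instead invokes the abstract Saut--Temam transversality theorem with the full space $C^2_{*,\beta}(\R)$ as parameter space, checking that $\partial_\kappa F$ is onto by exhibiting two perturbation directions built from $\frac{1}{1+y^6}$ and $\frac{y}{1+y^6}$; that proof covers only part~\ref{a}. Your argument replaces the infinite-dimensional (or conformally transported) parameter space with a concrete finite-dimensional family of three Poisson-kernel translates, whose harmonic extensions are closed-form, and verifies by hand that their $(\xi,\mu)$-gradients span $\R^2$ at every point of the half-plane (the computation $r_ir_j = -s^2$ cannot hold for two distinct pairs). This reduces the genericity to an elementary parametric Sard/transversality argument, with no conformal geometry and no appeal to abstract Fredholm theory. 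The price is paid in part~\ref{b}: your perturbations carry arbitrary signs $a_k$, so condition~\eqref{kprimneg} is not automatic, and the preliminary tail regularization by $\delta\chi$ with $\chi(x) = (1+x^2)^{-(1+\beta)/2}$ is genuinely needed to make the sign condition quantitative at infinity before perturbing. That step is correct (your estimates $x\chi'(x)\le -c_\beta|x|^{-(1+\beta)}$ and $|x\psi'(x)|\le\|\psi\|_{C^2_{*,\beta}}|x|^{-(1+\beta)}$ both check out), and it is the main structural difference from the paper, which sidesteps the issue entirely by picking a perturbation with a built-in sign. On balance your route is more elementary and self-contained, at the cost of a slightly longer treatment of part~\ref{b}; the paper's choice of perturbation is slicker but less transparent about where the harmonicity and the conformal map are being used.
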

\begin{proof}
Let~$\kappa \in C^2_{*, \beta}(\R)$ and~$\Gamma \in C^0 \big( {\overline{\R^2_+}} \big) \cap L^\infty (\R^2_+) \cap C^\infty(\R^2_+)$ be its harmonic extension to~$\R^2_+$. Let~$\D := \big\{ {(x, y) \in \R^2 : x^2 + y^2 < 1} \big\}$ and~$\Phi: \overline{\D} \setminus \{ e_2 \} \to \overline{\R^2_+}$ be the conformal mapping
$$
\Phi(x, y) := \left( \frac{2x}{x^2 + (y- 1)^2}, \frac{1 - x^2 - y^2}{x^2 + (y - 1)^2} \right) \quad \mbox{for } (x, y) \in \overline{\D} \setminus \{ e_2 \},
$$
where~$e_2 = (0, 1)$ denotes the second element of the standard basis of~$\R^2$. Let~$\gamma := \Gamma \circ \Phi$. We have that~$\gamma$ lies in~$C^0(\overline{\D} \setminus \{ e_2 \}) \cap C^\infty(\D)$ and is harmonic in~$\D$.

Let~$V := \nabla \gamma$ and consider the set of its singular points
$$
\mathscr{S} := \Big\{ {p \in \D : \det JV(p) = 0} \Big\}.
$$
Observe that,~$\gamma$ being harmonic,~$\mathscr{S}$ coincides with the set of points at which the Jacobian matrix of~$V$ has rank equal to zero. In symbols,
$$
\mathscr{S} = \Big\{ {p \in \D : \mbox{rank } JV(p) = 0} \Big\}
$$
As~$V \in C^\infty(\D; \R^2)$, a suitable version of the Federer-Morse-Sard theorem (see,~e.g.,~\cite{M01}) yields that
$$
\mathcal{H}^1 \big( {V(\mathscr{S})} \big) = 0,
$$
where~$\mathcal{H}^1$ denotes the~$1$-dimensional Hausdorff measure in~$\R^2$. As a result, there exists an infinitesimal sequence~$\{ \varepsilon_j \} \subset (0, 1)$ such that
$$
\varepsilon_j e_2 \in \R^2 \setminus V(\mathscr{S}) \quad \mbox{for all } j \in \N.
$$
Hence,
$$
\det JV(p) \ne 0 \quad \mbox{for all } p \in D \mbox{ s.t.~} V(p) = \varepsilon_j e_2 \mbox{ and all } j \in \N.
$$

Set now
$$
\gamma_j(p) := \gamma(p) - \varepsilon_j (p - e_2) \cdot e_2  \quad \mbox{for all } p \in \overline{\D} \setminus \{ e_2 \}.
$$
Clearly,~$\gamma_j$ is harmonic in~$\D$. Moreover,
$$
\nabla \gamma_j = \nabla \gamma - \varepsilon_j e_2 = V - \varepsilon_j e_2 \qquad \mbox{and} \qquad
D^2 \gamma_j = D^2 \gamma = JV,
$$
so that~$\det D^2 \gamma_j \ne 0$ at all critical points of~$\gamma_j$ in~$\D$. That is,~$\gamma_j$ has no degenerate critical points in~$\D$.

Let now~$\Gamma_j := \gamma_j \circ \Psi$, with~$\Psi(\xi, \mu) := \left( \frac{2 \xi}{\xi^2 + (\mu + 1)^2}, \frac{\xi^2 + \mu^2 - 1}{\xi^2 + (\mu + 1)^2} \right)$ being the inverse function of~$\Phi$. We know that~$\Gamma_j \in C^0 \big( {\overline{\R^2_+}} \big) \cap L^\infty (\R^2_+) \cap C^\infty(\R^2_+)$ is harmonic   and has no degenerate critical points in~$\R^2_+$. In particular, it is the harmonic extension of its trace~$\kappa_j(\xi) := \Gamma_j(\xi, 0)$. We compute
$$
\kappa_j(\xi) = (\Gamma \circ \Phi \circ \Psi)(\xi, 0) - \varepsilon_j \left( \left( \frac{2 \xi}{1 + \xi^2}, \frac{\xi^2 - 1}{1 + \xi^2} \right) - (0, 1) \right) \cdot (0, 1) = \kappa(\xi) + \frac{2 \varepsilon_j}{1 + \xi^2}.
$$
From this we immediately see that~$\kappa_j \rightarrow \kappa$ in~$C^2_{*, \beta}(\R)$ as~$j \rightarrow +\infty$ and point~\ref{a} is verified. Furthermore, if~$\kappa$ satisfies~\eqref{kprimneg}, then there exists~$R_0 > 0$ such that~$\xi \kappa'(\xi) < 0$ for all~$\xi \in \R \setminus [- R_0, R_0]$. But then,
$$
\xi \kappa_j'(\xi) = \xi \kappa'(\xi) - \frac{4 \varepsilon_j \xi^2}{(1 + \xi^2)^2} < 0 \quad \mbox{for all } \xi \in \R \setminus [- R_0, R_0],
$$
that is,~$\kappa_j$ satisfies~\eqref{kprimneg} for all~$j \in \N$. This establishes the validity of point~\ref{b}.
\end{proof}

We refer to Appendix~\ref{tranapp} for a different proof of Lemma~\ref{denslem} \ref{a}. This alternative approach relies on the generalization of an abstract transversality result due to~Saut~\&~Temam~\cite{ST79} and is closer in spirit to arguments used in, say,~\cite{MP13,BHJY22} to establish genericity results. Having at hand Lemma~\ref{denslem}, we are now ready to present the

\begin{proof}[Proof of Theorem~\ref{main2}]
First of all, we point out that~$(-\Delta)^\frac12\kappa_0>0$ at any global maximum point and~$(-\Delta)^\frac12\kappa_0<0$ at any global minimum point. Since these extrema are all non-degenerate and~$\kappa_0$ has no other stationary point, we see that it satisfies~\eqref{kmorse}.

Let~$\Gamma_0$ be the harmonic extension of~$\kappa_0$ to~$\R^2_+$. If~$\Gamma_0$ has no degenerate critical points in~$\R^2_+$, then we may directly apply Proposition~\ref{propexactnum} (also recall Remark~\ref{max-min}) and deduce from Theorem~\ref{mainthm} that equation~\eqref{maineq}, for~$\kappa = \kappa_0$ and~$\varepsilon$ sufficiently small, has at least~$N$ solutions of the form~\eqref{u=U+phi}.

Suppose on the other hand that~$\Gamma_0$ has degenerate critical points. Thanks to point~\ref{b} of Lemma~\ref{denslem}, there exists a sequence~$\{ \kappa_j \} \subset C^2_{*, \beta}(\R)$ converging to~$\kappa_0$ in~$C^2_{*, \beta}(\R)$ and such that~$\kappa_j$ satisfies~\eqref{kprimneg} and its harmonic extension~$\Gamma_j$ has no degenerate critical points in~$\R^2_+$, for each~$j \in \N$. From this we also infer that, if~$j$ is sufficiently large,~$\kappa_j$ also has exactly~$N + 1$ local maxima,~$N$ local minima (all non-degenerate) and no other critical point. Moreover, its half-Laplacian is strictly positive at the maxima and strictly negative at the minima. In particular,~$\kappa_j$ satisfies~\eqref{kmorse}. It is immediate to check that it satisfies~\eqref{kintpos} as well, provided~$j$ is large enough. The conclusion then follows by applying Proposition~\ref{propexactnum} in conjunction with Theorem~\ref{mainthm}.
\end{proof}

\begin{proof}[Proof of Corollary \ref{corMepsilon2}]
The result follows arguing as in the proof of Corollary~\ref{corCMepsilon} using now Theorem~\ref{main2} instead of Theorem~\ref{mainthm}.
\end{proof}

\section{Examples and remarks} \label{section5}

\noindent In this section we provide some examples of functions $\kappa$ for which problem \eqref{maineq} has or does not have solutions. We will also make some comments and remarks, mostly comparing the results from the present paper with previous ones in the literature.

\begin{example}
In order to produce bounded harmonic functions in~$\R^2_+$ having non-degenerate critical points it is useful to consider the standard conformal mapping
$$
\Psi: \R^2_+ \to \mathbb{D} := \big\{ {(x, y) \in \R^2 : x^2 + y^2 < 1} \big\}
$$
defined by setting
\begin{equation}\label{psi}
\Psi(\xi, \mu) := \left( \frac{2 \xi}{\xi^2 + (\mu + 1)^2}, \frac{\xi^2 + \mu^2 - 1}{\xi^2 + (\mu + 1)^2} \right) \quad \mbox{for } (\xi, \mu) \in \R^2_+.
\end{equation}
Then, a function~$\Gamma$ is harmonic in~$\R^2_+$ if and only if~$\Gamma \circ \Psi^{-1}$ is harmonic in~$\mathbb{D}$ and~$(\xi_\star, \mu_\star) \in \R^2_+$ is a non-degenerate critical point for~$\Gamma$ if and only if~$\Psi^{-1}(\xi_\star, \mu_\star)$ is a non-degenerate critical point for~$\Gamma \circ \Psi^{-1}$. Hence, to obtain bounded harmonic functions in the unbounded set~$\R^2_+$ it suffices to find harmonic functions in bounded sets containing~$\mathbb{D}$. 
A class of functions that can be easily handled is that of harmonic polynomials. Following are two examples of harmonic polynomials generating admissible~$\kappa$'s.

\begin{enumerate}[leftmargin=*,label=$(\alph*)$]
\item First, we consider the function~$\gamma^{(1)}(x, y) = x^2 - y^2 + 1$. Notice that~$\gamma^{(1)}$ is harmonic and has a unique critical point at the origin, which is non-degenerate. Consequently, the function
$$
\Gamma^{(1)}(\xi, \mu) := \gamma^{(1)}(\Psi(\xi, \mu)) = 4 \, \frac{2 \xi^2 + \mu (\xi^2 + (\mu + 1)^2)}{(\xi^2 + (\mu + 1)^2)^2}
$$
is harmonic in~$\R^2_+$ and has a unique non-degenerate critical point at~$(0, 1)$. Its trace on~$\partial \R^2_+$ is the function
$$
\kappa^{(1)}(\xi) = \Gamma^{(1)}(\xi, 0) = \frac{8 \xi^2}{(1 + \xi^2)^2}.
$$

\item Next, we construct a harmonic polynomial with (at least) two symmetric non-degenerate critical points. A straightforward computation shows that
$$
\gamma^{(2)}(x, y) = - 2 x^4 - 2 y^4 + 12 x^2 y^2 - 4 y^3 + 12 x^2 y + x^2 - y^2 - 3 y + 10
$$
is a harmonic polynomial, even with respect to~$x$, and having non-degenerate critical points at~$\left( - \frac{1}{2}, 0 \right)$ and~$\left( \frac{1}{2}, 0 \right)$. Pulling~$\gamma^{(2)}$ back via the mapping~$\Psi$ leads us to the admissible function
$$
\kappa^{(2)}(\xi) = 2 \, \frac{75 \xi^6 - 35 \xi^4 + 25 \xi^2 + 7}{(1 + \xi^2)^4},
$$
whose harmonic extension~$\Gamma^{(2)}$ has non-degenerate critical points at~$\left( -\frac{4}{5}, \frac{3}{5} \right)$ and~$\left( \frac{4}{5}, \frac{3}{5} \right)$.
\end{enumerate}

\begin{figure}[H]
\centering
\fbox{
$\ $ 
\includegraphics[width=0.45\textwidth,  trim=-20pt -30pt -20pt -30pt  ]{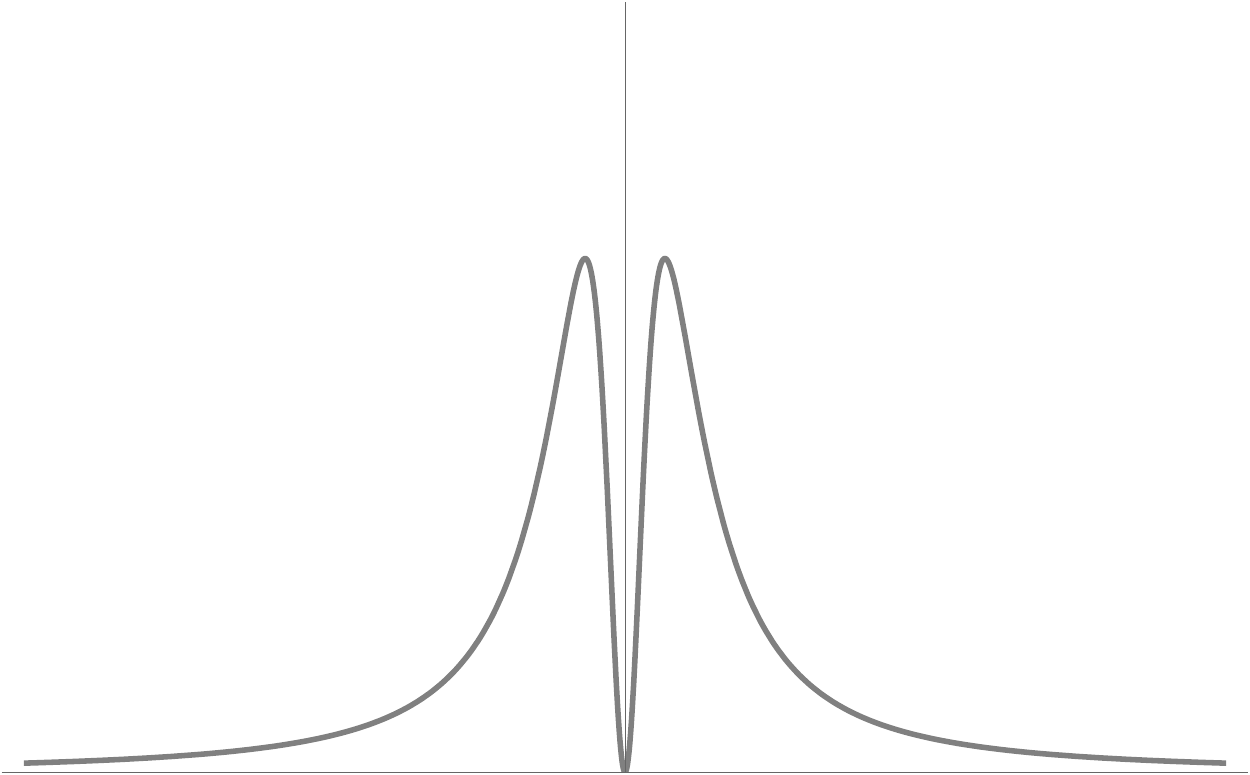} \includegraphics[width=0.45\textwidth, trim=-20pt -30pt -20pt -30pt]{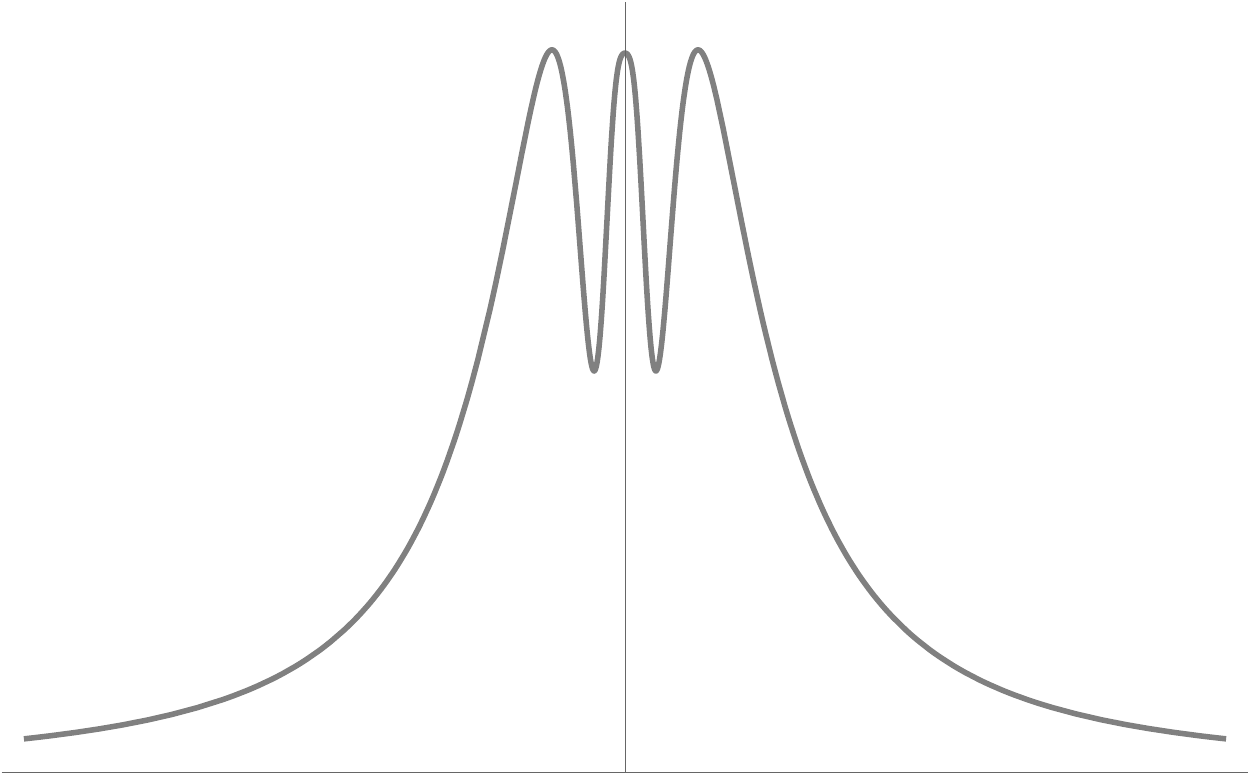} 
}
\caption{The graphs of the two admissible functions~$\kappa^{(1)}$ (on the left) and~$\kappa^{(2)}$ (on the right).}
\end{figure}
\end{example}
 
\begin{example}
On the negative direction, one can also find ``simple'' harmonic polynomials that have no critical points in $\mathbb{D}$ but have several critical points on $\partial \mathbb{D} = \mathbb{S}^1$. For instance, a straightforward computation shows that the function 
$$
\gamma^{(3)}(x,y) := \frac{1}{3} (x^3-3xy^2) - 2 x,
$$
is harmonic in $\R^2$, has no critical points in $\mathbb{D}$ but has $6$ critical points on $\mathbb{S}^1$.
\end{example}

In the next example we explore this direction in a more systematic way using some elementary complex analysis. 

\begin{example}
Let us consider, for~$N\in\N$ and~$a\ge0$, the smooth map~$k:\S^1 \to\R$ given by
$$
k \! \left( e^{it} \right) := k_{N,a} \! \left( e^{it} \right) := \frac{1-\cos\left((N+1)\left(t-\frac\pi2\right)\right)}{N+1}+a(1-\sin t),
$$
and let us define~$\kappa(\xi)=\kappa_{N,a}(\xi) := k(\Psi(\xi,0))$, where we use the same letter~$\Psi$ to indicate the continuous extension of the map in~\eqref{psi} to~$\overline{\R^2_+}$.

First of all, we claim that the function~$\kappa$ verifies~\eqref{kprimneg} and~\eqref{kintpos}. Indeed, a simple computation shows that
$$
\kappa'(\xi) = \frac{2}{1 + \xi^2} \left( \left. \frac{d}{dt} \, k(e^{i t}) \right|_{e^{i t} = \Psi(\xi, 0)} \right) = \frac{2}{1 + \xi^2} \left. \left\{ \sin \left( (N+1) \left(t-\frac\pi2\right) \right) - a \cos t \right\} \right|_{e^{i t} = \Psi(\xi, 0)},
$$
for all~$\xi \in \R$. From this we immediately conclude that~$\kappa'(\xi)>0$ for large~$\xi$ and~$\kappa'(\xi)<0$ for large~$-\xi$, that is,~\eqref{kprimneg} holds true. From the computation above one also gets a constant~$C > 0$ such that~$|\kappa'(\xi)|\le \frac C{1 + |\xi|^3}$ in~$\R$. This proves that the function~$\xi \mapsto \xi k'(\xi)$ belongs to~$L^1(\R)$. Arguing in the same way, one can easily see that~$|\kappa(\xi)| \le \frac C{1 + \xi^2}$. Hence,~$\kappa\in L^1(\R)$ and the fact that~$\kappa>0$ in~$\R$ immediately implies~\eqref{kintpos} after an integration by parts.

In order to count the critical points of~$\Gamma$ in~$\R_+^2$, it suffices to find the critical points of the harmonic extension~$\gamma$ of~$k$ in $\mathbb{D}$. Note that the harmonic extension of~$k$ to~$\mathbb{D}$ is explicitly given by
$$
\begin{aligned}
\gamma \! \left(re^{it}\right) & =\frac1{N+1}+a-\frac{r^{N+1}}{N+1}\cos\left((N+1)\left(t-\frac\pi2\right)\right)-ar\sin t \\
& =\Re\left(\frac1{N+1}+a-\frac{(-ire^{it})^{N+1}}{N+1}+aire^{it}\right).
\end{aligned}
$$
Therefore, critical points of~$\gamma$ correspond to critical points to the holomorphic map 
$$
z\mapsto\frac1{N+1}+a - \frac{(-iz)^{N+1}}{N+1} + aiz,
$$ 
namely, the critical points of~$\gamma$ are the~$N$-th roots of~$- i^N a$. We get one solution with multiplicity~$N$ if~$a=0$,~$N$ distinct solutions if~$0<a<1$ and, since we are interested in solutions in the open disk, no solutions if~$a\ge1$. Note that, when $ a > 0$, the critical points of $\gamma$ are non-degenerate.

In the case~$a=0$,~$\kappa$ has~$N+1$ maximum points at the same height and~$N$ minimum points at the same height; in particular, we get~$(-\Delta)^\frac12\kappa>0$ at each maximum and~$(-\Delta)^\frac12\kappa<0$ at each minimum, therefore~$M^+_\kappa=N+1$,~$m^+_\kappa=0$ and Theorem~\ref{thmdeg} and Corollary~\ref{corexactnum} can be applied. We just do not get actual multiplicity of solutions because they collapse at the same point.

If~$a>0$ is close to~$0$, by continuity one still gets~$N+1$ maxima with positive half-Laplacian and~$N$ minima with negative half-Laplacian, though not at the same height anymore. Therefore, Corollary~\ref{corexactnum} gives the existence of~$N$ solutions, consistently with the explicit analysis (which also show that solutions are now distinct). Computations also show that this picture holds true for any~$a\in(0,1)$. 

If~$a\ge1$, then we showed that there are no solutions, therefore Theorem~\ref{thmdeg} cannot be applied. Nonetheless,~$\kappa$ might still have a number of critical points, more than just a global maximum and a global minimum. Formula~\eqref{degformula} implies that the half-Laplacian on these points must have the ``wrong'' sign so that~$M^+_\kappa=m^+_\kappa-1$.

It is worthy to point out that~$\kappa$ could have an arbitrarily large number of critical points, although its harmonic extension has none. In particular, one can easily show that, if~$N + 2 \notin 4\N$ and~$a>1$ is close to~$1$, then~$\kappa$ has~$N+1$ maxima and~$N$ minima.


\begin{figure}[H]
\fbox{
\begin{tabular}{c}
\includegraphics[width=0.43\textwidth,  trim=-20pt 0pt -20pt -30pt ]{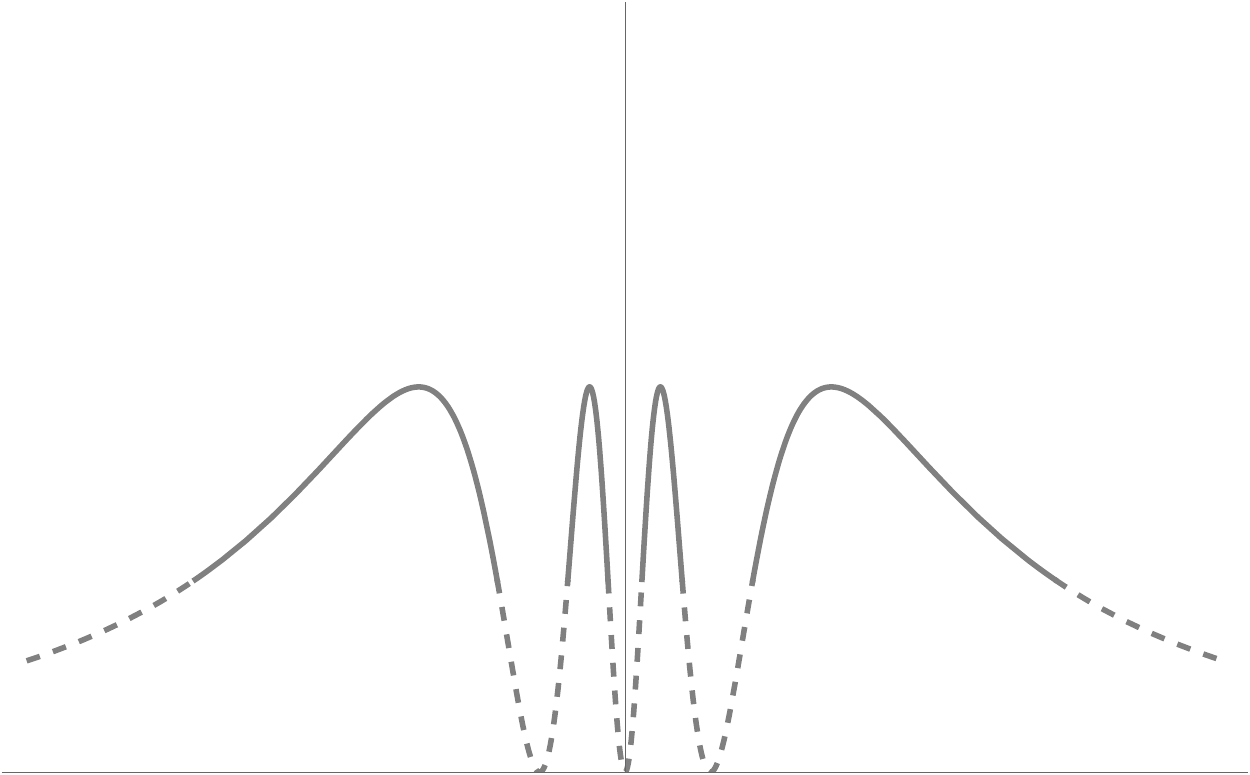}  \\
\includegraphics[width=0.43\textwidth,  trim=-20pt -30pt -20pt -20pt ]{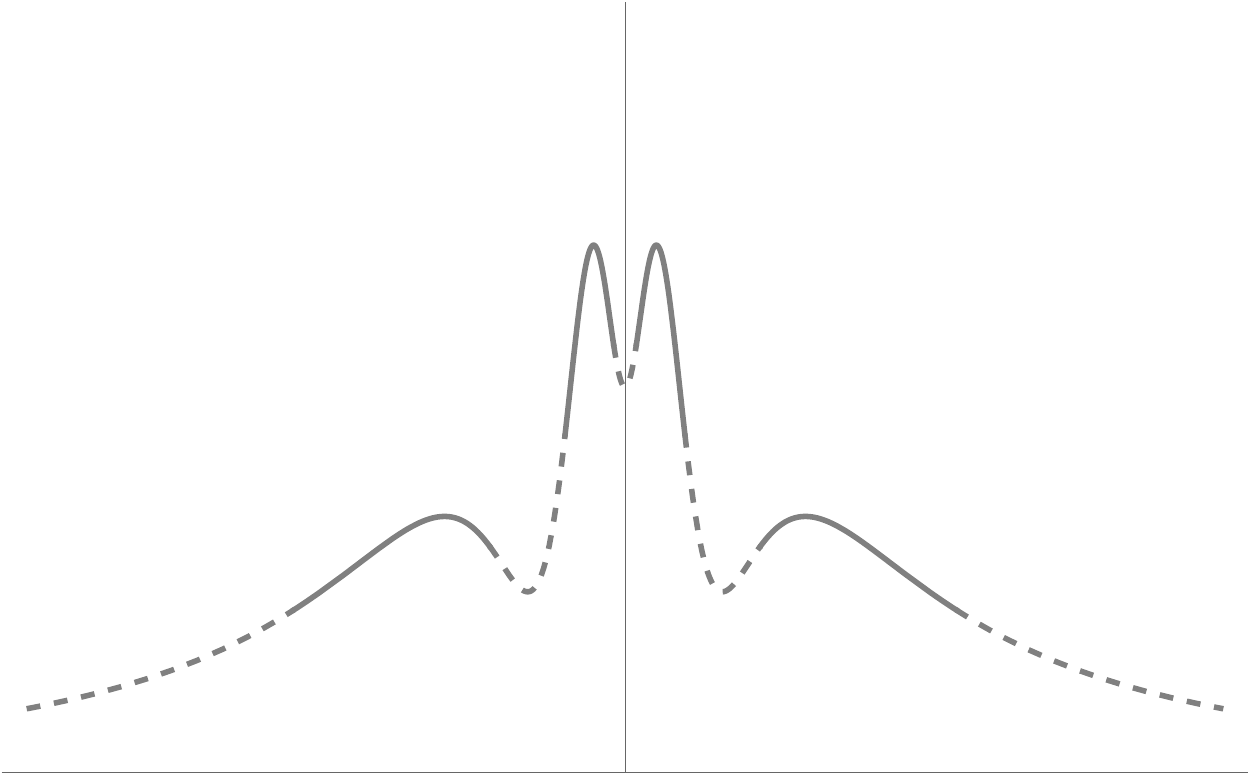} 
\includegraphics[width=0.43\textwidth,  trim=-20pt -30pt -20pt -20pt ]{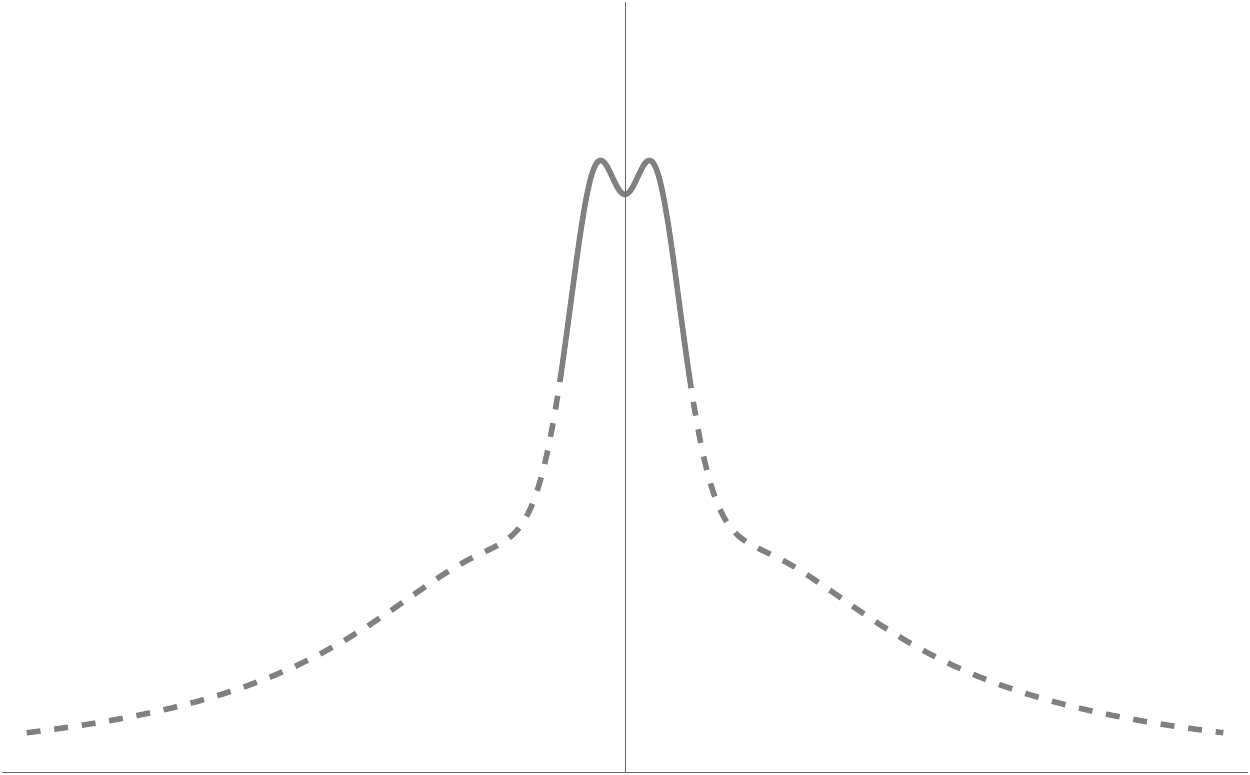} 
\end{tabular}
}

\caption{Graphs of~$\kappa_{3,a}$ with~$a=0$ (on top),~$a=\frac12$ (on the left) and~$a=\frac32$ (on the right). Regions with~$(-\Delta)^\frac12\kappa_{3,a}>0$ are plotted with a continuous line; regions with~$(-\Delta)^\frac12\kappa_{3,a}<0$ are plotted with a dashed line.}
\end{figure}

\end{example}




\bigbreak
It is interesting to compare the degree formula \eqref{degformula} we proved with some previous result on critical points of elliptic equations.

\medbreak
Theorem \ref{thmdeg} is consistent with a seminal result by Alessandrini on critical points of solutions to planar elliptic equations. When dealing with the Laplace equation set in the half-plane, Alessandrini's result reads as follows.

\begin{theorem}{\rm(\hspace{-0.003cm}\cite[Theorem 3.1]{A92})}
Assume that $\kappa \in C^2_{*,\beta}(\R)$ has $N\geq2$ global maxima, $N-1$ global minima and no other critical points. Suppose that $\lim_{x \to -\infty} \kappa(x) =  \lim_{x\to+\infty}\kappa(x)$ and that its value equals that of the global minima. Then, $\Gamma(\xi,\mu)$ has $K\ge1$ critical points, whose multiplicities $m_1,\dots,m_K$ satisfy
\begin{equation}\label{alessandrini}
\sum_{i=1}^Km_i=N-1.
\end{equation}
\end{theorem}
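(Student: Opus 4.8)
The plan is to move the problem to the unit disk via the conformal map~$\Psi$ of~\eqref{psi} and reduce it to a winding-number computation for a holomorphic function. Set~$\gamma := \Gamma \circ \Psi^{-1}$, a bounded harmonic function on~$\D$. Since~$\lim_{x\to-\infty}\kappa(x) = \lim_{x\to+\infty}\kappa(x)$ and this common value equals~$\min_\R\kappa$, the trace~$g := \kappa\circ\Psi$ extends continuously to all of~$\S^1$ and is the boundary value of~$\gamma$, with~$g(e_2) = \min_\R\kappa$ at the point~$e_2 = \Psi(\infty)$. As conformal maps preserve critical points together with their multiplicities, it suffices to count the critical points of~$\gamma$ in~$\D$. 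The hypotheses translate as follows: on~$\S^1$, the function~$g$ attains its maximum at exactly~$N$ points and its minimum at exactly~$N$ points (the~$N-1$ finite minima of~$\kappa$ together with~$e_2$); moreover, since~$\kappa$ has no stationary point besides its~$2N-1$ extrema and is strictly monotone near~$\pm\infty$, the function~$g$ is strictly monotone on each of the~$2N$ open arcs into which the extrema divide~$\S^1$.

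Next I would pass to a holomorphic reformulation. Since~$\D$ is simply connected, write~$\gamma = \Re F$ with~$F$ holomorphic on~$\D$ and set~$f := F' = \partial_x\gamma - i\,\partial_y\gamma$. A point of~$\D$ is a critical point of~$\gamma$ if and only if it is a zero of~$f$, and the order of vanishing of~$f$ equals the multiplicity of the critical point: if~$f$ vanishes to order~$m$ at~$z_0$, then~$\gamma(z) - \gamma(z_0) = \Re\big(c(z-z_0)^{m+1}\big) + o(|z - z_0|^{m+1})$ with~$c \ne 0$. For~$\rho \in (0,1)$ having no zero of~$f$ on~$\{|z| = \rho\}$, the argument principle says that the number of zeros of~$f$ in~$\{|z| < \rho\}$, counted with multiplicity, equals~$\frac{1}{2\pi}$ times the total variation of~$\arg f$ along that circle. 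Using the Cauchy--Riemann relations in polar coordinates one computes, on~$\{|z| = \rho\}$,
$$
F'(\rho e^{it}) = \frac{e^{-it}}{\rho}\Big( \rho\,\partial_\rho\gamma(\rho e^{it}) - i\,\partial_t\gamma(\rho e^{it}) \Big),
$$
so that, denoting by~$W(\rho)$ the winding number around the origin of the closed curve~$t \mapsto w_\rho(t) := \rho\,\partial_\rho\gamma(\rho e^{it}) - i\,\partial_t\gamma(\rho e^{it})$ (the curve of ``normal derivative, minus tangential derivative'' of~$\gamma$ along the circle), the number of critical points of~$\gamma$ inside~$\{|z| < \rho\}$ equals~$W(\rho) - 1$.

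It remains to show~$W(\rho) = N$ for~$\rho$ close to~$1$. Letting~$\rho \to 1$, on every compact arc of~$\S^1 \setminus \{e_2\}$ one has~$\partial_t\gamma(\rho e^{it}) \to g'(t)$ and~$\rho\,\partial_\rho\gamma(\rho e^{it}) \to \partial_\nu\gamma$, the outward normal derivative of~$\gamma$, continuous there, so~$w_\rho$ converges along those arcs to~$w(t) = \big(\partial_\nu\gamma,\, -g'(t)\big)$. Now~$g'$ vanishes exactly at the~$2N$ extrema of~$g$, changing sign from~$+$ to~$-$ at the maxima and from~$-$ to~$+$ at the minima. At each maximum point~$\zeta$ of~$g$ we have~$\gamma(\zeta) = \max_{\overline\D}\gamma$ with~$\gamma < \gamma(\zeta)$ in~$\D$, so the boundary point lemma (equivalently, back in the half-plane, the positivity of~$(-\Delta)^{\frac12}\kappa$ at a global maximum, as in Remark~\ref{max-min}) gives~$\partial_\nu\gamma(\zeta) > 0$; symmetrically,~$\partial_\nu\gamma < 0$ at each minimum point. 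Hence, as~$t$ traverses~$\S^1$, the curve~$w$ meets the real axis exactly~$2N$ times: at the~$N$ maxima it crosses the positive real axis with~$\Im w$ increasing, at the~$N$ minima it crosses the negative real axis with~$\Im w$ decreasing, and each such crossing is counterclockwise about the origin; therefore~$W = N$. Since~$W(\rho)$ is an integer and~$W(\rho) \to N$, we get~$W(\rho) = N$ for~$\rho$ near~$1$, so~$f$ has exactly~$N-1$ zeros (with multiplicity) in~$\{|z|<\rho\}$; as the count has stabilized, these are all the zeros of~$f$ in~$\D$. This gives~$\sum_{i=1}^K m_i = N-1$, and since~$N \ge 2$ the sum is positive, forcing~$K \ge 1$.

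The main obstacle is the single boundary point~$e_2$ (the image of~$\infty$), where~$g$ need not be~$C^1$, so that~$w_\rho \to w$ only locally uniformly on~$\S^1 \setminus \{e_2\}$ and one must rule out spurious winding of~$w_\rho$ near~$e_2$. Hopf's lemma still applies there as an interior-ball estimate and yields~$\Re w < 0$ strictly near~$e_2$ for~$\rho$ close to~$1$; combined with the strict monotonicity of~$g$ on the two arcs abutting~$e_2$, which fixes the sign of~$\partial_t\gamma(\rho e^{it})$ on those arcs once~$\rho$ is close to~$1$, this pins down the single crossing of the negative real axis corresponding to the minimum~$e_2$ and closes the argument. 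Alternatively, one may first replace~$\kappa$ by a small perturbation that is constant near~$\pm\infty$ and has the same extrema — so its harmonic extension is~$C^1$ up to and including~$e_2$ — prove the formula there, and pass to the limit, using that critical points cannot escape to the boundary. Apart from this one point, the proof is just the argument principle together with the sign pattern of the normal derivative forced by the extremal structure of the boundary data.
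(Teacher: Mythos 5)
The paper does not prove this statement: it is quoted verbatim from Alessandrini~\cite{A92} and followed only by a consistency check against the authors' own Theorem~\ref{thmdeg}, which requires the extra Morse-type hypothesis~\eqref{kmorse} and non-degeneracy of the critical points of~$\Gamma$, neither of which Alessandrini assumes. Your argument-principle proof is essentially Alessandrini's original route and is, in its core, correct: the identity $F'(\rho e^{it}) = \rho^{-1}e^{-it}\bigl(\rho\partial_\rho\gamma - i\partial_t\gamma\bigr)$, the reduction to the winding of $w_\rho$, the use of Hopf's lemma to get the sign of $\partial_\nu\gamma$ at the maxima and minima, and the fact that $g$ is strictly monotone on each of the~$2N$ arcs (so $\Im w$ keeps a fixed sign there and the argument of $w$ advances by exactly~$\pi$ on each arc) together give $W = N$ and hence $\sum m_i = N-1$. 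One stylistic remark: ``$2N$ crossings, each CCW, therefore $W=N$'' reads a little oddly; it is worth saying explicitly that the winding is the signed count of crossings of a \emph{single} ray (say, the positive real axis), of which there are exactly $N$, or equivalently that each of the $2N$ semi-arcs contributes $+\pi$ to the argument.

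Where the proof is not yet closed is exactly at~$e_2$, and I do not think your first fix is sufficient as stated. The issue is quantitative: the norm $\|\cdot\|_{C^2_{*,\beta}}$ only forces $|\kappa'(x)|\lesssim (1+|x|)^{-2-\beta}$ but leaves $\kappa''$ merely bounded; pulling back via $\Psi$, one gets $g\in C^1(\S^1)$ with $|g'(t)|\lesssim |t-t_0|^{\beta}$ near $t_0=\pi/2$, but $g''$ blows up like $|t-t_0|^{-4}$ there, so $g$ need \emph{not} be $C^{1,\alpha}$ at $e_2$ for any $\alpha>0$. Consequently $\partial_\rho\gamma$ has no a priori reason to converge uniformly to a continuous normal derivative in a neighbourhood of $e_2$, and Hopf's lemma at the single point $e_2$ (even in its interior-ball form) does not by itself yield $\Re w_\rho<0$ on a whole $t$-interval around $t_0$ for $\rho$ near $1$. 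To make the argument airtight one should take the other route you sketch --- approximate $\kappa$ by $\kappa_n$ with $\kappa_n'$ compactly supported and the same extremal structure, for which $g_n$ is smooth at $e_2$ and the crossing count is clean --- and then justify the passage to the limit. The missing step there is a \emph{uniform} compactness statement: critical points of $\Gamma_n$ stay in a fixed compact $K\subset\R^2_+$, which follows by combining Lemma~\ref{critinfinity} (applicable uniformly since the hypotheses~\eqref{kprimneg}--\eqref{kintpos} are inherited with uniform constants) with the observation that $\nabla\Gamma_n(\xi,0)=(\kappa_n'(\xi),-(-\Delta)^{1/2}\kappa_n(\xi))\ne 0$ uniformly on bounded sets, because $(-\Delta)^{1/2}\kappa_n$ is bounded away from zero at the global extrema (Remark~\ref{max-min}). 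Once that is in place the zero count is stable under the perturbation and the formula $\sum m_i=N-1$ transfers to $\kappa$ itself.
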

 
As already mentioned in Remark \ref{max-min}, if all maxima of $\kappa$ are global, then one gets $(-\Delta)^\frac12\kappa(\xi)>0$ at all such points $\xi$; similarly, if all minima of $\kappa$ are global, they satisfy $(-\Delta)^\frac12\kappa(\xi)<0$ at all such points $\xi$. Therefore, under the assumptions of Theorem \ref{thmdeg}, for $M^+_{\kappa}$ and $m^{+}_{\kappa}$ as in Theorem \ref{thmdeg}, one gets $M^+_\kappa=N$ and $m^+_\kappa=0$. Thus, Theorem \ref{thmdeg} shows that, for a sufficiently large $ R > 0$, $\deg(\nabla\Gamma,B_R \cap \R^2_{+},0)=1-N$.

As in the proof of Proposition \ref{propexactnum}, due the harmonicity of $\Gamma$ all its critical points are saddles. Hence, if all of them are non-degenerate, $\deg(\nabla\Gamma,B_R \cap \R^2_{+},0)=1-N$ equals minus the sum of the multiplicities of each critical point, namely we get formula \eqref{alessandrini}.

\bigbreak
We conclude the paper comparing our results with the ones in \cite{AL22}.

\medbreak
In the recent paper \cite{AL22}, Ahrend \& Lenzmann consider problem \eqref{eqk}, giving some results on the existence and uniqueness of solutions, as well as proving some qualitative properties of those solutions. In all of their main results they assume to prescribe a function $K(x)$ which is decreasing for positive $x$ and increasing for negative $x$, namely it satisfies $xK'(x)\le0$. Hence, the cases they cover are disjoint from the ones we consider in Theorem \ref{main2}. Actually, under the assumptions of \cite{AL22}, $K(x)$ has a unique global maximum in $x=0$ and so does $\kappa(x)=\epsilon^{-1}(K(x)-1)$. Thus, we have~$M^+_\kappa=1$ and~$m^+_\kappa=0$, and Theorem~\ref{thmdeg} gives no results.

In the case where $\pm \kappa$ is non-decreasing for negative $x$ and non-increasing for positive $x$, not only Theorem~\ref{thmdeg} does not provide solutions to \eqref{maineq}, but we get the following non-existence result.

\begin{proposition} Assume that $\kappa \in C^{1,\alpha}(\R) \cap L^{\infty}(\R)$ for some $\alpha > 0$ satisfies $|\kappa'(x)| \leq C (1+|x|)^{-\beta}$ for some constants $C > 0$ and $\beta > 0$.  If \eqref{maineq} has a solution $u$ in the form \eqref{u=U+phi} for some $\phi\in L^\infty(\R)$, 
$$
\int_\R x\kappa'(x)e^{u(x)}\,dx=0.
$$ 
In particular, $x\kappa'(x)$ must change sign.
\end{proposition}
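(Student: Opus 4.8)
\emph{Plan.} This is a Kazdan--Warner type obstruction attached to the dilation conformal Killing field, and the natural way to reach it is through the harmonic extension. Let $U$ be the bounded harmonic extension of $u=\U_{\mu,\xi}+\phi$ to $\R^2_+=\{(x,y):y>0\}$; it solves $\Delta U=0$ in $\R^2_+$ with Neumann datum $-\partial_y U(\cdot,0)=(-\Delta)^{\frac12}u=(1+\varepsilon\kappa)e^{u}$ on $\{y=0\}$. On the half-ball $B_R^+:=B_R\cap\R^2_+$ I would invoke the Rellich--Pohozaev identity with the planar dilation field $X(x,y):=(x,y)$. Since $X$ is a conformal Killing field of $\R^2$ with $\dive X\equiv2$ and we are in dimension two, the interior part of that identity vanishes identically, leaving $\int_{\partial B_R^+}\big(\tfrac12|\nabla U|^2(X\cdot\nu)-(X\cdot\nabla U)\,\partial_\nu U\big)\,ds=0$. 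On the flat part $\{(x,0):|x|<R\}$ one has $X\cdot\nu=0$, $\partial_\nu U=(1+\varepsilon\kappa)e^{u}$ and $X\cdot\nabla U=xu'$, so that piece equals $-\int_{-R}^{R}(1+\varepsilon\kappa)e^{u}\,xu'\,dx$; writing $\mathcal{I}_R$ for the integral over the half-circle $\partial B_R\cap\R^2_+$, one gets $\int_{-R}^{R}(1+\varepsilon\kappa)e^{u}\,xu'\,dx=\mathcal{I}_R$.

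\emph{The term at infinity.} The heart of the argument is to let $R\to+\infty$ and show $\mathcal{I}_R\to-\int_\R(1+\varepsilon\kappa)e^{u}\,dx$. Write $U=V+W$, where $V(x,y)=\log(2\mu)-\log\big((x-\xi)^2+(y+\mu)^2\big)$ is the explicit harmonic extension of $\U_{\mu,\xi}$ and $W$ the bounded harmonic extension of $\phi$. From \eqref{maineq} and \eqref{eqforU} one has $(-\Delta)^{\frac12}\phi=e^{\U_{\mu,\xi}}(e^{\phi}-1)+\varepsilon\kappa e^{u}$, which is $O\big((1+|x|)^{-2}\big)$ because $e^{\U_{\mu,\xi}}\le C(1+|x|)^{-2}$ and $\phi,\kappa\in L^\infty$. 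Boundedness of $\phi$ then forces $\int_\R(-\Delta)^{\frac12}\phi\,dx=0$, and the representation $\phi=c-\tfrac1\pi\log|\cdot|\ast(-\Delta)^{\frac12}\phi$ shows that $\phi(x)$ converges to a constant at $\pm\infty$ at a polynomial rate, and (via the Hilbert transform) that $\phi'(x)=O(|x|^{-1})$; elliptic estimates then give $|\nabla W(x,y)|=o\big((x^2+y^2)^{-1/2}\big)$ on large half-circles. Consequently only $V$ contributes to the limit, and inserting the asymptotics $V(z)=-2\log|z|+O(1)$, $\nabla V(z)=-2z|z|^{-2}+O(|z|^{-2})$ into $\mathcal{I}_R$ gives, by a direct computation, $\mathcal{I}_R\to-2\pi$. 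On the other hand $\int_\R(1+\varepsilon\kappa)e^{u}\,dx=\int_\R(-\Delta)^{\frac12}u\,dx=\int_\R(-\Delta)^{\frac12}\U_{\mu,\xi}\,dx+\int_\R(-\Delta)^{\frac12}\phi\,dx=2\pi+0$, so indeed $\mathcal{I}_R\to-\int_\R(1+\varepsilon\kappa)e^{u}\,dx$.

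\emph{Conclusion.} Passing to the limit yields $\int_\R(1+\varepsilon\kappa)e^{u}\,(xu'+1)\,dx=0$; here the decay of $\phi'$ makes $xu'+1$ bounded, so the integrand is dominated by $Ce^{u}\in L^{1}(\R)$. Since $e^{u}(xu'+1)=(xe^{u})'$ and $xe^{u}\to0$ at $\pm\infty$, we have $\int_\R e^{u}(xu'+1)\,dx=0$, whence $\int_\R\kappa\,(xe^{u})'\,dx=0$. Integrating by parts on $[-R,R]$ and letting $R\to\infty$ (the boundary term $\kappa\,xe^{u}$ vanishing because $\kappa\in L^{\infty}$ and $xe^{u}\to0$) turns this into $-\int_\R x\kappa'(x)e^{u(x)}\,dx=0$; the hypothesis $|\kappa'(x)|\le C(1+|x|)^{-\beta}$ with $\beta>0$ is precisely what makes $\int_\R|x\kappa'|e^{u}\le C\int_\R(1+|x|)^{-1-\beta}\,dx<\infty$, so every step is legitimate and we conclude $\int_\R x\kappa'(x)e^{u(x)}\,dx=0$. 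Finally, as $e^{u}>0$ on $\R$, this equality forces $x\kappa'(x)$ to change sign (it cannot be $\ge0$, or $\le0$, a.e.\ on $\R$ unless $\kappa$ is constant).

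\emph{Main obstacle.} The delicate point is the analysis of $\mathcal{I}_R$ at infinity --- the decay estimates for $\phi$, $\phi'$ and $\nabla W$, which rest on potential/Schauder estimates for $(-\Delta)^{\frac12}$ together with $e^{u}\le Ce^{\U_{\mu,\xi}}$ and must be stated with care. An equivalent argument that bypasses $\mathcal{I}_R$ altogether is to push everything forward by the conformal map $\Psi:\R^2_+\to\D$ of \eqref{psi}: the dilation field $X$ becomes a conformal Killing field of $\D$ \emph{tangent} to $\S^1$, so on the bounded domain $\D$ one has $X\cdot\nu\equiv0$ on all of $\partial\D$, the interior part of the Rellich identity still cancels because $\D\subset\R^2$, and the identity reads off directly as an integral over $\S^1$ of the boundary curvature against a tangential derivative of the boundary potential, which transports back to $\int_\R x\kappa'e^{u}\,dx=0$ after undoing the conformal factor (the "$+1$" in $xu'+1$ arising from the conformal weight, i.e.\ from $\dive(x\partial_x)=1$).
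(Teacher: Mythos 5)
Your proof is correct in outline but takes a genuinely different route from the paper. The paper's argument is essentially a two-line citation: it invokes \cite[Lemmas 2.3 and 2.4]{AL22} for the asymptotics $u(x) = -\tfrac{\Lambda}{\pi}\log|x| + O(1)$ and for the Pohozaev-type identity $\tfrac{\Lambda}{2\pi}(\Lambda - 2\pi) = \epsilon\int_\R x\kappa' e^u\,dx$ with $\Lambda := \int_\R (1+\epsilon\kappa)e^u\,dx$, then observes that boundedness of $\phi$ forces $\Lambda = 2\pi$, which kills the left-hand side. You instead re-derive that Pohozaev/Kazdan--Warner identity from first principles, passing to the bounded harmonic extension $U$ on $\R^2_+$ and applying the Rellich identity with the dilation field on $B_R^+$; the interior term vanishes because $n = 2$, and the flat-boundary term reproduces $\int (1+\varepsilon\kappa)e^u\,xu'\,dx$. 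The price is the term at infinity $\mathcal{I}_R$, which you correctly identify as the delicate point. Your computation of the leading contribution from the explicit Poisson kernel $V$ (giving $-2\pi$, matching $-\Lambda^2/2\pi$ with $\Lambda = 2\pi$) is right, and your derivation of $\Lambda = 2\pi$ from $\int_\R(-\Delta)^{1/2}\phi\,dx = 0$ is an equivalent way of using boundedness of $\phi$, consistent with the paper's use of the logarithmic asymptotics.

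One remark on the part you flagged: the pointwise decay $|\nabla W| = o(|z|^{-1})$ on large half-circles, which you deduce from $\phi'(x) = O(|x|^{-1})$, is not quite free from Schauder estimates alone near the boundary, and you can sidestep it. Decomposing $\mathcal{I}_R$ into $VV$, $VW$, $WW$ parts, the $VW$ cross terms reduce (using that $\nabla V$ is radial to leading order) to a multiple of $\int_{\partial B_R\cap\R^2_+}\partial_\nu W\,ds$, which equals $-\int_{-R}^{R}(-\Delta)^{1/2}\phi\,dx \to 0$ by the divergence theorem and your observation that $(-\Delta)^{1/2}\phi$ has zero mean; this needs no pointwise bound on $\nabla W$. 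The $WW$ term is where an honest decay estimate (or, as you suggest at the end, transporting everything to $\D$ via $\Psi$ so that the boundary term at infinity collapses to the behaviour at the single point $e_2$) is genuinely needed. So: same identity, two different proofs of it; the paper's is shorter because it outsources the identity to \cite{AL22}, yours is self-contained but requires the decay analysis of $W$ to be written out carefully.
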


\begin{proof}
From \cite[Lemmas 2.3 and 2.4]{AL22}, we get that any solution to \eqref{maineq} satisfy
\begin{equation*}
u(x)=-\frac\Lambda\pi\log|x|+O(1), \quad \textup{ as } |x| \to \infty,
\end{equation*}
and
\begin{equation} \label{ahrendLenzmann}
\frac\Lambda{2\pi}(\Lambda-2\pi)=\int_\R x(1+\epsilon\kappa(x))'e^{u(x)}\,dx=\epsilon\int_\R x\kappa'(x)e^{u(x)}\,dx,
\end{equation}
where 
$$
\Lambda:=\int_\R (1+\epsilon \kappa(x)) e^{u(x)}\,dx.
$$
Since $\phi$ is bounded, the (logarithmic) asymptotic behaviour of $u$ is the same than the one of $\mathcal U_{\mu,\xi}$ and thus, necessarily, $\Lambda=2\pi$. Hence, the result immediately follows from \eqref{ahrendLenzmann}. 
\end{proof}

\appendix

\section{An alternative proof of point~\ref{a} in Lemma~\ref{denslem}} \label{tranapp}

\noindent
We include here a different proof of point~\ref{a} of Lemma~\ref{denslem}, based on the generalization of an abstract transversality result due to Saut \& Temam. We start by recalling its statement, following Henry~\cite{H05}.

\begin{theorem}[{\!\!\cite[Theorem~5.4]{H05}}] \label{tran}
Let~$X, Y, Z$ be Banach spaces and~$U \subset X$,~$V \subset Y$ be open subsets. Let~$F: U \times V \to Z$ be a~$C^k$ map, for some integer~$k \ge 1$, and~$z_0 \in Z$. For all~$(x_0, y_0) \in F^{-1}(\{z_0\})$, assume that:
\begin{enumerate}[label=$(\roman*)$]
\item \label{i} The partial derivative~$\partial_x F(x_0, y_0): X \to Z$ is a Fredholm operator of index~$l < \alpha$;
\item \label{ii} The total derivative~$D F(x_0, y_0) = \langle \left( \partial_x F(x_0, y_0), \partial_y F(x_0, y_0) \right), \cdot \rangle : X \times Y \to Z$ is onto.
\end{enumerate}
Furthermore, suppose that:
\begin{enumerate}[label=$(\roman*)$]
\setcounter{enumi}{2}
\item \label{iii} The projection map~$F^{-1}(\{z_0\}) \to Y$ is~$\sigma$-proper, in the sense that~$F^{-1}(\{z_0\}) = \bigcup_{j \in \N} M_j$ with~$M_j \subset U \times V$ such that the projection~$M_j \ni (x, y) \mapsto y \in Y$ is proper for every~$j \in \N$.
\end{enumerate}
Then, the set~$\big\{ y \in V : z_0 \mbox{ is a critical value of } F(\cdot, y) \big\}$ is meager in~$Y$.
\end{theorem}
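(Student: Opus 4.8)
The plan is to run the classical Sard--Smale transversality scheme, with hypothesis~\ref{iii} substituting for the separability of the ambient spaces usually assumed in that circle of ideas. Throughout I may assume that~$M := F^{-1}(\{z_0\})$ is nonempty, since otherwise the statement is vacuous. First I would show that~$M$ is a~$C^k$ Banach submanifold of~$U \times V$ and reduce the problem to the study of a projection. By~\ref{i}, at each~$(x_0,y_0) \in M$ the operator~$\partial_x F(x_0,y_0)$ is Fredholm, hence has a finite-dimensional kernel and a closed range of finite codimension; combined with the surjectivity of~$DF(x_0,y_0)$ from~\ref{ii}, a standard application of the implicit function theorem shows that~$\ker DF(x_0,y_0)$ is complemented and~$DF(x_0,y_0)$ is onto, so that~$z_0$ is a regular value of~$F$ and~$M$ is a~$C^k$ submanifold with~$T_{(x_0,y_0)}M = \ker DF(x_0,y_0)$. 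Denoting by~$\pi \colon M \to Y$ the restriction of the canonical projection~$(x,y) \mapsto y$, I would then be left to understand the critical values of the~$C^k$ map~$\pi$.

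The key algebraic step is to match the critical points of~$\pi$ with the bad parameters. Writing~$A := \partial_x F(x_0,y_0)$ and~$B := \partial_y F(x_0,y_0)$, the differential~$d\pi(x_0,y_0)$ is the map~$\ker(A \oplus B) \ni (h,k) \mapsto k \in Y$, where~$(A \oplus B)(h,k) := Ah + Bk$. A short diagram chase gives~$\ker d\pi(x_0,y_0) \cong \ker A$, and the surjectivity of~$A \oplus B$ makes the induced map~$Y \to Z/\mathrm{range}\,A$ onto, producing an isomorphism~$Y/\mathrm{range}\,d\pi(x_0,y_0) \cong Z/\mathrm{range}\,A$. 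Hence~$d\pi(x_0,y_0)$ is Fredholm of index~$l$, and it is surjective precisely when~$A$ is surjective. Consequently~$\pi$ is a~$C^k$ Fredholm map of index~$l$, and
$$
\big\{ y \in V : z_0 \textup{ is a critical value of } F(\cdot,y) \big\} \;=\; \big\{ \textup{critical values of } \pi \big\},
$$
so it suffices to prove that the right-hand set is meager in~$Y$.

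Finally I would invoke a Sard--Smale argument adapted to the~$\sigma$-proper setting. Let~$\Sigma := \{ p \in M : d\pi(p) \textup{ is not onto}\}$ be the critical set of~$\pi$; since surjectivity is an open condition among bounded operators and~$\pi$ is~$C^1$, $\Sigma$ is closed in~$M$. Writing~$M = \bigcup_{j \in \N} M_j$ as in~\ref{iii}, each~$\pi(\Sigma \cap M_j)$ is closed in~$Y$, being the image of a relatively closed set under the proper map~$\pi|_{M_j}$. On the other hand, near any point of~$\Sigma$ a Lyapunov--Schmidt reduction turns~$\pi$ into a~$C^k$ map between open subsets of finite-dimensional spaces whose dimensions differ by exactly~$l$; since~$l < k$ by~\ref{i}, the classical Morse--Sard theorem shows that the critical values of this local model form a set with empty interior. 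Thus each~$\pi(\Sigma \cap M_j)$ is closed with empty interior, hence nowhere dense, and the countable union~$\bigcup_{j} \pi(\Sigma \cap M_j)$ — which is exactly the set of critical values of~$\pi$ — is meager in~$Y$.

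The step I expect to be the main obstacle is this last one: reconciling the purely local, finite-dimensional Morse--Sard conclusion (\emph{empty interior}) with a genuinely global statement (\emph{meager}) in the absence of any separability hypothesis. This is precisely where~$\sigma$-properness is indispensable, as it upgrades ``empty interior'' to ``closed with empty interior'', i.e.\ nowhere dense, so that the countable union stays meager. A secondary technical point, requiring some care in the bookkeeping, is to verify that the Lyapunov--Schmidt reduction can be carried out preserving the~$C^k$ regularity of~$\pi$ and that the finite-dimensional dimension gap equals~$l$ throughout, so that the hypothesis~$l < k$ is exactly what the classical Sard theorem needs.
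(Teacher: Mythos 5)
First, a remark on context: the paper does not prove this statement at all --- it is quoted verbatim (as Theorem~\ref{tran}) from Henry's book \cite{H05} and used as a black box in Appendix~\ref{tranapp}. So there is no in-paper proof to compare against; what follows measures your proposal against the standard Smale--Sard/Henry argument. Your first two steps are correct and are exactly the standard route: hypothesis~\ref{ii} together with the Fredholm property of $\partial_x F$ makes $z_0$ a regular value with complemented kernel, so $M=F^{-1}(\{z_0\})$ is a $C^k$ Banach manifold; the diagram chase identifying $\ker d\pi\cong\ker\partial_xF$ and $Y/\mathrm{range}\,d\pi\cong Z/\mathrm{range}\,\partial_xF$ is right, as is the identification of the critical values of $\pi$ with the bad parameters $y$. (The ``$l<\alpha$'' in the statement is a typo for $l<k$, and you read it correctly.)

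The genuine gap is in the last step, and you have mislocated where $\sigma$-properness does its work. From the local Lyapunov--Schmidt model you obtain: for every $p\in\Sigma$ there is a neighbourhood $N_p$ such that $\pi(\Sigma\cap N_p)$ has empty interior. You then write ``Thus each $\pi(\Sigma\cap M_j)$ is closed with empty interior.'' Closedness does follow from properness of $\pi|_{M_j}$, but the empty-interior claim does \emph{not} follow from the local statement: absent separability, $\Sigma\cap M_j$ need not be covered by countably many of the $N_p$'s, and even a finite union of sets with empty interior can have nonempty interior unless those sets are closed. The correct repair uses properness a second time: if $\pi(\Sigma\cap M_j)$ contained an open set, pick a closed ball $\overline{B}$ inside it; then $K:=(\pi|_{M_j})^{-1}(\overline{B})\cap\Sigma$ is compact ($\Sigma$ is closed in $M$ and $\pi|_{M_j}$ is proper), so $K$ is covered by finitely many $N_{p_1},\dots,N_{p_r}$, whence $\overline{B}\subset\bigcup_{i=1}^r\pi(\Sigma\cap M_j\cap \overline{N_{p_i}})$ with each term closed (properness again) and of empty interior (local Sard); Baire's theorem applied to the complete metric space $\overline{B}$ then gives a contradiction. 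Your closing paragraph asserts instead that $\sigma$-properness merely ``upgrades empty interior to closed with empty interior,'' which addresses only the closedness of $\pi(\Sigma\cap M_j)$ and leaves the passage from the local to the global empty-interior statement unproved. A secondary imprecision: the Lyapunov--Schmidt reduction does not turn $\pi$ into a map between finite-dimensional open sets; it yields a local form $(\xi,v)\mapsto(\xi,\eta(\xi,v))$ with $\xi$ in an infinite-dimensional complement, and one applies the classical Sard theorem to the finite-dimensional slices $\eta(\xi,\cdot):\R^n\to\R^m$ ($n-m=l<k$) for each fixed $\xi$ before concluding empty interior of the local critical-value set.
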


\begin{remark} We recall that a map is called \emph{proper} if preimages of compact subsets are compact, whereas a subset of a topological space~$Y$ is called \emph{meager} if it is the countable union of sets whose closures have empty interiors.
\end{remark}


Thanks to this theorem, we can move on to give a second proof of our genericity result.


\begin{proof}[Proof of Lemma~\ref{denslem} \ref{a}]
We shall apply Theorem~\ref{tran} to the~$C^1$ map~$F: U \times V \to Z$ defined by
$$
F(\xi,\mu,\kappa):=\nabla \Gamma(\xi,\mu)= \frac{1}{\pi} \left( \int_{\R} \frac{\kappa'(\xi+\mu y)}{1+y^2} \, dy, \int_{\R} \frac{\kappa'(\xi+\mu y)y}{1+y^2} \, dy\right),
$$
with~$X = Z = \R^2$,~$U = \R^2_+$,~$Y = V = C^2_{*, \beta}(\R)$, and~$z_0 = 0$. In order to do so, we need to check that assumptions~\ref{i},~\ref{ii}, and~\ref{iii} are verified.

Since~$X = Z$ is a finite dimensional space,~$\partial_x F$ is clearly a Fredholm operator of index~$0$. Hence,~\ref{i} holds true. Property~\ref{iii} is also satisfied, as can be checked by taking
$$
M_j := F^{-1}(\{ 0 \}) \cap \left( \left\{ (\xi, \mu) \in \overline{B_j} : \mu \ge \frac{1}{j} \right\} \times C_{*,\beta}^2(\R) \right),
$$
for every~$j \in \N$. It thus only remains to verify~\ref{ii}, i.e.,~that the linear map 
$$
(\xi,\mu,\kappa)\mapsto \partial_{(\xi,\mu)}F(\xi_0,\mu_0,\kappa_0)[\xi,\mu]+ \partial_{\kappa}F(\xi_0,\mu_0,\kappa_0)[\kappa],\qquad 
(\xi,\mu,\kappa)\in \mathbb R^2 \times C^{2}_{*,\beta}(\mathbb R),
$$
is onto whenever~$F(\xi_0,\mu_0,\kappa_0)=0$. Note that it is enough to prove that the linear map
$$
\kappa \mapsto \partial_{\kappa} F(\xi_0,\mu_0,\kappa_0)[\kappa] \in\mathbb R^2,
$$
is onto. In particular, as~$F$ is linear in~$\kappa$, the proof is reduced to finding two functions~$\kappa_1,\kappa_2\in C^{2}_{*,\beta}(\mathbb R)$ such that
\begin{equation}\label{trans1}
\partial_{\kappa}F(\xi_0,\mu_0,\kappa_0)[\kappa_1]=(1,0)\quad \hbox{ and } \quad  \partial_{\kappa}F(\xi_0,\mu_0,\kappa_0)[\kappa_2]=(0,1).
\end{equation}
For~$a_1, a_2 > 0$, we set
$$
h_1(y):= \frac{a_1}{1+y^6} \quad \hbox{and} \quad h_2(y):= \frac{a_2 \, y}{1+y^6}.
$$
Clearly, we can adjust~$a_1$ and~$a_2$ in a way that
$$
\int_{\mathbb R} \frac{h_1(y)}{1+y^2} \, dy = \pi,\quad \int_{\mathbb R} \frac{h_1(y)y}{1+y^2} \, dy=0, \quad  \int_{\mathbb R} \frac{h_2(y)}{1+y^2} \, dy=0,\quad \hbox{and}\quad \int_{\mathbb R} \frac{h_2(y)y}{1+y^2} \, dy = \pi.
$$
Then, if~$k_i$ denotes any primitive of~$h_i$, the functions~$\kappa_i(x):=\mu_0 k_i \! \left( \frac{x-\xi_0}{\mu_0}\right)$ belong to the space~$C^{2}_{*,\beta}(\R)$ and satisfy~\eqref{trans1}.

Thanks to Theorem~\ref{tran}, we infer that the complement of the set~$\mathscr{A}$ is meager in~$C^2_{*, \beta}(\R)$. This being a Banach space, we conclude that~$\mathscr{A}$ is dense in~$C^2_{*, \beta}(\R)$---the complement of a meager set in a complete metric space is dense, as a consequence of Baire's category theorem.
\end{proof}

\bibliography{biblio}
\bibliographystyle{abbrv}

\end{document}